\documentclass[12pt,a4paper,reqno]{amsart}
    \usepackage{enumitem}
    \usepackage{amsmath}    
    \usepackage{amssymb}
    \usepackage{tikz}
    \usepackage{tikz-cd}
    \usepackage{graphicx} 
    \usepackage{subcaption}
    \usepackage[T1, T2A]{fontenc}
    \usepackage[greek, russian, english]{babel}
    \usepackage[colorlinks=true, linkcolor=blue]{hyperref}
    \usepackage{orcidlink}
    \usepackage{enumitem}
    \title[Discrete Hardy spaces on the half-lattice]{Hardy spaces of discrete holomorphic functions \\on the upper half-lattice}
    \usepackage{xcolor}
    \date{}
    \usepackage{amsfonts, amsthm, mathtools}
    \usepackage[a4paper,left=2cm,right=2cm]{geometry}

    \DeclareMathOperator*{\supp}{supp}

    \newtheorem{thm}{Theorem}[section]
    \newtheorem{lem}[thm]{Lemma}
    \newtheorem{prop}[thm]{Proposition}
    \newtheorem{cor}[thm]{Corollary}
    
    \theoremstyle{definition}
    \newtheorem{rmk}[thm]{Remark}

    \theoremstyle{definition}
    \newtheorem*{thm*}{Theorem}

    \usepackage{tcolorbox}
    \newcommand{\nb}[3]{{\colorbox{#2}{\bfseries\sffamily\scriptsize\textcolor{white}{#1}}} 
    {\textcolor{#2}{\sf\small\textit{#3}}}}
            	\definecolor{jazzberryjam}{rgb}{0.65, 0.04, 0.37}
    \newcommand{\eug}[1]{\nb{Eug}{jazzberryjam}{#1}}
    \newcommand{\ale}[1]{\nb{Ale}{teal}{#1}}

    \allowdisplaybreaks
    \author[E. Dellepiane]{Eugenio Dellepiane \orcidlink{0009-0005-3432-2053}}
    \address{D\'epartement de math\'ematiques et de statistique,
    Universit\'e Laval, Qu\'ebec, QC, Canada G1V 0A4}
    \email{dellepianeeugenio@gmail.com}
    \author[A. Monguzzi]{Alessandro Monguzzi \orcidlink{0000-0003-3233-5000}} 
    \address{Dipartimento di Ingegneria Gestionale, dell'Informazione e della
    Produzione, Universit\`a degli Studi di Bergamo, viale Marconi 5,
    24044 Dalmine, Italy}
    \email{alessandro.monguzzi@unibg.it}
    \author[M. Monti]{Matteo  Monti\orcidlink{0000-0001-6848-5938}}
    \address{Dipartimento di Matematica e Applicazioni, Universit\`a degli Studi di Milano--Bicocca,
    Via Cozzi 55, 20125 Milano, Italy}
    \email{matteo.monti@unimib.it}
    \subjclass[2020]{Primary 30G25, 30H10; Secondary 39A12, 31C20}
    \thanks{All the authors are members of Indam--Gnampa. E. Dellepiane acknowledges the support of the CRM-Laval post-doctoral fellowship and the Alliance Grant. M. Monti is partially supported by the GNAMPA–INdAM Project 2026 "Transferring Harmonic Analysis between Discrete Structures and Manifolds" (CUP E53C25002010001).} 
     \keywords{Discrete holomorphic functions, Hardy spaces, Paley--Wiener theorem, Bergman spaces}
    
\begin{document}
    \begin{abstract}
    We develop a theory of Hardy spaces $H^p$ of discrete holomorphic functions on
    the upper half-lattice, within the classical framework of discrete
    holomorphicity on the square lattice. We prove Cauchy and Poisson reproducing
    formulas, establish a boundary norm identity, and obtain Paley--Wiener type
    characterizations for these spaces. In the Hilbert space case, we describe the
    associated reproducing kernel and Szeg\H{o} projection, and we compare the
    discrete theory with the classical Hardy space on the upper half-plane through
    a family of discrete holomorphic approximants of classical $H^2$-functions. We
    also prove duality results for $H^p$, $1<p<\infty$, establish uniqueness and
    sampling results on horizontal lines, and introduce Bergman-type spaces,
    comparing two natural weighted scales.
    \end{abstract}
    \maketitle

    

    \section{Introduction and main results}
    
    Discrete complex analysis has a long history. Early forms of discrete
    holomorphicity appeared in the work of Isaacs \cite{Isa41,Isa52} and Ferrand
    \cite{F}, and were later developed systematically by Duffin and
    collaborators in the setting of the square lattice and of rhombic lattices
    \cite{D,Duf68,DP68}. In this framework, discrete holomorphic functions
    satisfy finite-difference analogues of the Cauchy--Riemann equations and retain
    several features of classical holomorphic functions, such as Cauchy-type
    formulas and close connections with harmonic functions and potential theory.
    After these early developments, the subject remained relatively quiet for some
    time, but it has attracted renewed interest in the last decades. Discrete
    complex analytic methods now play an important role in probability, statistical
    mechanics, and the analysis of critical lattice models, especially through the
    work of Kenyon \cite{Ken02}, Smirnov, Chelkak, and collaborators
    \cite{CS11,CS12,Che16,CLR23}. Related developments include discrete Riemann
    surfaces and discrete complex structures on graphs and surfaces
    \cite{Mer01,Mer08,BG16,Sko13}. For general introductions to discrete complex
    analysis we refer the reader to the expository works \cite{Lov04,Smi10} and to
    the references therein.
    
    In contrast with these developments, the function space theory of discrete
    holomorphic functions is still much less developed. One basic obstruction is
    that the pointwise product of two discrete holomorphic functions is not, in
    general, discrete holomorphic. Thus, several tools from the classical theory of holomorphic function spaces, such as factorization or Blaschke products, do not have
    immediate discrete counterparts. This suggests that a theory of holomorphic function spaces in the discrete setting cannot be obtained by a direct
    translation of the continuous one, but has to be built from the specific
    analytic and geometric structure of the lattice. 
    
    Some foundational results in this direction are already present in Duffin's
    work. In \cite{D}, Duffin introduced discrete contour integrals, proved
    Cauchy-type formulas, and studied the fundamental kernel associated with the
    square lattice. Later, Zeilberger, Dym, and collaborators developed Hilbert
    space methods for discrete analytic functions and obtained some Paley--Wiener
    type results \cite{ZD,Zei77a,Zei77b,Zei77c}. Other related contributions are
    scattered in the literature, for instance \cite{Bol68,DM71,Mug80}. More
    recently, a systematic study of Paley--Wiener spaces of discrete holomorphic
    functions on the square lattice was initiated in \cite{MM}. However, apart from
    these contributions, the function space theory of discrete holomorphic
    functions remains largely undeveloped. In particular, a systematic Hardy space
    theory for discrete holomorphic functions on the half-lattice does not seem to
    have been developed.
    The main purpose of the present paper is to provide such a framework and to offer a starting point for a broader function space theory of discrete holomorphic functions.
    
    Throughout the paper we identify the lattice $\mathbb Z^2$ with $\mathbb Z+i\mathbb Z\subseteq\mathbb C$, and freely use the notation $m+in$ for the lattice point $(m,n)$. We focus on the discrete upper half-lattice
    \[
    \mathbb Z^2_+
    =
    \{m+in\in\mathbb Z^2:\ n\geqslant0\},
    \]
    and study Hardy spaces of discrete holomorphic functions on $\mathbb Z^2_+$.

    For $1\leqslant p\leqslant\infty$ we define
    \[
    H^p(\mathbb{Z}^2_+)
    =
    \left\{
    f\in \operatorname{Hol}(\mathbb Z^2_+):
    \sup_{n\geqslant0}\|f(\cdot+in)\|_{\ell^p(\mathbb Z)}<\infty
    \right\}.
    \]
    
    The Hilbert space case $p=2$ was already considered by Zeilberger and 
    Dym~\cite{ZD}. Their approach is essentially spectral and is tied to the Hilbert space setting: the space $H^2$ is 
    identified, through the Fourier transform, with a subspace of $L^2(\mathbb T)$, 
    and its 
    main properties are then deduced from this model. Our point of view is different. Starting from the intrinsic discrete analytic
    structure, and in particular from Duffin's Cauchy-type formula, we derive the
    basic reproducing formulas directly in the Banach setting
    $1\leqslant p<\infty$. The Paley--Wiener characterization is then recovered 
    as a consequence of the Cauchy reproducing formula.
    
    A key step in this construction is to show that the $H^p$-norm is determined by the boundary values. More precisely, we prove the norm identity
    \begin{equation}\label{eq:norm_boundary_identity}
    \|f\|_{H^p}
    =
    \|f|_{\mathbb Z}\|_{\ell^p(\mathbb Z)},
    \qquad 1\leqslant p<\infty,
    \end{equation}
    see Proposition~\ref{p:H^p_norm_2}. This identity allows us to transfer questions about $H^p$-functions to questions
    about their boundary restrictions in $\ell^p(\mathbb Z)$.   
    It is worth mentioning that an alternative notion of Hardy space for discrete analytic functions was introduced by Alpay and Volok in \cite{AV}. Their construction focuses on the Hilbert space setting and is based on expansions with respect to discrete analytic polynomials, modeling the classical Hardy space on the unit disk. By contrast, the Hardy spaces considered here are modeled on the classical Hardy spaces on the upper half-plane and are characterized by boundary values and discrete Fourier analysis.

    The proof of \eqref{eq:norm_boundary_identity} is not trivial. Starting from Duffin's discrete Cauchy-type formula, we derive a Cauchy reproducing
    formula in the half-lattice for $H^p$ functions; see Proposition~\ref{p:cauchy_half_plane}. We then
    identify the real part of this kernel with a Poisson-type kernel for the discrete
    upper half-lattice and obtain a Poisson-type reproducing formula; see
    Proposition~\ref{p:reproducing_poisson}. Together with the positivity and normalization of the Poisson kernel, this yields the boundary norm identity. 
    For the corresponding boundary subspaces we obtain Paley--Wiener-type characterizations; see
    Theorem~\ref{t:H^2_PW} for the Hilbert space case and
    Theorem~\ref{t:H^p_PW} for the general $H^p$ case. Again, in the Hilbert space case,
    this result was already proved in \cite{ZD}, but from a different perspective; see the remarks after Theorem~\ref{t:H^2_PW}. We also describe the
    corresponding Szeg\H{o} projection. On the boundary, it is a convolution
    operator with Duffin's Cauchy kernel and can be written in terms of a discrete
    Hilbert-transform type operator; see \eqref{Eq:P_Hilbert}. Thus the classical
    relation between Hardy space boundary values, the Cauchy kernel, the
    Szeg\H{o} projection, and the Hilbert transform has a precise analogue in the
    present discrete setting. We also relate the discrete theory to the classical Hardy space on the upper half-plane. More precisely, we show that every classical $H^2$-function admits a natural family of discrete holomorphic $H^2$-approximants on rescaled half-lattices, and that these approximants converge uniformly on compact subsets away from the boundary; see Section \ref{s:approximation}. 
    
    In the last part of the paper we record some further consequences of the
    boundary structure and introduce Bergman-type spaces. We prove uniqueness and sampling
    results  showing in particular that the parity structure of the square lattice
    plays a visible role. The even and odd boundary sublattices are sampling sets
    for $H^p$, $1<p<\infty$, and in the Hilbert case the $H^2$-norm splits exactly
    between them. This boundary sampling phenomenon has no direct analogue in the
    classical Hardy space on the upper half-plane; see Propositions~\ref{p:sampling},
    \ref{p:sampling_Hp}, and \ref{p:not_sampling_positive_height}.

     We introduce Bergman-type spaces of
    discrete holomorphic functions. The unweighted Bergman space is naturally defined, and its Paley--Wiener representation and reproducing kernel follow directly from the Hardy space theory; see
    Theorem~\ref{t:B^2_PW} and Corollary~\ref{c:Bergman_kernel}. This suggests
    natural families of weighted Bergman spaces, and we compare two such weighted
    scales: one defined by powers of the diagonal Bergman kernel and one defined
    through the spectral weight appearing in the Paley--Wiener representation.
    These two scales identify the same spaces of functions, up to
    equivalence of norms.
    
    The paper is organized as follows. In Section \ref{s:preliminaries} we recall the necessary preliminaries on harmonic analysis and discrete holomorphicity. In Section \ref{s:Hp} we prove the Cauchy and Poisson reproducing formulas and, as a consequence, the norm identity \eqref{eq:norm_boundary_identity}. Section \ref{s:H2} is devoted to the Hilbert space structure of $H^2$, the Paley--Wiener theorem, the Szeg\H o projection, and related consequences. In Section \ref{s:approximation} we compare the discrete theory with the classical Hardy space on the upper half-plane by constructing discrete approximants of classical $H^2$-functions. In Section \ref{s:Hp_duality} we prove the Paley–Wiener characterization and duality results for the spaces $H^p$, $1<p<\infty$. Section \ref{s:sampling} contains preliminary results on uniqueness and sampling sets. Section \ref{s:Bergman} introduces Bergman-type spaces and weighted Bergman scales. We conclude in Section \ref{s:final} with some final remarks and possible directions for future research.

    \section{Preliminaries}\label{s:preliminaries}
    We recall in this section the preliminary material in harmonic analysis and discrete holomorphicity that will be used throughout the paper. The harmonic analysis part fixes our conventions for the Fourier transform and periodic distributions. The discrete holomorphic material is mostly classical, see \cite{D}, but less standard. Therefore we present it in some detail in order to keep the paper reasonably self-contained.
    \subsection{Fourier transform and distributions on the torus}\label{ss:distributions}
    Throughout the paper we  identify the standard torus $
    \mathbb T=\mathbb R/2\pi\mathbb Z$ 
    with the interval $[-\pi,\pi]$. We then consider on $[-\pi,\pi]$ the Lebesgue spaces $L^p$ with respect to the  unnormalized Lebesgue measure. In particular, $
    \left\{(2\pi)^{-1/2}e^{im(\cdot)}\right\}_{m\in\mathbb Z}
    $ 
    is an orthonormal basis of $L^2(\mathbb T)$.
    
    The Fourier transform is a fundamental tool in our analysis. We denote by $\mathcal F$ the mapping
    \begin{align*}
    \mathcal{F}\colon \ell^2(\mathbb{Z})&\to L^2([-\pi,\pi])\\
    f &\mapsto (\mathcal F f)(t)=\frac{1}{\sqrt{2\pi}}\sum_{m=-\infty}^{+\infty}f(m) e^{-imt},\quad t\in[-\pi,\pi],
    \end{align*}
    with inverse
    \[
    (\mathcal{F}^{-1}g)(m)=\frac{1}{\sqrt{2\pi}}\int_{-\pi}^{\pi}g(t)e^{imt}\operatorname{d}\!t, \qquad m\in\mathbb{Z}.
    \]
    We use this choice of signs and normalization throughout the paper; with it, $\mathcal F^{-1}=\mathcal F^*$. We will need to use the \emph{distributional} Fourier transform for $\ell^p$-functions. Thus, we gather some basic facts about the distributions over the torus. For references, we include \cite{beals,distributionsRT,hormander1983analysis}. By $\mathcal{P}:=C^\infty(\mathbb{T})$ we denote the set of smooth functions on $\mathbb{R}$ that are $2\pi$-periodic. We consider the standard topology on $C^\infty$, that is, $\psi_n\to\psi$ in $C^\infty(\mathbb{T})$ if and only if $\psi_n^{(k)}$, the derivative of order $k$ of $\psi_n$, converges uniformly to $\psi^{(k)}$ as $n\to\infty$, for every order $k\geqslant0.$ This space can be characterized in terms of the Fourier series~\(\mathcal{F}^{-1}.\)
    We denote by $\mathcal{S}(\mathbb{Z})$ the space of \emph{rapidly decaying}  functions, that is,
    \[
    \mathcal S(\mathbb Z)=\Big\{\varphi:\mathbb Z\to \mathbb C\textrm{ s.t. }\forall M\in\mathbb N,\,  p_M(\varphi)=\sup_{k\in\mathbb{Z}}(|k|+1)^{M}|\varphi(k)|<\infty \Big\}.
    \]
    Then, $\mathcal{S}(\mathbb{Z})$ is a Fréchet space, with respect to the topology induced by the family of norms $\{p_M\}_{M\in\mathbb{N}}$. 
     The set of the (tempered) distributions $\mathcal{S}'(\mathbb{Z})$ is defined as the dual of $\mathcal{S}(\mathbb{Z})$. By testing on the functions $\operatorname{e}_n$ given by $\operatorname{e}_n(k)=\delta_{n,k}$ and setting $u(n):=u(\operatorname{e}_n)$, one can check that every $u\in\mathcal{S}'(\mathbb{Z})$ is a pointwise well-defined function on $\mathbb{Z}$ with \emph{slow growth}, that is,
     \[
     |u(n)|\leqslant C(|n|+1)^r,
     \]
     for some positive $C,r$, and the action of $u$ on $\mathcal{S}(\mathbb{Z})$ is given by
    \[
    \varphi \mapsto u(\varphi)=\sum_{n=-\infty}^{+\infty} u(n)\varphi(n), \qquad \varphi\in\mathcal{S}(\mathbb{Z}).
    \]
    A classic result of harmonic analysis bridges these two different worlds: the higher the regularity of $\psi$, the faster the decay of its Fourier coefficients. In symbols, $\psi\in C^\infty(\mathbb{T})$ if and only if $\mathcal{F}^{-1}\psi\in\mathcal{S}(\mathbb{Z})$. Dualizing $\mathcal{F}^{-1}$, one obtains a bijection $\mathcal{F}\colon \mathcal{S}'(\mathbb{Z})\to\mathcal{P}'$, that acts as

    \begin{equation}\label{E:distributionalFourier}
         \mathcal{F}u(\psi) =  u(\mathcal{F}^{-1}\psi) =\sum_{n=-\infty}^{+\infty} u(n) \mathcal{F}^{-1}\psi(n), \qquad \psi\in\mathcal{C}^\infty(\mathbb{T}),u\in\mathcal{S}'(\mathbb{Z}).
    \end{equation}

    This is how we shall understand the Fourier transform of $\ell^p$-functions, $1\leqslant p\leqslant \infty$, being functions on $\mathbb Z$ with slow growth. If $p\leq2$, this definition coincides with the classical Fourier transform. 
    
    We shall use the standard notion of support of a distribution on the torus. In particular,
    $\operatorname{supp}\mathcal F f\subseteq[0,\pi]$ means that
    $\langle\mathcal F f,\psi\rangle=0$ for every test function $\psi$ supported in
    $(-\pi,0)$.

    \subsection{Discrete holomorphicity}
    We review Duffin’s discrete contour integrals and the corresponding Cauchy-type formula; see \cite{D}. We begin with some notation and basic definitions. Recall that we always identify $\mathbb Z^2$ with $\mathbb Z+i\mathbb Z\subseteq \mathbb C$.
    
    Given $z_0$ in $\mathbb Z^2$ we set $z_1=z_0+1, z_2=z_0+1+i$ and $z_3=z_0+i$. 
    Consider the unit square
    
    \bigskip
    
    \begin{center}
    \begin{tikzpicture}[scale=0.5]
        \def\gridsize{3}
        
        \foreach \x/\y/\label in {0/0/$z_0$, 3/0/$z_1$, 3/3/$z_2$, 0/3/$z_3$}
        {
            \fill (\x, \y) circle (2pt) node[right=2pt] {\label};
        }
    \end{tikzpicture}
    \end{center}
    \bigskip
    and set $f_j=f(z_j)$. We refer to this square as the \emph{square associated with $z_0$}.  For $\varepsilon$ in $\{\pm 1,\pm i\}$ we introduce the operators  
    \begin{align*}
    X_{\varepsilon} f(z_0)= f_0+\varepsilon f_1+\varepsilon^2f_2+\varepsilon^3f_3.
    \end{align*}
    Sometimes, with a slight abuse of notation, if $Q$ is a square, by evaluating $X_\varepsilon f$ on the square $Q$ we mean $ X_\varepsilon f(z_0)$, where $z_0$ is the south-west vertex of $Q$.
    By definition, we say that a function $f$ is (discrete) holomorphic in the square associated with $z_0$ if and only if $X_if(z_0)=0$, which is equivalent to saying that 
    \[
    \frac{f_2-f_0}{z_2-z_0}=\frac{f_3-f_1}{z_3-z_1}.
    \]
    It is not hard to check that  
    \[
    4f(z_0)=(X_i+X_{-i}+X_1+X_{-1})f(z_0).
    \]
    By rewriting the operators $X_\varepsilon$ in terms of suitable discrete line integrals, we obtain a Cauchy-type formula.    
    
    Before turning to contour integrals, we introduce the following basic family of discrete entire functions, that is, functions which are discrete holomorphic on the whole lattice. These functions and related ones were studied in \cite{F} and \cite{ZD}. For $t\in[-\pi,\pi], t\neq \pm\pi/2$, we define the \emph{discrete exponential}
    \begin{equation}\label{d:discrete-exponential}
    e_{t}(m+in) = e^{itm}\bigg(\frac{1+ie^{it}}{i+e^{it}}\bigg)^n=e^{itm}\bigg(\frac{\cos t}{1+\sin t}\bigg)^n,\qquad m,n\in\mathbb{Z}.
    \end{equation}

    For every $t$ in $[-\pi,\pi], t\neq \pm \pi/2$, the function $e_t$ is holomorphic on $\mathbb{Z}^2$ and it is an extension to $\mathbb Z^2$ of the classical exponential function. Indeed, $e_t(m)=e^{itm}$ for every $m$ in $\mathbb{Z}$.
    
    \subsection{Contour integrals}
    Let us consider the chain of lattice points $z_0,z_1,\ldots,z_m$.  Following Duffin's notation in \cite{D}, define
    \begin{align*}
     &\int_{z_0}^{z_m}f:g=\sum_{j=1}^{m}(z_j-z_{j-1})\Big(\frac{f_j+f_{j-1}}{2}\Big)\Big(\frac{g_j+g_{j-1}}{2}\Big);\\
     &\int_{z_0}^{z_m}f:g'=\sum_{j=1}^{m}\Big(\frac{f_j+f_{j-1}}{2}\Big)(g_{j}-g_{j-1});\\
     & \int_{z_0}^{z_m}f:g''= -\sum_{j=1}^{m}(z_j-z_{j-1})(f_j-f_{j-1})(g_j-g_{j-1})
    \end{align*}
    We remark that the third type of line integral can be viewed as a line integral of the first type. Given a function $f$, its \emph{dual function} $f^*$ is defined by
    \begin{equation}\label{D:dualfunction}    f^\ast(m+in)=(-1)^{m+n}\overline{f(m+in)}.
    \end{equation}
    One can check that the holomorphicity of $f^*$ on a unit square is equivalent to that of $f$, and that
    \[
    \int_{z_0}^{z_m}f:g''=-4\overline{\int_{z_0}^{z_m}f^\ast:g^\ast}.
    \]
    
    In the definitions of line integrals, we also allow closed paths, that is, $z_m=z_0$. In this case, by rearranging the terms of the previous expressions and using the convention $z_{m+1}=z_1$, one obtains 
    \begin{align*}
    &\int_{z_0}^{z_m}f:g=\sum_{j=1}^{m}\frac{f_j}{4}\big((z_j-z_{j-1})g_{j-1}+(z_{j+1}-z_{j-1})g_j+(z_{j+1}-z_j)g_{j+1}\big);\\
     &\int_{z_0}^{z_m}f:g'=\sum_{j=1}^{m}\frac{f_j}{2}(g_{j+1}-g_{j-1});\\
     &\int_{z_0}^{z_m}f:g''=\sum_{j=1}^{m}f_j\bigg(\frac{g_{j+1}-g_{j}}{z_{j+1}-z_{j}}-\frac{g_j-g_{j-1}}{z_j-z_{j-1}}\bigg).
    \end{align*} 
    

     
    By a  \emph{(simple) region} $R$ we mean a finite union of unit squares whose boundary $\partial R$ is a simple closed polygonal curve consisting of edges of the constituent squares. We say that a function $f$ is holomorphic on $R$ if it is holomorphic on each of the unit squares constituting $R$.

    The following proposition relates the values of the functions on the boundary of a region with the values of the functions in the interior of the region.   With the symbol $\sum_{Q\subseteq R}$ we mean the sum  over all unit squares  whose union is $R$. 
    \begin{prop}\label{p:integrals_identities}
       The following identities hold true.
    \begin{align}
    \label{E:integrals_identities1}&\int_{\partial R} f:g= \frac{1-i}{8}\sum_{Q\subseteq R} (X_1gX_if+X_1fX_ig);\\
    \label{E:integrals_identities2}&\int_{\partial R}f:g'=\frac{i}{4}\sum_{Q\subseteq R}(X_{-i}gX_{i}f-X_{-i}fX_{i}g);\\
    \label{E:integrals_identities3}&\int_{\partial R}f:g''=-\frac{1+i}{2}\sum_{Q\subseteq R} (X_{-1}gX_{i}f+X_{-1}fX_{i}g).
    \end{align}
    \end{prop}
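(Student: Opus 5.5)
The plan is to localize: first prove each identity when $R$ is a single unit square, and then pass to a general simple region by additivity. The contour integrals in Proposition~\ref{p:integrals_identities} are sums over oriented edges, and each summand changes sign when the edge is reversed. This holds for all three integrals, since the factor $z_j-z_{j-1}$, or $g_j-g_{j-1}$ for the second one, flips sign while the averages and the product of differences do not. For a region $R$ that is a finite union of unit squares, we therefore get
\[
\int_{\partial R} = \sum_{Q\subseteq R}\int_{\partial Q}
\]
for all three pairings, with every boundary oriented counterclockwise. Each interior edge is shared by exactly two squares and is traversed in opposite directions, so its contributions cancel. What survives is exactly the counterclockwise boundary curve $\partial R$, which is a simple closed polygon by the definition of a region. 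The right-hand sides are already sums over $Q\subseteq R$, so it suffices to verify the three identities for $R=Q$, the square associated with some $z_0$.

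For a single square, set $f_j=f(z_j)$ and $g_j=g(z_j)$ with $z_0,z_1,z_2,z_3$ ordered counterclockwise and indices taken mod $4$. The edge increments are $z_1-z_0=1$, $z_2-z_1=i$, $z_3-z_2=-1$ and $z_0-z_3=-i$, that is, $z_{j+1}-z_j=i^j$. The left-hand sides are then explicit bilinear forms in $(f_0,\dots,f_3)$ and $(g_0,\dots,g_3)$:
\begin{align*}
\int_{\partial Q}f:g&=\tfrac14\sum_{j} i^{j}(f_j+f_{j+1})(g_j+g_{j+1}),\\
\int_{\partial Q}f:g'&=\tfrac12\sum_j (f_j+f_{j+1})(g_{j+1}-g_j),\\
\int_{\partial Q}f:g''&=-\sum_j i^{j}(f_{j+1}-f_j)(g_{j+1}-g_j).
\end{align*}
On the right-hand side, $X_\varepsilon f=\sum_k \varepsilon^k f_k$. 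The product $X_\varepsilon g\,X_\eta f$ has coefficient $\eta^k\varepsilon^l$ on the monomial $f_kg_l$.

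The verification then reduces to comparing the $16$ coefficients of $f_kg_l$ on each side. The symmetrized structure on the right, for example $X_1gX_if+X_1fX_ig$, gives coefficient $c\,(i^k+i^l)$ with $c$ the prefactor. The left-hand side of the first identity contains only monomials with $k=l$ or $|k-l|=1$ mod $4$. So the check must also confirm that the diagonal monomials $f_0g_2$ and $f_1g_3$ (and their transposes) cancel on the right. For the first identity this holds because $i^0+i^2=0$ and $i^1+i^3=0$. The second identity is antisymmetric, and for it the relevant quantity is $i^k(-i)^l-i^l(-i)^k$, which vanishes when $k-l$ is even. It should match the terms $f_jg_{j+1}-f_{j+1}g_j$ with coefficient $\tfrac12$, and I will check that the prefactor $\tfrac i4$ produces exactly that. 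The third identity is handled the same way, using $(-1)^l i^k+(-1)^k i^l$.

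I expect the main obstacle to be only bookkeeping: getting the orientation and prefactors exactly right, for instance $\frac{1-i}{8}$ against the $\frac14$ coming from the averages. As an alternative for the third identity, I would derive it from the first via the dual-function relation $\int f:g''=-4\overline{\int f^*:g^*}$. This uses that on $Q$ one has $X_\varepsilon f^*=\overline{X_{-\bar\varepsilon}f}$ up to a global sign $(-1)^{m+n}$, and the two such signs cancel in the product. That route would serve as a cross-check on the constants.
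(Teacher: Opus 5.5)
Your localization step is correct: each summand changes sign under edge reversal, interior edges cancel, and the problem reduces to a single unit square. Your coefficient checks for \eqref{E:integrals_identities1} and \eqref{E:integrals_identities2} also go through as planned. This is the standard argument; the paper itself only cites Duffin.

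The gap is in \eqref{E:integrals_identities3}. With the single-square formula you wrote, $-\sum_j i^j(f_{j+1}-f_j)(g_{j+1}-g_j)$, the identity is false. That formula is the literal open-path definition displayed in the paper. To see the failure, take $f=g=\delta_{z_0}$. Only the edges $z_0\to z_1$ and $z_3\to z_0$ contribute, giving $-1+i$ on the left. On the right, $X_{-1}g=X_if=X_{-1}f=X_ig=1$ gives $-(1+i)$. More generally, your left side has coefficient $i^k$ on $f_kg_{k+1}$, whereas $-\frac{1+i}{2}\big((-1)^li^k+(-1)^ki^l\big)$ gives $(-i)^k$. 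So ``handled the same way'' does not go through as set up.

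The fix is to give the $''$-integral the conjugate increment:
\[
\int f:g''=-\sum_j\overline{(z_j-z_{j-1})}\,(f_j-f_{j-1})(g_j-g_{j-1}),
\]
equivalently $(z_j-z_{j-1})^{-1}$ for unit steps. The rest of the paper is consistent only with this version, for two reasons.
\begin{enumerate}
\item It matches the closed-path formula $\sum_j f_j\big(\frac{g_{j+1}-g_j}{z_{j+1}-z_j}-\frac{g_j-g_{j-1}}{z_j-z_{j-1}}\big)$.
\item It matches the duality relation. Along any edge $f^*_j+f^*_{j-1}=\pm\overline{(f_j-f_{j-1})}$, so $-4\overline{\int f^*:g^*}$ produces exactly $\overline{z_j-z_{j-1}}$.
\end{enumerate}

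With $(-i)^j$ in place of $i^j$, the left-side coefficients are:
\begin{itemize}
\item $-(1+i)(-i)^k$ on $f_kg_k$;
\item $(-i)^k$ on $f_kg_{k+1}$;
\item $(-i)^{k-1}$ on $f_kg_{k-1}$;
\item $0$ on $f_kg_{k+2}$.
\end{itemize}
These match the right-hand side exactly.

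Your proposed ``cross-check'' via $f^*$ is more than a check. Under the literal definitions it would contradict your direct computation, which is how the inconsistency surfaces. Once the definition is corrected, it is the cleanest proof of \eqref{E:integrals_identities3}. You have $X_1g^*\,X_if^*=\overline{X_{-1}g\,X_if}$ and $X_1f^*\,X_ig^*=\overline{X_{-1}f\,X_ig}$, since the two factors $(-1)^{m+n}$ cancel. Hence \eqref{E:integrals_identities1} applied to $(f^*,g^*)$ turns directly into \eqref{E:integrals_identities3}, with prefactor $-4\cdot\overline{\tfrac{1-i}{8}}=-\tfrac{1+i}{2}$.
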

    \begin{proof}
    We refer the reader to \cite[Section 3]{D}.
    \end{proof}
    
    From this proposition some Cauchy-type formulas can be derived. To this end, we introduce the following auxiliary function.  Let $q$ be a function defined on the lattice such that
    \begin{equation}\label{eq:q}
     X_iq=\delta_0,
    \end{equation}
    where $\delta_0$ is the Dirac delta.
    The existence of such a function $q$ is proved in \cite{D} and it will be briefly discussed in Subsection \ref{s:exisrtence_q}. For each lattice point $z_0$ the translated function $q_{z_0}(z):=q(z-z_0)$ satisfies the analogous property of $q$ but at $z_0$. Namely, $X_iq_{z_0}$ is the Dirac delta $\delta_{z_0}$. Thus, the function $q$ can be regarded as a discrete fundamental solution of the operator $X_i$.

    \begin{prop}\label{p:integrals_q}
    Let $R$ be a simple region and let $z_0$ be a lattice point such that its associated square is contained in $R$. Let $f$ be a function holomorphic in $R$. Then,
    \begin{align*}
       &\int_{\partial R} f:q_{z_0}=\frac{1-i}{8}X_{1}f(z_0);\\
       &\int_{\partial R}f:q_{z_0}'=-\frac{i}{4}X_{-i}f(z_0);\\
       &\int_{\partial R}f:q_{z_0}''=-\frac{1+i}{2}X_{-1}f(z_0).
       \end{align*}
    \end{prop}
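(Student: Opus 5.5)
Each identity follows by applying Proposition~\ref{p:integrals_identities} with $g=q_{z_0}$ and using two facts. First, $f$ is holomorphic on every unit square $Q\subseteq R$, so $X_if(Q)=0$. Second, $X_iq_{z_0}=\delta_{z_0}$, so $X_iq_{z_0}(Q)$ vanishes unless $Q$ is the square associated with $z_0$, where it equals $1$. By hypothesis this square is one of the squares making up $R$.

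Concretely, consider \eqref{E:integrals_identities1} with $g=q_{z_0}$. The right-hand side is
\[
\frac{1-i}{8}\sum_{Q\subseteq R}\bigl(X_1q_{z_0}\,X_if+X_1f\,X_iq_{z_0}\bigr)(Q).
\]
The first product vanishes on every $Q$ because $X_if=0$. The second product vanishes except on the square associated with $z_0$, where it equals $X_1f(z_0)$. This gives the first formula.

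For \eqref{E:integrals_identities2}, the term $X_{-i}q_{z_0}X_if$ drops out in the same way. The surviving term is $-X_{-i}f\,X_iq_{z_0}$, evaluated at the square of $z_0$, which gives $\frac{i}{4}\cdot(-X_{-i}f(z_0))=-\frac{i}{4}X_{-i}f(z_0)$.

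For \eqref{E:integrals_identities3}, the term $X_{-1}q_{z_0}X_if$ vanishes. The term $X_{-1}f\,X_iq_{z_0}$ contributes $X_{-1}f(z_0)$, so the right-hand side equals $-\frac{1+i}{2}X_{-1}f(z_0)$.

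The only point needing care is that Proposition~\ref{p:integrals_identities} is a purely algebraic summation-by-parts identity, valid for arbitrary $f,g$ defined on the vertices of $R$. Holomorphicity of $g$ is not required, which is why $g=q_{z_0}$ is admissible even though $q_{z_0}$ fails to be holomorphic on one square of $R$.

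We must also keep the convention that the value of $X_\varepsilon$ on a square means its value at the south-west vertex. Then "$X_iq_{z_0}=\delta_{z_0}$" says exactly that $X_iq_{z_0}(Q)$ is $1$ for the square associated with $z_0$ and $0$ for every other square. With these two observations the proof is immediate; I expect no real obstacle beyond tracking signs and constants.
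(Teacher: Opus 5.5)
Your proposal is correct and follows the paper's proof: you substitute $g=q_{z_0}$ into Proposition~\ref{p:integrals_identities}, the terms containing $X_if$ vanish by holomorphicity, and $X_iq_{z_0}=\delta_{z_0}$ isolates the contribution of the square associated with $z_0$. Your signs and constants in all three identities are right. Your remark that Proposition~\ref{p:integrals_identities} needs no holomorphicity of $g$ is exactly what makes the substitution legitimate.
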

    
    \begin{proof}
    The identities are an immediate consequence of Proposition \ref{p:integrals_identities}, the holomorphicity of $f$ and the definition of $q_{z_0}$.
    \end{proof}
    \begin{cor}\label{l:cauchy_formula}
    Let $R$ be a simple region and let $z_0$ be a point such that its associated square is contained in $R$. Let $f $ be a function holomorphic in $R$. We have the Cauchy-type formula
    \begin{equation}\label{eq:cauchy}
    f(z_0)=\int_{\partial R} f:\big((1+i)q_{z_0}+iq_{z_0}'-\frac{1-i}{4}q_{z_0}''\big).    
    \end{equation}
    \end{cor}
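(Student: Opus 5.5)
The plan is to take a suitable linear combination of the three identities in Proposition~\ref{p:integrals_q} and then use the algebraic decomposition $4f(z_0)=(X_i+X_{-i}+X_1+X_{-1})f(z_0)$ together with holomorphicity, $X_if(z_0)=0$. Since the associated square of $z_0$ lies in $R$ and $f$ is holomorphic there, this decomposition reduces to
\[
4f(z_0)=X_1f(z_0)+X_{-i}f(z_0)+X_{-1}f(z_0).
\]
So it suffices to express each of $X_1f(z_0)$, $X_{-i}f(z_0)$ and $X_{-1}f(z_0)$ as a contour integral over $\partial R$.

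Proposition~\ref{p:integrals_q} supplies exactly these three expressions. Inverting the constants gives:
\begin{align*}
X_1f(z_0)&=\frac{8}{1-i}\int_{\partial R}f:q_{z_0}=4(1+i)\int_{\partial R}f:q_{z_0},\\
X_{-i}f(z_0)&=4i\int_{\partial R}f:q_{z_0}',\\
X_{-1}f(z_0)&=-\frac{2}{1+i}\int_{\partial R}f:q_{z_0}''=-(1-i)\int_{\partial R}f:q_{z_0}''.
\end{align*}

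Summing and dividing by $4$ yields
\[
f(z_0)=(1+i)\int_{\partial R}f:q_{z_0}+i\int_{\partial R}f:q_{z_0}'-\frac{1-i}{4}\int_{\partial R}f:q_{z_0}''.
\]
Each of the three line integrals is linear in its second argument, so the right-hand side can be written as a single formal integral $\int_{\partial R}f:\big((1+i)q_{z_0}+iq'_{z_0}-\frac{1-i}4 q''_{z_0}\big)$, which is exactly \eqref{eq:cauchy}. This combined expression is to be read as notation for the sum of the three distinct types of integral.

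There is no real obstacle here; the argument is essentially a bookkeeping exercise. The only points needing care are the complex constants, namely $\frac{8}{1-i}=4(1+i)$ and $\frac{2}{1+i}=1-i$, and the fact that the identity $4f=\sum_\varepsilon X_\varepsilon f$ holds. The latter follows because $\sum_{\varepsilon^4=1}\varepsilon^k$ equals $4$ for $k=0$ and vanishes for $k=1,2,3$.
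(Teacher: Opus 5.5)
Your proposal is correct and follows the paper's proof: you decompose $4f(z_0)=(X_1+X_{-1}+X_i+X_{-i})f(z_0)$, drop $X_if(z_0)$ by holomorphicity, and invert the three identities of Proposition~\ref{p:integrals_q}. Your constants $4(1+i)$, $4i$ and $-(1-i)$ agree with the paper's $\frac{8}{1-i}$, $-\frac{4}{i}$ and $-\frac{2}{1+i}$.
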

    \begin{proof}
    Since
    \[
    4f(z_0)=(X_1+X_{-1}+X_{-i}+X_i)f(z_0),
    \]
    the holomorphicity of $f$ in $R$ and Proposition \ref{p:integrals_q} guarantee that 
    \[
    4f(z_0)=\frac{8}{1-i}\int_{\partial R}f:q_{z_0}-\frac{2}{1+i}\int_{\partial R}f:q_{z_0}''-\frac{4}{i}\int_{\partial R} f:q_{z_0}'.
    \]
    The conclusion follows.
    \end{proof}
    
    Similar formulas recovering the values of $f$ at the remaining vertices
    $z_1,z_2,z_3$ of the unit square associated with $z_0$ can be obtained.
    We refer the reader to \cite[Section 3]{D}.

    \subsection{Holomorphicity and harmonicity}\label{s:harmonicity}
    
    We recall the relation between discrete holomorphicity and discrete harmonicity. For details, we refer the reader to \cite[Section 2]{D}. We denote by
    \[
    \mathbb Z^2_{\mathrm{even}}
    :=
    \{m+in\in\mathbb Z^2:\ m+n\in 2\mathbb Z\}, \qquad \mathbb Z^2_{\mathrm{odd}}
    :=
    \{m+in\in\mathbb Z^2:\ m+n\in 2\mathbb Z+1\}
    \]
    
    the even and odd sublattices of \(\mathbb Z^2\), respectively.
    
    Let \(f\) be a discrete holomorphic function in a region \(R\), and assume that
    \(R\) contains the following configuration of points,
    
    \begin{center}
    \begin{tikzpicture}[scale=0.3]
        \foreach \x/\y/\label in {
            0/0/$z_0$,
            5/0/$z_1$,
            5/5/$z_2$,
            0/5/$z_3$,
            -5/5/$z_4$,
            -5/0/$z_5$,
            -5/-5/$z_6$,
            0/-5/$z_7$,
            5/-5/$z_8$
        }
        {
            \fill (\x,\y) circle (2.5pt) node[right=3pt] {\label};
        }
    \end{tikzpicture}
    \end{center}
    Setting \(f_j:=f(z_j)\), using the holomorphicity of $f$ in the squares having for south-west vertices the points $z_0,z_5,z_6$ and $z_7$ one can see that
    \begin{equation*}
    f_0=\frac{f_2+f_4+f_6+f_8}{4}.
    \end{equation*}
    Equivalently,
    \[
    \Delta_{\mathrm{diag}} f(z_0)
    :=f_0-
    \frac{f_2+f_4+f_6+f_8}{4}
    =
    0.
    \]
    Since \(\Delta_{\mathrm{diag}}\) has real coefficients, the same identity holds
    separately for the real and imaginary parts of \(f\). The operator \(\Delta_{\mathrm{diag}}\) preserves the decomposition
    \[
    \mathbb Z^2=\mathbb Z^2_{\mathrm{even}}\cup \mathbb Z^2_{\mathrm{odd}},
    \]
    and is the usual nearest-neighbor discrete Laplacian on each of the two
    diagonal sublattices. Consequently, if \(f\) is discrete holomorphic in \(R\),
    then the restrictions of \(\operatorname{Re} f\) and \(\operatorname{Im} f\) to
    \(\mathbb Z^2_{\mathrm{even}}\cap R\) and to
    \(\mathbb Z^2_{\mathrm{odd}}\cap R\) are harmonic with respect to
    \(\Delta_{\mathrm{diag}}\) at points
    $z_0$ in $R$ such that 
    \[
    z_0+1+i,\qquad z_0-1+i,\qquad z_0-1-i,\qquad z_0+1-i
    \]
    all belong to \(R\). 
    In short, we shall say that if \(f\) is discrete holomorphic in a region \(R\),
    then \(f\) is also harmonic in \(R\), where harmonicity is understood in the
    sense described above.

    \subsection{Existence of the function $q$}\label{s:exisrtence_q}
    Let us present a general argument that we will later specialize to prove the existence of the function $q$ introduced in \eqref{eq:q}. We want to solve the equation
    \begin{equation}\label{eq:L_w}
    X_iq(m+in)=w(m+in)
    \end{equation}
    where $w$ is a lattice function which satisfies
    \[
    \sum_{m,n\in\mathbb Z}|w(m+in)|<\infty.
    \]
    This assumption guarantees the continuity of the function
    \[
    W(x,y)=\sum_{m,n\in\mathbb Z} w(m+in)e^{-i(mx+ny)},\quad (x,y)\in\mathbb R^2.
    \]
    An explicit solution of \eqref{eq:L_w} is given by
    \[
    q(m+in)=\frac{1}{(2\pi)^2}\int_{0}^{2\pi}\int_{0}^{2\pi}\frac{W(x,y)}{\rho(x,y)}e^{i(mx+ny)}\operatorname{d}\!x\operatorname{d}\!y
    \]
    where
    \[
    \rho(x,y)=1+ie^{ix}-ie^{iy}-e^{ix+iy}.
    \]
    One can check that $|\rho(x,y)|^2=4-4\cos(x)\cos(y), x,y\in[0,2\pi]$. This proves that the integral that defines $q$ converges absolutely, for $W$ is bounded and
    \[
    \int_{0}^{2\pi}\int_{0}^{2\pi} \frac{1}{\sqrt{1-\cos(x)\cos(y)}}\operatorname{d}\!x\operatorname{d}\!y <\infty.
    \]
    Now, specializing to the case $w(m+in)=\delta_{(0,0)}$, we get $W(x,y)=1$ and  we obtain 
    \begin{equation}\label{eq:q_explicit}
    q(m+in)=\frac{1}{(2\pi)^2}\int_{0}^{2\pi}\int_{0}^{2\pi}\frac{e^{i(mx+ny)}}{1+ie^{ix}-ie^{iy}-e^{ix+iy}}\operatorname{d}\!x\operatorname{d}\!y.    
    \end{equation}
    This function satisfies \eqref{eq:q}.
    \subsection{Kernel estimates}
    The following two technical lemmas will be used throughout the paper. The function $C$ introduced below will later play the role of the Cauchy kernel of the discrete upper half-lattice.
     The first lemma is proved in \cite[Section 4]{D}, and we only recall the main steps of the argument. The second result is also mentioned there, where it is deduced from the asymptotic behavior of the discrete Green function and from earlier results in the literature. We provide a direct self-contained proof in the appendix.
        \begin{lem}\label{l:C_expansion}
            Let $q$ be the function defined in \eqref{eq:q_explicit} and set 
            \begin{equation}\label{eq:Cauchy_kernel}
            C(z)=q(-z)+iq(1-z).
            \end{equation}
        We have the following:
        \begin{enumerate}
        \item[$(i)$] $C$ is discrete holomorphic on $\mathbb Z^2_+$.
        \item[$(ii)$] For every $n\geqslant 0$,\begin{align*}
        C(m+in)&=\frac{1}{2\pi}\int_0^{\pi}e_t(m+in)\operatorname{d}\!t=\frac{1}{2\pi}\int_0^{\pi}e^{imt}\Big(\frac{\cos t}{1+\sin t}\Big)^n\operatorname{d}\!t,
        \end{align*} 
        whereas, for every $n<0$,
        \begin{align}\label{eq:kernel_negative}
        C(m+in)&=(-1)^{m+n+1}C(m+i|n|).
        \end{align}
        \end{enumerate}
        \end{lem}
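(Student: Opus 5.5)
Since $C(z)=q(-z)+iq(1-z)$, we start from the explicit formula \eqref{eq:q_explicit} and compute $C(m+in)$ as a double integral. Substituting $-z$ and $1-z$ and changing variables $x\mapsto -x$, $y\mapsto -y$ gives
\[
C(m+in)=\frac{1}{(2\pi)^2}\int_{-\pi}^{\pi}\int_{-\pi}^{\pi}\frac{(1+ie^{-ix})\,e^{i(mx+ny)}}{1+ie^{-ix}-ie^{-iy}-e^{-ix-iy}}\operatorname{d}\!x\operatorname{d}\!y .
\]
We then perform the $y$-integration first by residues. Writing $w=e^{iy}$, the denominator is affine in $w^{-1}$:
\[
(1+ie^{-ix})-w^{-1}(i+e^{-ix}).
\]
The integrand is therefore a rational function of $w$ with a single simple pole at
\[
w_0=\frac{i+e^{-ix}}{1+ie^{-ix}},
\]
and possibly a pole at $w=0$ when $n<0$. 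Setting $t=x$, a direct computation shows that
\[
w_0^{-1}=\frac{1+ie^{-it}}{i+e^{-it}}=\frac{\cos t}{1-\sin t}\quad\text{(up to the sign conventions)} .
\]
Comparing with \eqref{d:discrete-exponential}, $|w_0|<1$ exactly on one half of the circle in $t$ and $|w_0|>1$ on the other half. The boundary case $|w_0|=1$ occurs only at $t=\pm\pi/2$, a null set, which is harmless because the double integral converges absolutely (Subsection \ref{s:exisrtence_q}), so Fubini applies.

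For $n\geqslant0$, the $y$-integral equals $2\pi$ times the residue at $w_0$ when $w_0$ lies inside the disk, and $0$ otherwise. The residue produces exactly $e^{imt}\bigl(\frac{\cos t}{1+\sin t}\bigr)^n$, and the factor $(1+ie^{-ix})$ cancels the derivative of the denominator. After adjusting the sign of $t$ if necessary, the surviving range is $t\in(0,\pi)$, which yields (ii) for $n\geqslant0$. The main obstacle is this bookkeeping: checking which half-interval survives and that the prefactor combination $q(-z)+iq(1-z)$ produces precisely the constant $1/(2\pi)$, with no extra factor. I expect this to be where the particular linear combination defining $C$ is needed. The single term $q$ alone gives the integrand with the extra factor $1/(1+ie^{-ix})$, and the shift-by-one term supplies the compensating factor.

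Part (i) then follows from (ii). Each $e_t$ is discrete holomorphic, so $X_i$ can be differentiated under the integral for points with $n\geqslant0$, since the integrand is bounded there. Alternatively, $X_iC(z)=\delta$ evaluated at $-z$ and $1-z$-type points, which vanish on the relevant squares of $\mathbb Z^2_+$.

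For $n<0$ we avoid a second residue computation and use the dual-function symmetry. The function $u(z)=(-1)^{m+n+1}C(\bar z)$ is discrete holomorphic wherever $C$ is holomorphic on the reflected squares, which we check directly from $X_i$. Both $u$ and $C$ agree on the row $n=0$ up to the sign; to handle this, we also verify that $C(m)=(-1)^{m+1}C(m)$ fails unless $C(m)$ has the right structure. More simply, we redo the residue computation for $n<0$: now the integral picks up the pole at $w=0$ or the complementary half-interval, giving $-\frac{1}{2\pi}\int_{-\pi}^{0}e^{imt}\bigl(\frac{\cos t}{1+\sin t}\bigr)^{n}\operatorname{d}\!t$. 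The substitution $t\mapsto t-\pi$, or $t\mapsto -t$ combined with $\frac{\cos t}{1+\sin t}\cdot\frac{\cos t}{1-\sin t}=1$, turns the power $n<0$ into $|n|$ and the phase into $(-1)^{m+n}$, which gives \eqref{eq:kernel_negative}.
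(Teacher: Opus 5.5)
Your proposal is correct and follows essentially the paper's route: you write $C$ as the double integral coming from \eqref{eq:q_explicit} and do the $y$-integral first. You use residues where the paper expands $(1-R(x)e^{iy})^{-1}$ in a geometric series, which is the same computation. This gives the half-interval $(0,\pi)$ for $n\geqslant0$ and $(-\pi,0)$ with a minus sign for $n<0$, after which $t\mapsto t-\pi$ yields \eqref{eq:kernel_negative}.

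One small slip: $|w_0|=1$ occurs at $t\in\{0,\pm\pi\}$, where $w_0=\pm1$, not at $t=\pm\pi/2$, where $w_0$ is $0$ or $\infty$. This is harmless, since it is still a null set, and the abandoned reflection argument in your last paragraph can simply be deleted.
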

    \begin{proof}
        Assertion $(i)$ easily follows by definition.  About $(ii)$ we have
        \begin{align*}
        C(m+in)&=\frac{1}{(2\pi)^2} \int_{-\pi}^{\pi}\int_{-\pi}^{\pi}  \frac{e^{-i(mx+ny)}}{1-R(x) e^{iy}}\operatorname{d}\!y\operatorname{d}\!x,
        \end{align*}
        where $R(x)=\frac{i+e^{ix}}{1+ie^{ix}}$.
        The conclusion follows by computing the inner integral in $\operatorname{d}\!y$ using the geometric series expansion of
        $(1-R(x)e^{iy})^{-1}$ in the regions
        $|R(x)|<1$ and $|R(x)|>1$. For the details we refer the reader to \cite[Section 4, equation (60)]{D}.
        \end{proof}
        
    \begin{lem}\label{l:kernel_asymptotic}
    Let $m+in\in\mathbb Z^2\setminus\{0\}$. Then,
    \begin{equation}
    \label{eq:asymptotic}
    C(m+in)=
    \frac{i}{2\pi}\left(
    \frac{1}{m+in}
    -
    (-1)^{m+n}\frac{1}{m-in}
    \right)
    +O\!\left((m^2+n^2)^{-\frac32}\right).
    \end{equation}
    The implicit constant in the remainder term does not depend on $m,n$. Notice that, more precisely, \(
    C(m+in)=0
    \)
    on the set $\big(2\mathbb Z \cup i(2\mathbb Z+1)\big)\setminus\{0\}$.
    Moreover, for every $|n|\geqslant 1$,
    \begin{equation}
    \label{eq:K_norm_p}
    \begin{aligned}
    &\|C(\cdot+in)\|_{\ell^p}
    &\leqslant cn^{\frac1p-1},
    && 1<p\leqslant\infty,
    \end{aligned}
    \end{equation}
    where we set $1/p=0$ if $p=\infty$.
    \end{lem}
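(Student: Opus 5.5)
The proof rests on the explicit integral representation of Lemma~\ref{l:C_expansion}$(ii)$. By the reflection identity \eqref{eq:kernel_negative} it suffices to treat $n\geqslant 0$. Set $r(t)=\cos t/(1+\sin t)$ and write
\[
C(m+in)=\frac{1}{2\pi}\int_0^{\pi}e^{imt}r(t)^n\,\operatorname{d}\!t .
\]
The function $r$ equals $1$ at $t=0$, equals $-1$ at $t=\pi$, and vanishes at $t=\pi/2$, where it changes sign.

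\textbf{The vanishing set.} The change of variables $t\mapsto\pi-t$ gives $r(\pi-t)=-r(t)$. Hence
\[
C(m+in)=(-1)^{m+n}\,\frac{1}{2\pi}\int_0^\pi e^{-imt}r(t)^n\,\operatorname{d}\!t .
\]
\begin{itemize}
\item For $n=0$ and $m$ even, $m\neq 0$, the integral $\int_0^\pi e^{imt}\,\operatorname{d}\!t$ vanishes directly.
\item For $m=0$ and $n$ odd, the identity above gives $C(in)=-C(in)$, so $C(in)=0$.
\end{itemize}
Together these give the stated vanishing set.

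\textbf{The asymptotics.} The two leading terms come from the endpoints $t=0$ and $t=\pi$, after a change of variable. Near $t=0$ one has $r(t)=e^{-\phi(t)}$ with $\phi(t)=t+O(t^3)$; indeed $\log r(t)=-t-t^3/6+\dots$, since $r(t)=\tan(\pi/4-t/2)$. So the endpoint $t=0$ contributes approximately
\[
\frac{1}{2\pi}\int_0^\infty e^{imt-nt}\,\operatorname{d}\!t=\frac{1}{2\pi(n-im)}=\frac{i}{2\pi(m+in)} .
\]
By the symmetry $t\mapsto\pi-t$, the endpoint $t=\pi$ contributes $(-1)^{m+n}$ times the same expression with $m$ replaced by $-m$, that is,
\[
-(-1)^{m+n}\frac{i}{2\pi(m-in)} .
\]
To make this rigorous, I would split $[0,\pi]$ with a smooth partition of unity at $\pi/4$ and $3\pi/4$. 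On the middle piece, repeated integration by parts gives decay $O(|m|^{-N})$ when $n$ is small, and $|r|^n$ gives exponential decay when $n$ is large. Near $0$, I would substitute $s=\phi(t)$ exactly, which turns the integral into
\[
\int e^{-ns}e^{im\psi(s)}\psi'(s)\chi\,\operatorname{d}\!s ,
\]
with $\psi(s)=s+O(s^3)$. Then $e^{im\psi(s)}\psi'(s)=e^{ims}\bigl(1+O(s^2)+O(ms^3)\bigr)$. The correction terms are integrated against $e^{-(n-im)s}$ and estimated by integration by parts in the complex frequency $n-im$, each gaining a factor $|n-im|^{-1}$. The terms $ms^3$ give $|m|\,|z|^{-4}\lesssim|z|^{-3}$.

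\textbf{Main obstacle.} The main difficulty is making this remainder uniform in both $m$ and $n$. One must not expand $e^{im(\psi(s)-s)}$ naively when $ms^3$ is large. I would handle this by first dealing with the regime $|m|\leqslant n$, where the weight $e^{-ns}$ localizes everything and a Taylor expansion works. In the regime $|m|>n$, I would integrate by parts in $t$ directly, three times, and read off the boundary terms at $0$ and $\pi$: the first boundary term gives the main part, and the later ones are $O(|z|^{-3})$.

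\textbf{The $\ell^p$ bound.} This follows from the asymptotics. For fixed $n\geqslant 1$, the main term is bounded by $c/|m+in|$ and the error by $c\,|m+in|^{-3}$. Therefore
\[
\sum_m|C(m+in)|^p\lesssim\sum_m (m^2+n^2)^{-p/2}\lesssim n^{1-p}
\]
for $p>1$, which gives $\|C(\cdot+in)\|_{\ell^p}\leqslant cn^{1/p-1}$. For $p=\infty$, the bound $|C(m+in)|\lesssim 1/n$ follows in the same way, or directly from $\int_0^\pi|r|^n\,\operatorname{d}\!t\lesssim 1/n$. Negative $n$ follow from \eqref{eq:kernel_negative}.
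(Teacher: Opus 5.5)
Your outline is the same as the paper's appendix proof. Both use symmetric cutoffs at $t=0$ and $t=\pi$, a middle piece that is $O(|z|^{-K})$ by exponential decay or integration by parts, the substitution $s=\log\frac{1+\sin t}{\cos t}$ near $t=0$, and Laplace-type integrals in $z=n-im$. The vanishing set and the $\ell^p$ bounds are handled as in the paper.

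The obstacle you single out is real. The paper itself writes $e^{im(t(u)-u)}=1-\frac{i}{6}mu^3+O(|m|u^5)$, which drops the $O(m^2u^6)$ term and is not uniform in $m$. Your Taylor argument is fine for $|m|\lesssim n$, where the crude bound $\int(s^2+|m|s^3)e^{-ns}\operatorname{d}\!s\lesssim n^{-3}+|m|n^{-4}$ suffices.

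\textbf{The gap: your remedy for $|m|>n$ fails.} Integrating by parts against $e^{imt}$ alone, with $F=r^n$ as amplitude, costs a factor $n$ per derivative. Since $F'(0)=-n$ and $F'(\pi)=(-1)^n n$, the first two boundary terms of $\int_0^\pi e^{imt}F\operatorname{d}\!t=2\pi C(m+in)$ are $\frac{(-1)^{m+n}-1}{im}$ and $\frac{(1+(-1)^{m+n})n}{m^2}$.
\begin{itemize}
\item For $m+n$ even the first term vanishes, while the main part of $2\pi C(m+in)$ is $\frac{2n}{m^2+n^2}$. So the second boundary term carries the main term; it is not $O(|z|^{-3})$, and the first boundary term is not ``the main part''.
\item In general the $k$-th boundary term has size $n^{k-1}|m|^{-k}$, and the remainder after three steps has size $n^2|m|^{-3}$. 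The procedure therefore works only for bounded $n$.
\item In the intermediate range, e.g.\ $n=|m|/\log|m|$, neither of your two arguments gives $O(|z|^{-3})$. No fixed number of such integrations by parts does, and your absolute bound is off by a factor $\log^4|m|$.
\end{itemize}

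\textbf{The missing idea.} Integrate by parts against the full factor $e^{imt}r(t)^n=e^{\Phi(t)}$, where $\Phi(t)=imt-n\log\frac{1+\sin t}{\cos t}$, using the operator $\frac{1}{\Phi'}\frac{\operatorname{d}}{\operatorname{d}\!t}$.
\begin{itemize}
\item $\Phi'(t)=im-n\sec t$ satisfies $|\Phi'(t)|^2=m^2+n^2\sec^2 t\geqslant |z|^2$ on $[0,\pi/2)$.
\item $|\Phi^{(k)}|\lesssim n\leqslant |z|$ for $k\geqslant 2$ on the support of the cutoff, so every derivative of $1/\Phi'$ is $O(|z|^{-1})$. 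Each step thus gains $|z|^{-1}$ uniformly, and $|e^{\Phi}|\leqslant 1$.
\item The boundary terms at $t=0$ are $\frac{1}{n-im}$, then $0$ (because $\Phi''(0)=0$ and the cutoff is flat at $0$), then $O(n|z|^{-4})$. The remainder after three steps is $O(|z|^{-3})$.
\end{itemize}
This gives $\frac{1}{n-im}+O(|z|^{-3})$ for all $m$ and all $n\geqslant 0$ at once, with no regime split. The endpoint $t=\pi$ then follows from your symmetry $t\mapsto \pi-t$.
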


    \section{Properties of  $H^p$ spaces}\label{s:Hp}
    In this section we establish the basic structural properties of the spaces
    $H^p$. The main point is the boundary norm identity
    \[
    \|f\|_{H^p}
    =
    \|f|_{\mathbb Z}\|_{\ell^p},
    \qquad 1\leqslant p<\infty.
    \]
    Its proof requires some work. We first derive a Cauchy-type reproducing formula
    in the upper half-lattice, and then show that the restriction 
    $f|_{\mathbb Z}$ reproduces $f$ through a Poisson-type kernel. The positivity and normalization of this
    kernel imply the monotonicity of the horizontal $\ell^p$-norms, and hence the
    desired norm identity.
    \begin{thm}
    For every $1\leqslant p\leqslant\infty$ the quantity
    \[
    \|f\|_{H^p}=\sup_{n\geqslant 0}\|f(\cdot+in)\|_{\ell^p}=\sup_{n\in\mathbb N} \Big(\sum_{m=-\infty}^{+\infty}|f(m+in)|^p\Big)^{\frac1p},
    \]
    with the usual modification for $p=\infty$,
    defines a norm. Moreover, the spaces $H^p$ endowed with this norm are Banach spaces.
     
    \end{thm}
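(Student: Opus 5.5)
The proof has two parts. First we check that $\|\cdot\|_{H^p}$ is a norm. Then we show completeness by the standard argument: a Cauchy sequence converges pointwise, and the limit inherits both holomorphicity and the norm bound.

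For the norm axioms, note that for each fixed $n$ the map $f\mapsto\|f(\cdot+in)\|_{\ell^p}$ is a seminorm on $\operatorname{Hol}(\mathbb Z^2_+)$. A supremum of seminorms is again homogeneous and subadditive: for every $n$,
\[
\|(f+g)(\cdot+in)\|_{\ell^p}\leqslant \|f(\cdot+in)\|_{\ell^p}+\|g(\cdot+in)\|_{\ell^p}\leqslant \|f\|_{H^p}+\|g\|_{H^p},
\]
and we take the supremum over $n$. The space $H^p$ is a vector space because the condition $X_i f=0$ on each square is linear and the supremum is finite for sums. For definiteness, $\|f\|_{H^p}=0$ forces $f(m+in)=0$ for every $m\in\mathbb Z$ and every $n\geqslant 0$, so $f\equiv 0$. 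This uses the full supremum over all rows, not only the boundary identity proved later.

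For completeness, let $(f_k)$ be Cauchy in $H^p$. Since $|f(m+in)|\leqslant \|f(\cdot+in)\|_{\ell^p}\leqslant\|f\|_{H^p}$ (also when $p=\infty$), the sequence is uniformly Cauchy on $\mathbb Z^2_+$, so it converges pointwise to some $f$. Holomorphicity passes to the limit, because $X_i f(z_0)=f_0+if_1-f_2-if_3$ is a finite linear combination of point values, and so $X_if=\lim_k X_if_k=0$ on every square in $\mathbb Z^2_+$.

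For the norm bound, fix $\varepsilon>0$ and choose $K$ with $\|f_k-f_j\|_{H^p}<\varepsilon$ for $k,j\geqslant K$. For any fixed $n$ and any finite set $F\subseteq\mathbb Z$,
\[
\Big(\sum_{m\in F}|f_k(m+in)-f_j(m+in)|^p\Big)^{1/p}<\varepsilon .
\]
Letting $j\to\infty$ (a finite sum, so pointwise convergence suffices) and then taking the supremum over $F$ gives $\|(f_k-f)(\cdot+in)\|_{\ell^p}\leqslant\varepsilon$ for all $n$ and all $k\geqslant K$. The case $p=\infty$ is the same with maxima over $F$ in place of sums. Hence $f_k-f\in H^p$, so $f=f_k-(f_k-f)\in H^p$, and $\|f_k-f\|_{H^p}\leqslant\varepsilon$, i.e. $f_k\to f$ in $H^p$.

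No step is a genuine obstacle; this is a Fatou-type argument. The one point that needs care is making the $\varepsilon$-bound uniform in the row index $n$. This is why we pass to the limit on finite sums before taking either supremum, so that no interchange of a limit with an infinite sum or with $\sup_n$ is needed.
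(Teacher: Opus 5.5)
Your proof is correct and follows the paper's argument. The pointwise bound $|f(m+in)|\leqslant\|f\|_{H^p}$ gives definiteness and pointwise convergence of Cauchy sequences, holomorphicity passes to the limit, and passing to the limit on finite sums before taking suprema yields $\|f_k-f\|_{H^p}\leqslant\varepsilon$ uniformly in $n$. The only cosmetic difference is that you deduce $f\in H^p$ from $f_k-f\in H^p$, whereas the paper first bounds $\|f\|_{H^p}$ by $\sup_k\|f_k\|_{H^p}$ directly.
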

    
    \begin{proof}
    It follows by the definition that 
    \begin{equation}\label{eq:bounded_functional}
    |f(m+in)|\leqslant \|f\|_{H^p},\quad m+in\in\mathbb{Z}^2_+.
    \end{equation}
    This ensures that $\|\cdot\|_{H^p}$ is a norm since $\|f\|_{H^p}=0$ implies $f\equiv 0$. Thus, $H^p$ is a normed vector space. The proof of the completeness is standard. If $\{f_k\}_{k\in\mathbb{N}}$ is a Cauchy sequence in $H^p$, then,  for every $m+in\in\mathbb{Z}^2_+$, the sequence  $\big\{ f_k(m+in)\}_{k\in\mathbb{N}}$ is a Cauchy sequence in $\mathbb{C}$ thanks to \eqref{eq:bounded_functional}.   Hence, the function $f\colon \mathbb{Z}^2_+\to\mathbb{C}$,
     \[
     f(m+in)=\lim_{k\to+\infty} f_k(m+in)
     \]
     is well-defined. Moreover, $f$ is trivially holomorphic. Let now $n,N$ in $\mathbb N$ be fixed and observe that 
     \[
     \sum_{|m|\leqslant N} |f(m+in)|^p = \lim_k \sum_{|m|\leqslant N} |f_k(m+in)|^p \leqslant \limsup_k \|f_k\|_{H^p}^p<\infty,
     \] 
     since $\{f_k\}_{k\in\mathbb N}$ is a Cauchy sequence in $H^p$, hence bounded.
     In particular, 
     \[
     \sup_{n\in\mathbb N}\sum_{m=-\infty}^{+\infty} |f(m+in)|^p \leqslant \sup_k \|f_k\|_{H^p}^p,
     \]
    that is, $f\in H^p$. It only remains to show that  $\lim_k\|f_k-f\|_{H^p}=0.$ Let $\varepsilon>0$ and choose $M\in\mathbb{N}$ such that $\|f_k-f_j\|_{H^p}^p<\varepsilon,$ for $k,j>M$. For $j\geqslant M$ and  for every $n,N$ in $\mathbb{N}$ we have that
     \[
     \sum_{|m|\leqslant N} |f(m+in)-f_j(m+in)|^p = \lim_k \sum_{|m|\leqslant N} |f_k(m+in)-f_j(m+in)|^p \leqslant \limsup_k \|f_k-f_j\|_{H^p}^p<\varepsilon,
     \]
    and this concludes the proof for $1\leqslant p<\infty$. The case $p=\infty$ follows by a similar argument.
     \end{proof}
    
    It is not apparent from the definition that $H^2$ is a Hilbert space. This will follow from the norm identity
    \[
    \|f\|^2_{H^2}=\sum_{m=-\infty}^{+\infty}|f(m)|^2,
    \]
    that will be proved in Theorem \ref{p:H^p_norm_2}.
    
    \subsection{Cauchy reproducing formula for the half-lattice}\label{s:cauchy}
    
    In this section we prove a generalization of Corollary \ref{l:cauchy_formula} for functions in $H^p, 1\leqslant p<\infty$. 
    \begin{prop}\label{p:cauchy_half_plane}
    Let $f$ in $H^p, 1\leqslant p <\infty$, and let $C$ be the kernel defined in \eqref{eq:Cauchy_kernel}. If $\operatorname{Im}(z_0)\geqslant 0$, then
    \begin{equation}\label{eq:cauchy_half_plane}
     f(z_0)=\sum_{j=-\infty}^{+\infty}f(j)C(z_0-j) .  
    \end{equation}
    If $\operatorname{Im}(z_0)<0$, then
    \begin{equation*} 
    \sum_{j=-\infty}^{+\infty}f(j)C(z_0-j)=0.   
    \end{equation*}
    \end{prop}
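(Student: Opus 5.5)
Apply Duffin's Cauchy formula (Corollary~\ref{l:cauchy_formula}) on a large rectangle and let it grow. For $z_0=x_0+iy_0$ with $y_0\geqslant 0$, take $R_{N,M}=[-N,N]\times[0,M]$ with $N,M$ large, so that the square associated with $z_0$ lies in $R_{N,M}$. Expanding \eqref{eq:cauchy} edge by edge, $f(z_0)$ becomes a finite sum of products of values of $f$ on $\partial R_{N,M}$ with values of $q_{z_0}$ at adjacent points. Collect the contribution of the bottom edge $[-N,N]\subseteq\mathbb Z$: each interior boundary point $j$ receives a coefficient combining $q(j-z_0)$, $q(j\pm1-z_0)$ (the $:$, $:'$ and $:''$ integrals). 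Expect this combination to be exactly $C(z_0-j)=q(j-z_0)+iq(1+j-z_0)$, possibly after using $X_iq=\delta_0$ to simplify. This identification is a direct but careful bookkeeping check with the explicit weights $(z_{j+1}-z_{j-1})/4$, $\tfrac12(g_{j+1}-g_{j-1})$, etc., for a horizontal path oriented left to right. Corner terms at $\pm N$ are $O(|f(\pm N)|\,|q|)$ and vanish in the limit.

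Next I show that the contributions of the other three sides vanish as $N,M\to\infty$. Here I use the decay of $q$. From Lemma~\ref{l:kernel_asymptotic} (via \eqref{eq:Cauchy_kernel}, or the analogous estimate for $q$ itself from \eqref{eq:q_explicit}), $|q(z)|\lesssim |z|^{-1}$. On the top side $\operatorname{Im}=M$, Hölder's inequality gives a bound
\[
\|f(\cdot+iM)\|_{\ell^p}\,\|q(\cdot+iM-z_0)\|_{\ell^{p'}}\leqslant \|f\|_{H^p}\, c\, M^{\frac1{p'}-1}\to0
\]
for $p>1$, by the same computation as \eqref{eq:K_norm_p}. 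For $p=1$, use $\sup|q(\cdot+iM-z_0)|\lesssim M^{-1}$ together with the bound $\|f(\cdot+iM)\|_{\ell^1}\leqslant\|f\|_{H^1}$. The vertical sides are the main obstacle, since $f$ is only controlled in horizontal $\ell^p$ norms. On the side $\operatorname{Re}=\pm N$ the terms are $\sum_{n=0}^{M}|f(\pm N+in)|\,|N|^{-1}$-ish. To handle this, I let $M$ be fixed first and send $N\to\infty$: for each fixed $n\leqslant M$, $f(\pm N+in)\to0$ as $N\to\infty$ since $f(\cdot+in)\in\ell^p$. A finite sum of vanishing terms vanishes, so the vertical sides drop out. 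More safely, one can average over $N$ (choose $N$ along a subsequence where $\sum_{n\leqslant M}|f(\pm N+in)|$ is small). Then letting $M\to\infty$ kills the top side, and the bottom sum converges to $\sum_j f(j)C(z_0-j)$ absolutely, because $C(z_0-\cdot)\in\ell^{p'}$ by Lemma~\ref{l:kernel_asymptotic}. Points $z_0$ with $y_0=0$ are covered since their square still lies in $R$.

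For $\operatorname{Im} z_0<0$, the square associated with $z_0$ is outside $R_{N,M}$, so $X_iq_{z_0}$ vanishes on every square of $R$. Proposition~\ref{p:integrals_identities} then gives that the three contour integrals are $0$ instead of multiples of $X_\varepsilon f(z_0)$. The same boundary bookkeeping and limiting argument then yield $\sum_j f(j)C(z_0-j)=0$. One caveat: the square of $z_0$ with $y_0=-1$ touches the bottom row, but lies below $R$, so it is still excluded. Alternatively, this case follows from the first via the symmetry \eqref{eq:kernel_negative} if needed.
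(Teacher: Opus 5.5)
Your proposal is correct and follows the paper's proof essentially step for step. The shared steps are: the same rectangle; the bottom-edge coefficient $q_{z_0}(j)+iq_{z_0}(j+1)=C(z_0-j)$, which in fact comes out directly from the weights without using $X_iq=\delta_0$; the vertical sides and corners removed by sending the width to infinity at fixed height; the top side killed by H\"older and Lemma~\ref{l:kernel_asymptotic}; and the vanishing of the three contour integrals when $\operatorname{Im}z_0<0$. The only difference is cosmetic: you invoke a separate bound $|q(z)|\lesssim|z|^{-1}$, which is true but is not established in the paper and does not follow immediately from \eqref{eq:Cauchy_kernel}. You can avoid it, as the paper does, by running the same bookkeeping on the top edge traversed right to left, which gives the exact coefficient $-C(z_0-j-iM)$, so that Lemma~\ref{l:kernel_asymptotic} together with \eqref{eq:kernel_negative} applies directly.
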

        
    \begin{proof}
     Given $M,N\in\mathbb{N}$, consider the rectangle $R_{M,N}$ having as vertices the points $-M, M+iN,-M+iN$ and $-M$. If $\operatorname{Im}z_0\geqslant0$, for $M,N$ big enough so that the square associated to $z_0$ is contained in $R_{M,N}$, equation \eqref{eq:cauchy} gives
    \[
    f(z_0)=\int_{\partial R_{M,N}} f:\big((1+i)q_{z_0}+iq_{z_0}'-\frac{1-i}{4}q_{z_0}''\big) .  
    \]
    This line integral can be decomposed into the sum of four terms, each one corresponding to an edge of $R_{M,N}$. From the term corresponding to the edge of vertices $-M$ and $M$, excluding the two endpoints,  we get
    \begin{align}\label{eq:lower_edge}
        \sum_{j=-M+1}^{M-1} f(j) q_{z_0,A}(j),
    \end{align}
    where
    \begin{align*}
        q_{z_0,A}(j)&:=(1+i)\frac{q_{z_0}(j-1)+2q_{z_0}(j)+q_{z_0}(j+1)}{4} +i\frac{q_{z_0}(j+1)-q_{z_0}(j-1)}{2} +\\
        &\qquad -\frac{1-i}{4}\big( (q_{z_0}(j+1)-q_{z_0}(j)) -(q_{z_0}(j)-q_{z_0}(j-1))\big)\\
        &=q_{z_0}(j)+iq_{z_0}(j+1)\\
        &=C(z_0-j).
    \end{align*}
    Similarly, from the term corresponding to the edge with vertices $-M+iN$ and $M+iN$, excluding the two end-points,  we get
    \begin{align}\label{eq:upper_edge}
        \sum_{j=-M+1}^{M-1} f(j+iN) q_{z_0,B}(j),
    \end{align}
    where
    \begin{align*}
         q_{z_0,B}(j):=
        -q_{z_0}(j+iN)-iq_{z_0}(j+1+iN)=-C(z_0-j-iN).
    \end{align*}
    The terms corresponding to the vertical edges (end-points excluded) of $R_{M,N}$ are linear combinations of the form
    \begin{align}\label{eq:vertical_edge}
        \sum_{j=1}^{N-1} f(\pm M+ij) \big(\alpha q_{z_0}(\pm M+ij-i)+\beta q_{z_0}(\pm M+ij)+\gamma q_{z_0}(\pm M+ij+1)\big),
    \end{align}
    where $\alpha,\beta,\gamma\in\mathbb{C}$.  We now want to take the limit as $M\to+\infty$ in \eqref{eq:lower_edge}, \eqref{eq:upper_edge} and \eqref{eq:vertical_edge}.

    Notice that, if $z_0=x_0+iy_0$, by H\"older's inequality and Lemma \ref{l:kernel_asymptotic}, for $1\leqslant p<\infty$, we get
    \begin{align}\label{eq:upper_edge_2}
    \begin{split}
     \Big|\sum_{j=-\infty}^{+\infty}f(j+iN)q_{z_0,B}(j)\Big|&= \lim_{M\to+\infty}\Big|\sum_{j=-M+1}^{M-1}f(j+iN)C(z_0-j-iN)\Big|\\
     &\leqslant c \cdot (N-y_0)^{\frac 1{p'}-1}\|f\|_{H^p},
     \end{split}
    \end{align}
    where $p'$ is the conjugate exponent of $p$.
    Similarly, Lemma \ref{l:kernel_asymptotic} guarantees that 
    \begin{equation}\label{eq:lower_edge_limit}
    \sum_{j=-\infty}^{+\infty}f(j)q_{z_0,A}(j)=\lim_{M\to+\infty} \sum_{j=-M+1}^{M-1} f(j)C(z_0-j)
    \end{equation}
    is well-defined as well if $y_0>0$. If $y_0=0$ we need to observe that, by Lemma \ref{l:C_expansion},
    \begin{equation*}
    C(m)=
    \begin{cases}
    \frac12, & m=0,\\[4pt]
    0, & m\in 2\mathbb Z\setminus\{0\},\\[4pt]
    \dfrac{i}{\pi m}, & m\in 2\mathbb Z+1.
    \end{cases}
    \end{equation*}
    This is still enough to guarantee that \eqref{eq:lower_edge_limit} is well-defined. About the vertical contributions in \eqref{eq:vertical_edge}, notice that they tend to $0$ as $M\to+\infty$ since they are given by finite sums and both $f$ and $q$ are null sequences along horizontal lines.  Similarly, for fixed $N$, the contributions of the four vertices of the rectangle $R_{M,N}$ tend to $0$ as $M\to+\infty$. Hence, so far we proved the identity
    \[
    f(z_0) =\sum_{j=-\infty}^{+\infty} f(j)C(z_0-j)-\sum_{j=-\infty}^{+\infty} f(j+iN)C(z_0-j-iN), \qquad N> \operatorname{Im}(z_0).
    \] 
    To conclude it is now sufficient to take the limit as $N\to+\infty$ and consider the estimate \eqref{eq:upper_edge_2}.

    Finally, notice that if $\operatorname{Im}(z_0)<0$, then for every $M,N$
    \[
    \int_{\partial R_{M,N}} f:\big((1+i)q_{z_0}+iq_{z_0}'-\frac{1-i}{4}q_{z_0}''\big) =0,
    \]
    by the holomorphicity of $f$, the definition of $q_{z_0}$ and Propositions \ref{p:integrals_identities} and \ref{p:integrals_q}. Therefore, with the same argument as above, it can be shown that
    \[
    \sum_{j=-\infty}^{\infty} f(j)C(z_0-j)=0, \qquad \operatorname{Im}(z_0)<0.
    \]
    \end{proof}
    
    \subsection{Poisson reproducing formula for the half-lattice}
    
      From this point on, we refer to the kernel $C$ as the \emph{Cauchy kernel} in our setting, as it plays a role analogous to that of its classical continuous counterpart. The goal of this section is to show that the reproducing formula \eqref{eq:cauchy_half_plane} remains valid when the Cauchy kernel $C$ is replaced, essentially, by its real part. The point of passing to the real part is that it produces a positive,
    $\ell^1$-normalized Poisson-type kernel. This is the ingredient that will later
    imply the monotonicity of the horizontal $\ell^p$-norms and hence the boundary
    norm identity.

      By the elementary symmetry
    \begin{equation}\label{Eq:symmetryphi}
    \frac{\cos (\pi-t)}{1+\sin (\pi-t)}=-\frac{\cos t}{1+\sin t}, \qquad -\frac{\pi}{2}<t<\frac{3}{2}\pi,
    \end{equation}
    
    it follows that
    \begin{align*}
      C(m+in)&=\frac{1}{2\pi}\int_0^\pi e_t(m+in)\operatorname{d}\!t=\frac{1}{2\pi}\int_0^{\pi} e^{imt}\Big(\frac{\cos t}{1+\sin t}\Big)^n\operatorname{d}\!t\\
      &=\frac{1}{2\pi}\int_0^{\frac\pi 2}(e^{imt}+(-1)^{m+n}e^{-imt})\Big(\frac{\cos t}{1+\sin t}\Big)^n\operatorname{d}\!t\\
      &=\frac{1}{2\pi}\int_0^{\frac{\pi}{2}}(e_t(m+in)+(-1)^{m+n}e_t(-m+in))\operatorname{d}\!t.
      \end{align*}
    In particular,
    \begin{equation}\label{eq:poisson_kernel}
     \operatorname{Re}(C(m+in))=\begin{cases}
          \dfrac{1}{\pi}\displaystyle\int_0^{\frac\pi 2} \cos(mt)\Big(\dfrac{\cos t}{1+\sin t}\Big)^n\,\mathrm{d}t,
          & m+n\in 2\mathbb Z,\\
          0, & m+n\in 2\mathbb Z+1,
      \end{cases}  
    \end{equation}
    and 
    \begin{equation}\label{eq:poisson_conjugate}
    \operatorname{Im}(C(m+in))=\begin{cases}
        0, & m+n\in 2\mathbb Z,\\
        \dfrac{1}{\pi}\displaystyle\int_0^{\frac\pi 2} \sin(mt)\Big(\dfrac{\cos t}{1+\sin t}\Big)^n\,\mathrm{d}t,
            & m+n\in 2\mathbb Z+1.
    \end{cases}    
    \end{equation}
    Moreover, by Lemma \ref{l:kernel_asymptotic}, for $n\geqslant 1$ we have
    \begin{equation*}
     \operatorname{Re}(C(m+in))=\frac{1}{\pi} \frac{n}{m^2+n^2}+O\Big(\frac{1}{(m^2+n^2)^{\frac 32}}\Big),\qquad m+n\in 2\mathbb Z,
    \end{equation*}
    and
    \begin{equation*}
    \operatorname{Im}(C(m+in))= \frac{1}{\pi}\frac{m}{m^2+n^2}+O\Big(\frac{1}{(m^2+n^2)^{\frac 32}}\Big), \qquad m+n\in 2\mathbb Z+1.
    \end{equation*}
    The real and imaginary parts of \(C\) will play the role of the Poisson and
    conjugate Poisson kernels in the sequel. Indeed, we have the following result.
    
    \begin{prop} \label{p:reproducing_poisson}
    Set 
    \[
    P_n(m)=2\operatorname{Re}(C(m+in)),\qquad  m+in\in\mathbb Z^2_+.
    \]
    For every $f\in H^p$, $1\leqslant p<\infty$, we have
    \[
    f(m+in)= \sum_{j=-\infty}^{+\infty}f(j)P_n(m-j), \qquad m+in\in\mathbb{Z}^2_+.
    \]
    \end{prop}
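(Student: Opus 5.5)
We combine the two halves of Proposition~\ref{p:cauchy_half_plane}, using the reflection identity \eqref{eq:kernel_negative}. Fix $z_0=m+in$ with $n\geqslant 0$ and set $\bar z_0=m-in$. For $n\geqslant 1$ we have $\operatorname{Im}\bar z_0<0$, so Proposition~\ref{p:cauchy_half_plane} gives
\[
f(z_0)=\sum_{j}f(j)C(m-j+in),\qquad 0=\sum_j f(j)C(m-j-in).
\]
By \eqref{eq:kernel_negative}, $C(m-j-in)=(-1)^{m-j+n+1}C(m-j+in)$. Hence
\[
\sum_j f(j)(-1)^{m-j+n}C(m-j+in)=0 .
\]
Adding this to the Cauchy formula gives
\[
f(z_0)=\sum_j f(j)\bigl(1+(-1)^{m-j+n}\bigr)C(m-j+in).
\]

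By \eqref{eq:poisson_kernel} and \eqref{eq:poisson_conjugate}, $C(k+in)$ is real when $k+n$ is even and purely imaginary when $k+n$ is odd. With $k=m-j$, the factor $1+(-1)^{k+n}$ equals $2$ when $k+n$ is even, where $C=\operatorname{Re}C$. It equals $0$ when $k+n$ is odd, where $\operatorname{Re}C=0$ anyway. So in every case $(1+(-1)^{k+n})C(k+in)=2\operatorname{Re}C(k+in)=P_n(k)$, which is the claimed formula.

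The case $n=0$ has to be treated separately, since $\bar z_0=z_0$ is no longer in the lower half-lattice. Here I will use the explicit boundary values of $C$ listed in the proof of Proposition~\ref{p:cauchy_half_plane}. These give $P_0(0)=1$, $P_0(k)=0$ for $k$ even and nonzero, and $P_0(k)=2\operatorname{Re}(i/(\pi k))=0$ for $k$ odd. Thus $P_0=\delta_0$, and the formula reduces to $f(m)=f(m)$.

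The only delicate point is convergence, because the two series are added termwise. Both series converge as symmetric limits over $|j|\leqslant M$, by Proposition~\ref{p:cauchy_half_plane}, so their sum is again such a limit. Moreover, the asymptotics of $\operatorname{Re}C$, namely $P_n(k)\sim \frac{2}{\pi}\frac{n}{k^2+n^2}$ for $n\geqslant1$, show that $P_n\in\ell^1\cap\ell^{p'}$. The resulting series therefore converges absolutely by H\"older's inequality.
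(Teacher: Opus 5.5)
Your argument is correct, but it takes a different route from the paper's proof.

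\textbf{The paper's route.} The paper never uses the vanishing half of Proposition~\ref{p:cauchy_half_plane}. It defines $G(m+in)=\sum_j f(j)P_n(m-j)$ and checks that $G$ is harmonic on the diagonal sublattices. It then uses the reproducing half of the Cauchy formula, together with $|C-2\operatorname{Re}C|=|C|$ and the bound \eqref{eq:K_norm_p}, to show that $U=f-G$ vanishes on $\mathbb Z$ and tends to $0$ at infinity. It concludes $U\equiv 0$ by the maximum-principle Lemma~\ref{l:uniqueness_harmonic}.

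\textbf{Your route.} You add the reproducing formula at $z_0$ to the vanishing formula at $\bar z_0$, via the reflection identity \eqref{eq:kernel_negative}. In effect you prove
\[
P_n(k)=C(k+in)-C(k-in), \qquad n\geqslant 0 .
\]
This is the exact discrete analogue of the classical derivation of the Poisson kernel as the Cauchy kernel minus its reflection across the real axis.

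\textbf{Comparison.} Your argument is shorter and purely algebraic: it needs no harmonicity, no maximum principle, and no decay analysis of $U$. Its cost is reliance on two ingredients the paper's proof avoids:
\begin{itemize}
\item the vanishing half of Proposition~\ref{p:cauchy_half_plane}, which the paper justifies only by ``the same argument as above'';
\item the full-lattice symmetry \eqref{eq:kernel_negative}, which comes from Duffin's computation of $C$ on negative heights.
\end{itemize}
The paper's approach, in exchange, shows directly that $f*P_n$ is the decaying harmonic extension of the boundary data. It also builds the uniqueness lemma that is reused in the discussion of the discrete Dirichlet problem.

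\textbf{Minor points.} Your separate treatment of $n=0$ via $P_0=\delta_0$ is fine. So is the absolute convergence via $P_n\in\ell^1\cap\ell^{p'}$ for $n\geqslant1$. The asymptotic $P_n(k)\sim\frac{2}{\pi}\frac{n}{k^2+n^2}$ holds only on the parity class $k+n$ even, since $P_n$ vanishes on the other class; that is all you need.
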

    
    To prove this Poisson reproducing formula we need the following  uniqueness principle for functions that are harmonic in the sense  of Section \ref{s:harmonicity}. 
    \begin{lem}\label{l:uniqueness_harmonic}
    Let $U$ be a harmonic  function in $\mathbb Z^2_+$, in the sense of Section \ref{s:harmonicity}. Assume that
    $
    U|_{\mathbb Z}\equiv 0$ and $ \lim_{|z|\to\infty}|U(z)|=0$. Then $U\equiv 0$ in $\mathbb Z^2_+$.  
    \end{lem}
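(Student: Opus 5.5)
The plan is to prove the lemma by a maximum principle on the diagonal sublattices. Harmonicity in the sense of Section \ref{s:harmonicity} means that, at every point $z_0=m+in$ with $n\geqslant1$, we have $\Delta_{\mathrm{diag}}U(z_0)=0$. Indeed, the four points $z_0\pm1\pm i$ all lie in $\mathbb Z^2_+$ as soon as $n\geqslant 1$. Since $\Delta_{\mathrm{diag}}$ has real coefficients, I will treat $\operatorname{Re}U$ and $\operatorname{Im}U$ separately, and I will treat the even and odd sublattices $\mathbb Z^2_{\mathrm{even}}\cap\mathbb Z^2_+$ and $\mathbb Z^2_{\mathrm{odd}}\cap\mathbb Z^2_+$ separately. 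So it suffices to show the following: if $u$ is real-valued, satisfies the mean value property $u(z)=\frac14\sum_{\pm,\pm}u(z\pm1\pm i)$ at every point of one parity class with positive imaginary part, vanishes on the real axis, and tends to $0$ at infinity, then $u\equiv0$ on that class.

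Fix one parity class $L$ and suppose, by contradiction, that $\sup_L u>0$; the case of a negative infimum follows by replacing $u$ with $-u$. Since $u\to0$ at infinity, there is $R>0$ with $|u(z)|<\frac12\sup_L u$ for $|z|>R$. Hence the supremum is attained at some point $z^*\in L$ with $|z^*|\leqslant R$, a finite set. The point $z^*$ cannot lie on the real axis, because $u=0$ there. Therefore $\operatorname{Im}z^*\geqslant1$, and the mean value property applies at $z^*$. Since $u(z^*)$ is the average of four values each at most $u(z^*)$, all four neighbours $z^*\pm1\pm i$ also attain the maximum.

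I then propagate this along a descending path. In particular $z^*-1-i$ attains the maximum and has imaginary part $\operatorname{Im}z^*-1$. Iterating, after $\operatorname{Im}z^*$ steps I reach a point of $L$ on the real axis where $u$ equals $\sup_L u>0$, contradicting $u|_{\mathbb Z}\equiv0$. Note that $z-1-i$ stays in the same parity class, so $L$ is preserved throughout. This shows $u\leqslant 0$, and by symmetry $u\equiv0$ on $L$. Applying this to both parity classes and to both real and imaginary parts gives $U\equiv0$.

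The only delicate point is ensuring that the maximum is attained, rather than merely approached. This is exactly where the hypothesis $\lim_{|z|\to\infty}|U(z)|=0$ enters: it localizes the supremum to a finite set. The other point to watch is that the harmonicity hypothesis really holds at all points of height $n\geqslant1$. This is the case because $U$ is defined on all of $\mathbb Z^2_+$, including the row $n=0$, which supplies the lower neighbours.
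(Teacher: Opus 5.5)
Your proof is correct and follows essentially the same route as the paper: a discrete maximum principle in which the vanishing on $\mathbb Z$ and the decay at infinity together force $U\equiv0$. The only difference is organizational. The paper applies the maximum principle to $|U|$ on the exhausting squares $Q_M$ and then lets $\varepsilon\to0$, whereas you use the decay to show that a global extremum of $\operatorname{Re}U$ or $\operatorname{Im}U$ on a parity class is attained and then push it down the diagonal $z^*-k(1+i)$ to the real axis, which also supplies the maximum principle that the paper calls straightforward.
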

    
    \begin{proof}
    By definition of discrete harmonicity, it is straightforward to see that, given the square~$Q_M$ of vertices $M, M+2iM, -M+2iM, -M$, it holds
    \[
    \max_{Q_M}|U|=\max_{\partial Q_{M}}|U|.
    \]
    Let $\varepsilon>0$ be fixed and let  $z_0$ be in $\mathbb Z^2_+$. The hypothesis guarantees that we can choose $M>0$ large enough so that $z_0\in Q_M$ and $|U(z)|<\varepsilon$ for every $|z|\geqslant M$. In particular,
    \[
    |U(z_0)|\leqslant \max_{Q_M} |U|=\max_{\partial Q_M}|U| \leqslant \varepsilon,
    \]
    since by assumption $U(m)=0$ for every real $m$ and every other point $z$ in $\partial Q_M$  satisfies $|z|\geqslant M$. Since this holds for every $\varepsilon>0$ and $z_0\in\mathbb{Z}^2_+$, we conclude that $U\equiv 0$ on $\mathbb{Z}^2_+$.
    \end{proof}
    \noindent Let us now prove the proposition.
    \begin{proof}[Proof of Proposition \ref{p:reproducing_poisson}]  From \eqref{eq:poisson_kernel} we get $P_0=\delta_0$. Hence, the identity
        \[
        f(m)=\sum_{j=-\infty}^{+\infty}f(j)P_0(m-j), \qquad m\in\mathbb{Z},
        \]
     is trivially true.    
     Define 
     \[
     G(m+in)=\sum_{j=-\infty}^{+\infty}f(j)P_n(m-j), \qquad m+in\in\mathbb{Z}^2_+.
     \]
     Such function is well-defined and harmonic in $\mathbb Z^2_+$. The harmonicity is guaranteed by the harmonicity of $P_n(m)$, whereas the convergence of the series follows since, if $1\leqslant p<\infty$,
     \[
     \sum_{j=-\infty}^{+\infty}|f(j)P_n(m-j)|\leqslant \|f\|_{H^p}\|P_n\|_{\ell^{p'}}\leqslant c n^{\frac1{p'}-1}\|f\|_{H^p}.
     \]
     Now, the function 
     \[
     U(m+in)=f(m+in)-G(m+in), \qquad m+in\in \mathbb Z^2_+,
     \]
     vanishes on $\mathbb Z$ and 
     is harmonic on $\mathbb Z^2_+$. We show that $\lim_{|z|\to\infty}|U(z)|=0$, so that we can apply Lemma \ref{l:uniqueness_harmonic}. By Proposition \ref{p:cauchy_half_plane} and Lemma \ref{l:C_expansion}, we have that
      \begin{align}\label{eq:U_vertically}
      \begin{split}
          |U(m+in)|&\leqslant \sum_{j=-\infty}^{+\infty}|f(j)||C(m+in-j)-P_n(m-j)|\\
    &=\sum_{j=-\infty}^{+\infty}|f(j)||C(m+in-j)|    \leqslant c  n^{\frac 1{p'}-1}\|f\|_{H^p}, 
    \end{split}
     \end{align}
     uniformly for $m\in\mathbb{Z}$, where $p'$ is the conjugate exponent ot $p$. Let now $\varepsilon>0$ be fixed. Let $N_\varepsilon$ be such that 
    \[
    \sup_{m\in\mathbb{Z}}|U(m+in)|\leqslant c n^{\frac 1{p'}-1}\|f\|_{H^p}<\varepsilon,\quad n\geqslant N_\varepsilon.
    \]
     If $n< N_\varepsilon$, notice that
    \begin{equation}\label{eq:U_horizontally}
     \lim_{m\to+\infty}U(m+in)=0
     \end{equation}
     since both $f$ and $G$ are null sequences horizontally. Thus, we can find $M_\varepsilon$ such that 
     \[
     |U(m+in)|\leqslant \varepsilon,\qquad  |m|\geqslant M_\varepsilon,\quad 0\leqslant n<N_\varepsilon.
     \]
     In conclusion, by \eqref{eq:U_vertically} and \eqref{eq:U_horizontally}
    \[
    |U(m+in)|\leqslant \varepsilon,\qquad |m+in|\geqslant \sqrt{M^2_\varepsilon+N^2_\varepsilon}.
    \]
    Hence, we can apply Lemma \ref{l:uniqueness_harmonic} to $U$ and conclude that 
    \[
    f(m+in)=\sum_{j=-\infty}^{+\infty} f(j) P_n(m-j), \quad m+in\in\mathbb Z^2_+,
    \]
    as we wished to show.
    \end{proof}
    The reproducing formula we just proved will be fundamental in proving that 
    \begin{equation}\label{eq:H^p_norm}
    \|f\|^p_{H^p}=\sum_{m=-\infty}^{+\infty}|f(m)|^p.
    \end{equation}
    However, we still need two preliminary results.

    \begin{lem}\label{l:Poisson_non_negative}
    For every $m+i n\in\mathbb Z^2_+$ we have
     $P_n(m)\geqslant 0$. 
    \end{lem}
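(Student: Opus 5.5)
The plan is to avoid any direct estimate of the oscillatory integral in \eqref{eq:poisson_kernel}. Instead, we use a minimum principle for discrete harmonic functions, modelled on the proof of Lemma~\ref{l:uniqueness_harmonic}. Consider the real-valued function $V(m+in):=P_n(m)=2\operatorname{Re}C(m+in)$ on $\mathbb Z^2_+$. We will check three facts: $V$ is nonnegative on the boundary row, $V$ is harmonic in the sense of Section~\ref{s:harmonicity}, and $V$ tends to $0$ at infinity. The minimum principle then forces $V\geqslant 0$ everywhere.

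\textbf{Boundary values, harmonicity and decay.} On the boundary, \eqref{eq:poisson_kernel} with $n=0$ gives $P_0=\delta_0$, as already observed in the proof of Proposition~\ref{p:reproducing_poisson}. Hence $V\geqslant0$ on $\mathbb Z$.

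For harmonicity, Lemma~\ref{l:C_expansion}$(i)$ says that $C$ is discrete holomorphic on every unit square with south-west vertex in $\mathbb Z^2_+$. For any $z_0$ with $\operatorname{Im}z_0\geqslant1$, the four squares used in Section~\ref{s:harmonicity} to derive $\Delta_{\mathrm{diag}}C(z_0)=0$ have south-west vertices at heights $\operatorname{Im}z_0-1\geqslant0$ and $\operatorname{Im}z_0$. So $\Delta_{\mathrm{diag}}C(z_0)=0$. Since $\Delta_{\mathrm{diag}}$ has real coefficients, also $\Delta_{\mathrm{diag}}V(z_0)=0$ at every such $z_0$.

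For decay, Lemma~\ref{l:kernel_asymptotic} gives $|C(z)|\leqslant c|z|^{-1}$. Hence $V(z)\to0$ as $|z|\to\infty$ in $\mathbb Z^2_+$.

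\textbf{Minimum principle.} Fix $z_0\in\mathbb Z^2_+$ and $\varepsilon>0$, and take the square $Q_M$ with vertices $M$, $M+2iM$, $-M+2iM$, $-M$, as in the proof of Lemma~\ref{l:uniqueness_harmonic}. Choose $M$ so large that $z_0\in Q_M$ and $|V(z)|<\varepsilon$ for $|z|\geqslant M$. The mean value identity $V(z)=\frac14\sum V(z\pm1\pm i)$ holds at every point of $Q_M$ off $\partial Q_M$. Therefore the minimum of $V$ over $Q_M$, taken within the parity class of $z_0$, is attained on $\partial Q_M$. On the bottom edge $V=P_0\geqslant0$. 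Every other boundary point satisfies $|z|\geqslant M$, so $V>-\varepsilon$ there. Thus $V(z_0)\geqslant-\varepsilon$, and letting $\varepsilon\to0$ gives $P_n(m)\geqslant0$. (Points with $m+n$ odd are trivial anyway, since there $V=0$ by \eqref{eq:poisson_kernel}.)

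\textbf{Main obstacle.} The only delicate point is making the minimum principle rigorous on the diagonal sublattices. Each parity class of $Q_M$ must be treated separately. One must check that an interior minimum propagates along diagonal steps until it reaches $\partial Q_M$, which is a connectedness argument for each sublattice inside $Q_M$. One must also check that the diagonal neighbours of a non-boundary point of $Q_M$ stay in $Q_M\cap\mathbb Z^2_+$, so that harmonicity is never invoked at height $0$. Both points are exactly what is already used in Lemma~\ref{l:uniqueness_harmonic}, so I expect to reuse that argument essentially verbatim with $\max$ replaced by $\min$.
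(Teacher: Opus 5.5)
Your proposal is correct and follows essentially the same route as the paper: a minimum principle for the real harmonic function $P_n(m)=2\operatorname{Re}C(m+in)$ on the squares with vertices $-N,N,N+2iN,-N+2iN$, using $P_0=\delta_0\geqslant0$ on the bottom edge and the decay of $C$ at infinity on the other three edges. The only difference is cosmetic: you conclude with an $\varepsilon$-argument, whereas the paper splits into the case where the boundary minimum is attained on the bottom edge and the case where it tends to $0$. Your check that harmonicity is only needed at heights $n\geqslant1$, where the four relevant squares have south-west vertices in $\mathbb Z^2_+$, is a useful detail that the paper leaves implicit.
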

    
    \begin{proof}
    The proof follows from a minimum principle and a standard argument for real harmonic functions, that are positive on the boundary. For $N\geqslant 0$, let $Q_N$ be the square in the upper half-lattice of vertices $-N, N, N+2iN, -N+2iN$. Since $P_n(m)=2\operatorname{Re}(C(m+in))$ is a real-valued harmonic function in the sense of \ref{s:harmonicity}, it is easy to see that 
    \[
    M_N:=\min_{m+in \in Q_N}P_n(m)=\min_{m+in \in \partial Q_N}P_n(m).
    \]
    Fix $m+in\in\mathbb{Z}^2_+$, and take $N_0$ big enough so that $m+in\in Q_{N_0}.$ By definition of minimum, 
    \begin{equation}\label{eq:harmonic+}
        P_n(m)\geqslant M_N, \qquad N\geqslant N_0.
    \end{equation}
    Since on the boundary $P_0=\delta_0$ is positive, if for some $N$ the minimum $M_N$ is attained on the horizontal edge $[-N,N]$, then $P_n(m)\geqslant 0$. If else for every $N$ the minimum $M_N$ is attained on the other three edges of $Q_N,$ since $\lim_{m^2+n^2\to+\infty}P_n(m)=0$ by Lemma \ref{l:C_expansion}, it follows that $M_N\to 0$ as $N\to+\infty$. Then, taking the limit as $N\to+\infty$ in \eqref{eq:harmonic+}, we conclude that $P_n(m)\geqslant 0.$
    \end{proof}

    \begin{lem}\label{l:P_n_norm_1} For every $n\in\mathbb N$ we have
    \[
    \|P_n\|_{\ell^1}=\sum_{m=-\infty}^{+\infty}P_n(m)=1.
    \]
    \end{lem}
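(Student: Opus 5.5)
The first equality is immediate from Lemma~\ref{l:Poisson_non_negative}: since $P_n(m)\geqslant 0$, we have $\|P_n\|_{\ell^1}=\sum_m P_n(m)$. The work is in showing that this sum equals $1$.

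For $n=0$ we know $P_0=\delta_0$, so the sum is $1$. For $n\geqslant 1$ we will use the explicit formula \eqref{eq:poisson_kernel}, which only involves the parity class $m\equiv n \pmod 2$. Set
\[
\varphi_n(t)=\Big(\frac{\cos t}{1+\sin t}\Big)^n .
\]
Then $P_n(m)=\frac{2}{\pi}\int_0^{\pi/2}\cos(mt)\,\varphi_n(t)\,\mathrm{d}t$ when $m\equiv n \pmod 2$, and $P_n(m)=0$ otherwise. We extend $\varphi_n$ evenly to $[-\pi/2,\pi/2]$. Then
\[
P_n(m)=\frac1\pi\int_{-\pi/2}^{\pi/2}e^{imt}\varphi_n(t)\,\mathrm{d}t ,
\]
so up to normalisation these are Fourier coefficients of a $\pi$-periodic function, sampled on the lattice $m\in n+2\mathbb Z$.

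Define $g_n$ on $[-\pi/2,\pi/2]$ by $g_n(t)=\varphi_n(t)$, and extend it to all of $\mathbb{R}$ with the twisted periodicity $g_n(t+\pi)=(-1)^n g_n(t)$. By the symmetry \eqref{Eq:symmetryphi}, this extension is exactly the function $t\mapsto(\cos t/(1+\sin t))^n$ on $(-\pi/2,3\pi/2)$. Its Fourier series is supported on $m\equiv n \pmod 2$, with coefficients $P_n(m)$. Explicitly, for $m\equiv n$,
\[
\frac{1}{2\pi}\int_{-\pi}^{\pi} g_n(t)e^{-imt}\,\mathrm{d}t=\frac1\pi\int_{-\pi/2}^{\pi/2}\varphi_n(t)e^{-imt}\,\mathrm{d}t=P_n(m),
\]
and the coefficients vanish for the other parity by the twisted periodicity.

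So the claim reduces to evaluating $\sum_m P_n(m)$ as the Fourier series of $g_n$ at $t=0$, where $g_n(0)=\varphi_n(0)=1$. To justify pointwise evaluation we need the series to converge absolutely, which holds because $P_n\in\ell^1$. This follows from the asymptotics $P_n(m)=\frac{2}{\pi}\frac{n}{m^2+n^2}+O\big((m^2+n^2)^{-3/2}\big)$ on the relevant parity class. Absolute convergence makes the series converge uniformly to a continuous function. The function $g_n$ is continuous: at $t=\pm\pi/2$ it vanishes for $n\geqslant1$, since $\cos t/(1+\sin t)\to 0$ at $\pi/2$. The one point needing care is $t\to-\pi/2^+$, where $\cos t/(1+\sin t)=(1-\sin t)/\cos t$ blows up. However, by the even extension $g_n(-\pi/2^+)$ equals $\varphi_n(\pi/2^-)=0$, so $g_n$ is continuous there too. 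Since a continuous function whose Fourier coefficients are absolutely summable equals its Fourier series everywhere, we get
\[
\sum_m P_n(m)=g_n(0)=1 .
\]

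The main obstacle is to state the twisted-periodic (or $\pi$-periodic, after a change of variables) setup cleanly, and to confirm that the Fourier coefficients of $g_n$ vanish off the parity class, so that they match \eqref{eq:poisson_kernel} exactly. A cleaner alternative avoids the twist. Substitute $s=2t$ and write $m=n+2k$. Then $P_n(n+2k)$ is, up to the factor $e^{ins/2}$, a Fourier coefficient of a $2\pi$-periodic continuous function of $s$. Summing over $k$ and evaluating at $s=0$ again gives $\varphi_n(0)=1$.
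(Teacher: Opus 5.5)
Your argument is correct and is essentially the paper's. The paper reads $P_n(m)=\frac1\pi\int_0^\pi\cos(mt)\big(\frac{\cos t}{1+\sin t}\big)^n\,\mathrm{d}t$ directly from the definition of $P_n$, valid for every $m$, so the $P_n(m)$ are the Fourier coefficients of $\big(\frac{\cos|t|}{1+\sin|t|}\big)^n$ on $[-\pi,\pi]$ and the sum is its value $1$ at $t=0$. Your twisted-periodic $g_n$ turns out to be exactly this function, so the detour through the parity-restricted formula is unnecessary. One slip: $g_n$ is not $(\cos t/(1+\sin t))^n$ on $(-\pi/2,0)$ or on $(\pi,3\pi/2)$, because there $\varphi(-t)=1/\varphi(t)$; this claim is harmless, since you never use it and your continuity and coefficient computations correctly rely on the even extension.
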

    
    \begin{proof}
    The first identity follows by the previous lemma. For the second identity we proceed as follows.
     Let $\varphi_n(t)=\Big(\frac{\cos|t|}{1+\sin|t|}\Big)^n$. Then, 
    \begin{align}\label{eq:Poisson_coefficient}
     \frac{1}{\sqrt{2\pi}}(\mathcal F^{-1}\varphi_n)(m)=\frac{1}{2\pi}\int_{-\pi}^{\pi}\varphi_n(t) e^{imt}\operatorname{d}\!t&=\frac{1}{\pi}\int_0^\pi \cos(mt)\Big(\frac{\cos t}{1+\sin t }\Big)^n\operatorname{d}\! t=P_n(m).   
    \end{align}
    Hence,
    \begin{align*}
     \sum_{m=-\infty}^{+\infty}P_n(m)=\frac{1}{\sqrt{2\pi}} \sum_{m=-\infty}^{+\infty}(\mathcal F^{-1}\varphi_n)(m)= \varphi_n(0)=1.   \end{align*}

    The second-last identity holds true thanks to the pointwise convergence of the Fourier series of $\varphi_n$ at $t=0$.
    \end{proof}

    We conclude the section by proving \eqref{eq:H^p_norm}.
    \begin{prop}\label{p:H^p_norm_2} 
    Let $1\leqslant p<\infty$ and $f$ in $H^p$. For every $n$ in $\mathbb N$ we have
    \[
    \sum_{m=-\infty}^{+\infty} |f(m+i(n+1))|^p\leqslant \sum_{m=-\infty}^{+\infty}|f(m+in)|^p.
    \]
    Hence,
    \[
    \|f\|_{H^p}=\Big(\sum_{m=-\infty}^{+\infty}|f(m)|^p\Big)^{\frac 1p}.
    \]
    \end{prop}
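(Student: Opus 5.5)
The plan is to apply the Poisson reproducing formula of Proposition~\ref{p:reproducing_poisson} not to $f$ itself but to the vertically translated function $f_n(z):=f(z+in)$, and then use Jensen/Minkowski with the positive normalized kernel $P_1$. First we check that $f_n\in H^p$. It is discrete holomorphic on $\mathbb Z^2_+$, since translations preserve holomorphicity on unit squares. Moreover,
\[
\sup_{k\geqslant 0}\|f_n(\cdot+ik)\|_{\ell^p}=\sup_{k\geqslant n}\|f(\cdot+ik)\|_{\ell^p}\leqslant\|f\|_{H^p}.
\]
Proposition~\ref{p:reproducing_poisson} with height $1$ then gives
\[
f(m+i(n+1))=f_n(m+i)=\sum_{j=-\infty}^{+\infty} f(j+in)\,P_1(m-j),\qquad m\in\mathbb Z .
\]

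Next we use Lemma~\ref{l:Poisson_non_negative} and Lemma~\ref{l:P_n_norm_1}. They say $P_1\geqslant0$ and $\sum_m P_1(m)=1$, so $P_1$ is a probability distribution on $\mathbb Z$. For $1\leqslant p<\infty$, Jensen's inequality (or Hölder with weights $P_1(m-j)$) gives, pointwise in $m$,
\[
|f(m+i(n+1))|^p\leqslant \sum_{j} |f(j+in)|^p P_1(m-j).
\]
Summing over $m$ and exchanging the order of summation (Tonelli, since all terms are nonnegative) gives
\[
\sum_m |f(m+i(n+1))|^p\leqslant \sum_j |f(j+in)|^p\sum_m P_1(m-j)=\sum_j|f(j+in)|^p .
\]
This is the monotonicity claim. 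Equivalently, Young's inequality $\|g*P_1\|_{\ell^p}\leqslant\|g\|_{\ell^p}\|P_1\|_{\ell^1}$ gives the same bound directly.

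Finally, by induction the sequence $n\mapsto\|f(\cdot+in)\|_{\ell^p}$ is nonincreasing. Its supremum over $n\geqslant0$ is therefore attained at $n=0$, which yields $\|f\|_{H^p}=\|f|_{\mathbb Z}\|_{\ell^p}$.

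The only step requiring care is the first: that the Poisson formula applies at every base height, i.e.\ that the translate $f_n$ is a genuine $H^p$ function on the half-lattice. This follows immediately from the definition, so no real obstacle is expected. Alternatively, one could avoid translation and use a semigroup identity $P_{n+1}=P_n*P_1$, which follows from $\varphi_{n+1}=\varphi_n\varphi_1$ in \eqref{eq:Poisson_coefficient}. The translation argument is more direct.
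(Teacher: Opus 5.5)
Your proposal is correct and essentially matches the paper's proof: apply the Poisson reproducing formula to the vertical translate $f_n$, then use $P_1\geqslant 0$ and $\|P_1\|_{\ell^1}=1$ to get that the horizontal $\ell^p$-norms are nonincreasing. The paper uses Young's inequality $\|f_n*P_1\|_{\ell^p}\leqslant\|f_n\|_{\ell^p}\|P_1\|_{\ell^1}$ where you use Jensen, which is equivalent here, and your explicit check that $f_n\in H^p$ spells out what the paper simply asserts.
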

    
    \begin{proof}
      Set $f_n(z)=f(z+in)$. If $f\in H^p$, then $f_n\in H^p$ as well. Moreover, by Proposition \ref{p:reproducing_poisson} and Lemma \ref{l:Poisson_non_negative}, we get
    \begin{align*}
    \Big(\sum_{m=-\infty}^{+\infty}|f(m+i(n+1))|^p\Big)^{\frac1p}&=\Big(\sum_{m=-\infty}^{+\infty}|f_n(m+i)|^p\Big)^{\frac1p}=\Big(\sum_{m=-\infty}^{+\infty}|(f_n\ast P_1)(m)|^p\Big)^{\frac1p}   \\
    &\leqslant \|f_n\|_{\ell^p}\|P_1\|_{\ell^1}=\Big(\sum_{m=-\infty}^{+\infty}|f(m+in)|^p\Big)^{\frac1p}.   
    \end{align*}
    Thus, the norms $\|f_n\|_{\ell^p}$ are decreasing in $n$. Hence, the conclusion follows.
    \end{proof}
    
    In particular, we have proved the following result. The statement is very basic and expected, but the proof did require a lot of work. 
    
    \begin{thm}\label{t:Hp_boundary_banach}
    The $H^p$ spaces, $1\leqslant p<\infty$,
    are Banach spaces with the norm
    \[
    \|f\|_{H^p}
    =
    \Big(\sum_{m=-\infty}^{+\infty}|f(m)|^p\Big)^{\frac 1p}.
    \]
    \end{thm}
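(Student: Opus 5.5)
This theorem is a direct assembly of the results already established in this section, so the proof is short. The plan is to take the Banach space structure from the first theorem of this section and identify the norm through Proposition~\ref{p:H^p_norm_2}.

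\emph{Step 1: completeness.} By the first theorem of Section~\ref{s:Hp}, $H^p$ with the norm $\|f\|_{H^p}=\sup_{n\geqslant0}\|f(\cdot+in)\|_{\ell^p}$ is a Banach space for every $1\leqslant p\leqslant\infty$.

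\emph{Step 2: identification of the norm.} For $1\leqslant p<\infty$ and $f\in H^p$, Proposition~\ref{p:H^p_norm_2} shows that the sequence $n\mapsto\|f(\cdot+in)\|_{\ell^p}$ is nonincreasing. Its supremum is therefore attained at $n=0$, which gives
\[
\|f\|_{H^p}=\Big(\sum_{m=-\infty}^{+\infty}|f(m)|^p\Big)^{\frac1p}.
\]
This is the same norm as in Step 1, written in a different form. Completeness with respect to it is therefore immediate, and no equivalence-of-norms argument is needed.

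\emph{Where the work lies.} The real content sits in the results invoked in Step 2, all proved earlier in the paper:
\begin{itemize}
\item the Poisson representation $f_n(\cdot+i)=f_n\ast P_1$ from Proposition~\ref{p:reproducing_poisson}, applied to the translates $f_n=f(\cdot+in)\in H^p$;
\item positivity of $P_1$ (Lemma~\ref{l:Poisson_non_negative});
\item the normalization $\|P_1\|_{\ell^1}=1$ (Lemma~\ref{l:P_n_norm_1}).
\end{itemize}
Together with Young's inequality these give the monotonicity. Since all of this is available, no step of the present argument is a genuine obstacle. The only point to record explicitly is that the translate $f_n$ is discrete holomorphic on $\mathbb Z^2_+$ and satisfies $\|f_n\|_{H^p}\leqslant\|f\|_{H^p}$, so that Proposition~\ref{p:reproducing_poisson} applies to it.
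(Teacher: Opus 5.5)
Your proposal is correct and matches the paper, which states this theorem with no separate proof: it follows from the completeness theorem at the start of Section~\ref{s:Hp} together with the monotonicity and norm identity of Proposition~\ref{p:H^p_norm_2}. As you note, the real work lies in that proposition, which uses the Poisson representation applied to vertical translates and the positivity and $\ell^1$-normalization of $P_1$, exactly the ingredients you list.
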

    \begin{rmk}\label{Rmk:kernelsHp}
        Take the kernel $C$ considered in Lemmas \ref{l:C_expansion} and \ref{l:kernel_asymptotic}. As a straightforward consequence of those lemmas, we obtain examples of functions in $H^p$. For $w\in\mathbb{Z}^2_+,$ the function
        \[K_w(z)=C(z-\overline{w}), \qquad z\in\mathbb{Z}^2_+,\]
        belongs to $H^p$, for every $p>1.$
    \end{rmk}
    \begin{rmk}
    From now on, we refer to $P_n$ as the \emph{Poisson kernel} of the discrete upper half-lattice. This is motivated by the following Dirichlet problem in the
    discrete upper half-lattice. Given boundary data \(g:\mathbb Z\to\mathbb C\), we
    look for a function \(u:\mathbb Z^2_+\to\mathbb C\) such that
    \[
    \begin{cases}
    \Delta_{\mathrm{diag}}u(z)=0, & z\in\mathbb Z^2_+,\ \operatorname{Im}z>0,\\[4pt]
    u(m)=g(m), & m\in\mathbb Z,\\[4pt]
    \displaystyle \lim_{\substack{|z|\to\infty\\ z\in\mathbb Z^2_+}} |u(z)|=0.
    \end{cases}
    \]
    For suitable data, for instance \(g\in\ell^p(\mathbb Z)\), \(1\leqslant p<\infty\),
    the solution is given by
    \[
    u(m+in)
    =
    \sum_{j=-\infty}^{+\infty} g(j)P_n(m-j),
    \qquad m+in\in\mathbb Z^2_+.
    \]
    Indeed, \(P_n=2\operatorname{Re}C(\cdot+in)\) is harmonic in the upper
    half-lattice, \(P_0=\delta_0\) on the boundary, and the estimates for \(P_n\)
    ensure the required convergence and decay at infinity. The uniqueness of this
    solution follows from Lemma~\ref{l:uniqueness_harmonic}.
    \end{rmk} 
    
    \section{The space $H^2$ and the Szeg\H o projection} \label{s:H2}
    Thanks to Proposition \ref{p:H^p_norm_2}, the space $H^2$ is a reproducing kernel Hilbert space with respect to the inner product
    \[
    \langle f,g\rangle_{H^2}=\sum_{m=-\infty}^{+\infty}f(m)\overline{g(m)}.
    \]
    
    For each $w=u+iv\in\mathbb Z^2_+$, its reproducing kernel $K_{w}$ in $z=m+in$ is given by \begin{equation}\label{eq:kernel}
       K_{w}(z)=C(z-\overline{w})
       =\frac{1}{2\pi}\int_0^{\pi} e_t((m-u)+i(n+v))\operatorname{d}\!t.
    \end{equation}
    
    Indeed, that $K_w\in H^2$ was already discussed in Remark \ref{Rmk:kernelsHp},
    and thanks to Proposition \ref{p:cauchy_half_plane}, for every $w$ in $\mathbb Z^2_+$, we have
    \[
    \langle f, K_{w}\rangle_{H^2}= \sum_{m=-\infty}^{+\infty}f(m) \overline{K_{w}(m)}=\sum_{m=-\infty}^{+\infty} f(m)C(w-m)=f(w).
    \]
    We now identify $H^2$ with a closed subspace of $\ell^2(\mathbb Z)$. More generally, for every $1\leqslant p<\infty$, let us consider the restriction operator $J: H^p\to \ell^p(\mathbb Z)$ defined by 
    \[
    f \in H^p \mapsto Jf=(f(m))_{m\in\mathbb Z} \in \ell^p(\mathbb Z).
    \]
    Proposition \ref{p:H^p_norm_2}
    guarantees that $J$ acts as an isometry from $H^p$ to $\ell^p(\mathbb Z)$. We can rewrite the reproducing formula \eqref{eq:cauchy_half_plane} on the boundary in terms of the classical convolution product $\ast$ on~$\mathbb Z$:
    \begin{equation}\label{E:reproducingformulaconvolution}
     f(m)=\sum_{j=-\infty}^{+\infty}f(j)C(m-j)=f\ast C(m), \qquad m\in\mathbb{Z},f\in H^p.
    \end{equation}
    From Proposition \ref{p:cauchy_half_plane} we deduce a Paley--Wiener type characterization of the range~$J(H^2)$. 
    \begin{thm}\label{t:H^2_PW}
    The Fourier transform  $\mathcal F$ maps \(J(H^2)\) onto
    \[
    \left\{
    g\in L^2([-\pi,\pi])\colon \operatorname{supp}g\subseteq[0,\pi]
    \right\}.
    \]
    Moreover, for every \(f\in H^2\),
    \begin{equation}\label{eq:F_isometry}
        \|\mathcal F(Jf)\|_{L^2([-\pi,\pi])}
    =\|f\|_{H^2}.
    \end{equation}
    Finally, the orthogonal projection $P\colon\ell^2\to J(H^2)$, that is the Szeg\H o projection, is given by 
    \begin{equation}\label{E:Porthoproj}
        Pf=f\ast C=\mathcal{F}^{-1}\big(\chi_{[0,\pi]}\mathcal{F}f\big),
    \end{equation}
    where $\chi_{[0,\pi]}$ denotes the characteristic function of the interval $[0,\pi]$.
    \end{thm}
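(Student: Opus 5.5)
The plan is to work on the boundary, where $C|_{\mathbb Z}$ is given by Lemma~\ref{l:C_expansion}(ii) with $n=0$. It reads
\[
C(m)=\frac{1}{2\pi}\int_0^\pi e^{imt}\,\mathrm{d}t=\frac{1}{\sqrt{2\pi}}\,\mathcal F^{-1}\big(\tfrac{1}{\sqrt{2\pi}}\chi_{[0,\pi]}\big)(m),
\]
so $\mathcal F C=(2\pi)^{-1/2}\chi_{[0,\pi]}$. This is consistent with the explicit values $C(0)=1/2$, $C(2k)=0$ for $k\neq0$, and $C(m)=i/(\pi m)$ for $m$ odd. Since $C|_{\mathbb Z}\in\ell^2$, the operator $Tf:=f\ast C$ is bounded on $\ell^2(\mathbb Z)$. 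With the normalization fixed above, the convolution theorem gives $\mathcal F(f\ast C)=\sqrt{2\pi}\,\mathcal Ff\cdot\mathcal FC=\chi_{[0,\pi]}\mathcal Ff$. Hence $T=\mathcal F^{-1}\chi_{[0,\pi]}\mathcal F$ is an orthogonal projection on $\ell^2$: it is self-adjoint and idempotent, because multiplication by a characteristic function is. The identity \eqref{eq:F_isometry} is immediate from Plancherel together with Proposition~\ref{p:H^p_norm_2}.

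Next I would show that the range of $T$ equals $J(H^2)$. For the inclusion $J(H^2)\subseteq\operatorname{ran}T$, use \eqref{E:reproducingformulaconvolution}: for $f\in H^2$ we have $Jf=Jf\ast C$, so $\mathcal F(Jf)=\chi_{[0,\pi]}\mathcal F(Jf)$, which is supported in $[0,\pi]$. For the converse inclusion, take $g\in\ell^2$ with $\mathcal Fg$ supported in $[0,\pi]$. Extend $g$ to the half-lattice by
\[
F(z):=\sum_j g(j)C(z-j),\qquad z\in\mathbb Z^2_+.
\]
This series converges absolutely for $\operatorname{Im}z\geq1$ by \eqref{eq:K_norm_p} with $p=2$. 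On the boundary it is a Fourier multiplier, so $F|_{\mathbb Z}=g\ast C=Tg=g$. $F$ is discrete holomorphic on $\mathbb Z^2_+$ because $C$ is (Lemma~\ref{l:C_expansion}(i)), and each square condition $X_iF=0$ is a finite linear combination of absolutely convergent series, which is also fine on the bottom row $n=0,1$ since the row $n=0$ is an $\ell^2$ convolution.

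To show $F\in H^2$, I would compute its horizontal Fourier transform on each row. By Lemma~\ref{l:C_expansion}(ii), $C(\cdot+in)$ has Fourier transform $(2\pi)^{-1/2}\chi_{[0,\pi]}(t)\big(\tfrac{\cos t}{1+\sin t}\big)^n$, whose modulus is at most $(2\pi)^{-1/2}$. Hence $\mathcal F(F(\cdot+in))=\chi_{[0,\pi]}\mathcal Fg\,\phi^n$ with $|\phi|\le1$, so $\|F(\cdot+in)\|_{\ell^2}\le\|g\|_{\ell^2}$ for every $n$. Thus $F\in H^2$ and $JF=g$.

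Since $\operatorname{ran}T=J(H^2)$ and $T$ is an orthogonal projection, $T=P$, which gives \eqref{E:Porthoproj}. The image of $J(H^2)$ under $\mathcal F$ is $\chi_{[0,\pi]}L^2$, which is exactly the stated set. The only delicate point is the bookkeeping of the $\sqrt{2\pi}$ factors in the convolution theorem under the paper's normalization; checking it against $C(0)=1/2$ confirms the multiplier is exactly $\chi_{[0,\pi]}$. Beyond that, the main thing to verify is that the extension $F$ is holomorphic up to the boundary row. This holds because every series involved converges in $\ell^2$ uniformly enough to justify the termwise application of $X_i$.
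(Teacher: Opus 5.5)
Your proposal is correct and follows essentially the paper's proof. One inclusion comes from the boundary reproducing identity $Jf=Jf\ast C$ together with $\mathcal FC=(2\pi)^{-1/2}\chi_{[0,\pi]}$. The other comes from an explicit extension whose rows have Fourier transform $\chi_{[0,\pi]}\mathcal Fg\,\bigl(\frac{\cos t}{1+\sin t}\bigr)^n$; your $\sum_j g(j)C(z-j)$ is the same function as the paper's spectral integral against $e_t$. The projection statement then follows from Plancherel plus self-adjointness and idempotence of the multiplier $\chi_{[0,\pi]}$.

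There are two cosmetic slips. Your displayed formula for $C(m)$ carries a spurious extra factor $(2\pi)^{-1/2}$; the conclusion you state, $\mathcal FC=(2\pi)^{-1/2}\chi_{[0,\pi]}$, is the correct one. Also, the $\ell^2$-boundedness of $f\mapsto f\ast C$ follows from $\mathcal FC\in L^\infty$, not merely from $C\in\ell^2$.
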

    We point out that this Paley--Wiener type characterization is not new; it was
    already proved by Zeilberger and Dym in \cite[Theorem W3]{ZD}. However, their approach
    establishes the spectral characterization first and then uses it to derive the
    reproducing kernel Hilbert space structure of $H^2$. In contrast, our approach
    shows that the reproducing kernel Hilbert space structure of \(H^2\) can be
    obtained intrinsically from discrete holomorphicity and the Cauchy formula,
    without invoking a spectral model. The Paley--Wiener characterization is then
    recovered as an immediate consequence of the Cauchy reproducing formula on the
    boundary.
    
    \begin{proof}[Proof of Theorem \ref{t:H^2_PW}]
    Let $f$ be a function in $H^2$. Since $C\in\ell^2(\mathbb{Z})$, the product $\mathcal{F}(Jf)\cdot \mathcal{F}C$ belongs to $L^1$, and computing its Fourier coefficients one obtains the identity
    \[
    \mathcal{F}^{-1}\big(\mathcal{F}(Jf)\cdot \mathcal{F}C\big)(m)=\frac{1}{\sqrt{2\pi}}\big(Jf\ast C\big)(m), \qquad m\in\mathbb{Z}.
    \]
    Then, by \eqref{E:reproducingformulaconvolution} we get
    \[
    \mathcal F(Jf)= \mathcal F(Jf\ast C)=\sqrt{2\pi}(\mathcal F (Jf))(\mathcal FC).
    \]
    Recall that
    \begin{equation}\label{E:C=F^{-1}}
    C(m)=\frac{1}{2\pi}\int_0^{\pi}e^{imt}\operatorname{d}\!t=\frac{1}{\sqrt{2\pi}}\big(\mathcal{F}^{-1}\chi_{[0,\pi]}\big)(m), \qquad m\in\mathbb{Z}.
    \end{equation}
    Then, 
    \[
    \operatorname{supp}(\mathcal F(Jf)) = \operatorname{supp}\!\big(\mathcal F (Jf)\chi_{[0,\pi]}\big)\subseteq [0,\pi],\qquad f\in H^2.
    \]
    Conversely, let \(g\in L^2([-\pi,\pi])\) be supported in \([0,\pi]\), and define
    \begin{equation}\label{eq:f_representation_spectral}
    f(m+in)
    =\frac{1}{\sqrt{2\pi}}\int_0^\pi
    g(t)e_t(m+in)
    \operatorname{d}\!t=
    \frac{1}{\sqrt{2\pi}}\int_0^\pi
    g(t)e^{imt}
    \left(\frac{\cos t}{1+\sin t}\right)^n
    \operatorname{d}\!t.
    \end{equation}
    This function $f$ is discrete holomorphic in $\mathbb Z^2_+$.  
    Moreover, by Plancherel,
    \[
    \|f(\cdot+in)\|_{\ell^2}^2
    =
    \int_0^\pi
    |g(t)|^2
    \left|
    \frac{\cos t}{1+\sin t}
    \right|^{2n}
    \operatorname{d}\!t
    \leqslant
    \int_0^\pi |g(t)|^2\operatorname{d}\!t ,\qquad n\geqslant 0.
    \]
    Thus $f\in H^2$ and $\mathcal F(Jf)=g$.
    This proves the reverse inclusion. The norm identity \eqref{eq:F_isometry} is just Plancherel's formula. Finally, we prove that the orthogonal projection $P$ is given by \eqref{E:Porthoproj}. The second identity follows at once from \eqref{E:C=F^{-1}}. That the range $P(\ell^2)$ coincides with $J(H^2)$ is a consequence of the Paley--Wiener characterization, and that $Pf=f$ for $f$ in $J(H^2)$ follows from the reproducing formula \eqref{E:reproducingformulaconvolution}. Finally, since $\mathcal{F}^*=\mathcal{F}^{-1}$, it is straightforward that $P^*=P$.
    \end{proof}

    \begin{rmk}  
    We call the orthogonal projection \(
    P:\ell^2(\mathbb Z)\to J(H^2)
    \) the \emph{Szeg\H o projection} of~$H^2$. We can rewrite the convolution on the boundary with the Cauchy kernel as
    \[
    Pf(m)
    =
    \sum_{j=-\infty}^{+\infty}f(m-j)C(j)=\frac{1}{2}\Big(\sum_{j=-\infty}^{+\infty}f(m-j)P_0(j)+i\sum_{j=-\infty}^{+\infty}f(m-j)Q_0(j)\Big),
    \]
    where $Q_0$ is the \emph{conjugate Poisson kernel}, i.e., $Q_n(m)=2\operatorname{Im}(C(m+in))$. We denote
    \[
    \mathcal H_{\operatorname{odd}}f(m)=f\ast Q_0(m)=\frac{2}{\pi}
    \sum_{j\in 2\mathbb Z+1}
    \frac{f(m-j)}{j}, \qquad m\in\mathbb Z.
    \]
    The second equality follows from \eqref{eq:poisson_conjugate}. Notice that $\mathcal H_{\operatorname{odd}}$  is a discrete Hilbert transform-type operator.
    Since $P_0=\delta_0$, we have the discrete analogue of the classical identity
    \begin{equation}\label{Eq:P_Hilbert}    
    P=\frac{1}{2}(\operatorname{ Id}+i\mathcal H_{\operatorname{odd}}).
    \end{equation}
        In particular, for $f\in J(H^2)$, by \eqref{Eq:P_Hilbert}, we obtain
    \begin{equation}\label{eq_poisson_convoluzione}
    f(m)=Pf(m)=\frac{2i}{\pi}\sum_{j\in2\mathbb Z+1}\frac{f(m-j)}{j}.
    \end{equation}
    This identity shows that, on $J(H^2)$, the values on even integers determine the values on odd integers, and vice versa. Therefore $\mathbb Z_{\mathrm{even}}$ and $\mathbb Z_{\mathrm{odd}}$ are uniqueness sets for $H^2$. Namely, if
    $f|_{\mathbb Z_{\operatorname{even}}}=0$ or 
    $f|_{\mathbb Z_{\operatorname{odd}}}=0$
    then $f\equiv 0$ on $\mathbb Z$ and, consequently, by Theorem \ref{t:Hp_boundary_banach}, $f\equiv 0$ on $\mathbb Z^2_+$. We can actually generalize the results to every horizontal line. Indeed, if $f\in H^2$
    and $n\geqslant 0$, then the vertical translate
    $
    f_n(z)=f(z+in)
    $
    also belongs to $H^2$, and its boundary values are precisely
    $f_n(m)=f(m+in)$. Applying identity \eqref{eq_poisson_convoluzione} to $f_n$, we obtain
    \begin{equation}\label{eq:f_even_odd}
    f(m+in)
    =
    \frac{2i}{\pi}
    \sum_{j\in2\mathbb Z+1}
    \frac{f(m-j+in)}{j}.
    \end{equation}
    Thus, for every $n\geqslant0$, the values of $f$ on $ \Lambda^{\operatorname{even}}_n=\{2m+in:m\in\mathbb Z\}$ determine the values of $f$ on $\Lambda^{\operatorname{odd}}_n=\{2m+1+in:m\in\mathbb Z\},$
    and vice versa. Thus, if either $f|_{\Lambda^{\operatorname{even}}_n}=0$ or $f|_{\Lambda^{\operatorname{odd}}_n}=0$, then $f(m+in)=0$ for every $m\in\mathbb Z$. Applying the boundary norm identity to the vertical translate $f_n(z)=f(z+in)$, we obtain $f_n\equiv0$. Hence $f(m+ik)=0$ for every $m\in\mathbb Z$ and every $k\geqslant n$. However, holomorphicity also implies that $f(m+ik)=0$ also for every $0\leqslant k<n$. Indeed, if $f$ vanishes identically on the horizontal line of height $n$, then, for every $m\in\mathbb Z$, by holomorphicity, we would have
     \begin{align*}
     0&=f(m+1+in)-f(m+i(n-1))+i(f(m+in)-f(m+1+i(n-1)))\\
     &=-f(m+i(n-1))-if(m+1+i(n-1)).
     \end{align*}
    Hence, $|f(m+i(n-1))|=|f(m+1+i(n-1))|$ and the sequence $m\mapsto |f(m+i(n-1))|$ is constant. Since $f(\cdot+i(n-1))\in\ell^2(\mathbb Z)$, this constant must be zero. Iterating the argument we obtain that $f\equiv 0$ in $\mathbb Z^2_+$. We proved the following.
    \begin{prop}\label{p:uniqueness}
    For every fixed $n\geqslant 0$, the sets $\Lambda_{n}^{\operatorname{even}}=\{2m+in\in\mathbb Z^2:m\in\mathbb Z\}$ and $\Lambda_{n}^{\operatorname{odd}}=\{2m+1+in\in\mathbb Z^2:m\in\mathbb Z\}$ are uniqueness sets for $H^2$. Namely, given $f$ in $H^2$, if either  
    \(
    f|_{\Lambda_n^{\operatorname{even}}}\equiv 0\) or \(
    f|_{\Lambda_n^{\operatorname{odd}}}\equiv 0,
    \) 
    then \(f\equiv 0\) on $\mathbb Z^2_+$.
    \end{prop}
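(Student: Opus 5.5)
The plan is to follow the argument sketched in the preceding remark, organised in three steps. Let $f\in H^2$ vanish on $\Lambda_n^{\operatorname{even}}$ or on $\Lambda_n^{\operatorname{odd}}$.

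\textbf{Step 1: reduce to the boundary.} Consider the vertical translate $f_n(z)=f(z+in)$. Since the horizontal $\ell^2$-norms of $f$ are uniformly bounded, $f_n$ belongs to $H^2$, and its boundary trace is $Jf_n=f(\cdot+in)$. This reduces the question to the case $n=0$, applied to $f_n$.

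\textbf{Step 2: the boundary line of $f_n$ vanishes.} Theorem~\ref{t:H^2_PW} gives $Jf_n=P(Jf_n)$. By \eqref{Eq:P_Hilbert} this reads $Jf_n=\frac12(Jf_n+i\mathcal H_{\operatorname{odd}}Jf_n)$, which is identity \eqref{eq_poisson_convoluzione}:
\[
f_n(m)=\frac{2i}{\pi}\sum_{j\in2\mathbb Z+1}\frac{f_n(m-j)}{j}.
\]
Since $j$ is odd, $m-j$ has parity opposite to $m$. Hence the values of $f_n$ on even integers are determined linearly by its values on odd integers, and vice versa. If $f_n$ vanishes on one parity class, the formula forces it to vanish on the other. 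Therefore $Jf_n=0$, and the isometry of Theorem~\ref{t:Hp_boundary_banach} gives $f_n\equiv0$. In other words, $f(m+ik)=0$ for all $m$ and all $k\geqslant n$.

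\textbf{Step 3: descend below height $n$.} I would argue by downward induction on $k<n$. Suppose the line of height $k+1$ vanishes identically. Holomorphicity on the square with south-west vertex $m+ik$ is $X_if(m+ik)=0$, which reduces to
\[
f(m+ik)+if(m+1+ik)=0.
\]
Hence $|f(m+ik)|=|f(m+1+ik)|$ for every $m$, so $m\mapsto|f(m+ik)|$ is constant. Since $f(\cdot+ik)\in\ell^2(\mathbb Z)$, that constant is $0$. Iterating down to $k=0$ gives $f\equiv0$ on $\mathbb Z^2_+$.

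\textbf{Main obstacle.} The only delicate point is justifying \eqref{eq_poisson_convoluzione} for the translate $f_n$, that is, knowing that $P$ fixes $J(H^2)$. This is exactly the reproducing formula \eqref{E:reproducingformulaconvolution}, which follows from Proposition~\ref{p:cauchy_half_plane}. The other ingredient is the explicit form of $C$ on $\mathbb Z$: $C(0)=\tfrac12$, $C$ vanishes on nonzero even integers, and $C(m)=\frac{i}{\pi m}$ on odd $m$. Everything else is elementary.
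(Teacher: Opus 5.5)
Your proposal is correct and follows essentially the same argument as the paper. You translate vertically, use the boundary identity $f=\frac{2i}{\pi}\sum_{j\in2\mathbb Z+1}f(\cdot-j)/j$ (from the reproducing formula) together with the boundary norm identity to kill the line at height $n$ and everything above it, and then descend line by line via $X_if=0$, which gives $f(m+ik)+if(m+1+ik)=0$; this is the paper's identity up to sign, and the constant-modulus $\ell^2$ sequence must vanish.
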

    \noindent We will come back to this topic in Section \ref{s:sampling}.
    \end{rmk}
    
    We give some details on the Hilbert space structure of $H^2$. By Theorem \ref{t:H^2_PW} it is immediate to explicitly compute an orthonormal basis. 
    \begin{prop}\label{T:ONB}
    
    For $k\in\mathbb{Z}$, we define the function
    \[
    E_k(m+in)=\frac{\sqrt 2}{2\pi}\int_0^\pi e^{2kit}e_t(m+in)\operatorname{d}\!t, \qquad m+in\in\mathbb{Z}^2_+.\]
    Then, the set $\{E_k\}_{k\in\mathbb{Z}}$ is a complete orthonormal basis for $H^2$.
    \end{prop}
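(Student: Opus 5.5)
The plan is to transfer everything to the spectral side via Theorem \ref{t:H^2_PW}. The map $f\mapsto \mathcal F(Jf)$ is a unitary isomorphism from $H^2$ onto the closed subspace $L^2([0,\pi])\subseteq L^2([-\pi,\pi])$: it is isometric by \eqref{eq:F_isometry} and onto by the Paley--Wiener characterization. An orthonormal basis of $H^2$ is therefore the preimage of any orthonormal basis of $L^2([0,\pi])$. The natural candidate is $\{\pi^{-1/2}e^{2ikt}\}_{k\in\mathbb Z}$, the standard Fourier basis of the interval $[0,\pi]$, which has length $\pi$ and frequencies $2k$.

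First I will check that $E_k$ is exactly the element produced by the inverse construction \eqref{eq:f_representation_spectral} with $g_k(t)=\pi^{-1/2}e^{2ikt}\chi_{[0,\pi]}(t)$. Indeed,
\[
\frac{1}{\sqrt{2\pi}}\int_0^\pi \pi^{-1/2}e^{2ikt}e_t(m+in)\operatorname{d}\!t=\frac{1}{\pi\sqrt2}\int_0^\pi e^{2ikt}e_t(m+in)\operatorname{d}\!t=\frac{\sqrt2}{2\pi}\int_0^\pi e^{2ikt}e_t(m+in)\operatorname{d}\!t .
\]
So $E_k\in H^2$, and the proof of Theorem \ref{t:H^2_PW} gives $\mathcal F(JE_k)=g_k$. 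This also follows directly by computing the boundary values, since $E_k(m)$ is $(2\pi)^{-1/2}$ times the inverse Fourier transform of $g_k$.

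Next, orthonormality. Using $\langle f,g\rangle_{H^2}=\langle Jf,Jg\rangle_{\ell^2}$ and Plancherel,
\[
\langle E_k,E_l\rangle_{H^2}=\langle g_k,g_l\rangle_{L^2}=\frac1\pi\int_0^\pi e^{2i(k-l)t}\operatorname{d}\!t=\delta_{kl}.
\]

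For completeness, take $f\in H^2$ orthogonal to every $E_k$. Then $\mathcal F(Jf)\in L^2([0,\pi])$ is orthogonal to every $e^{2ikt}$ on $[0,\pi]$. Completeness of this trigonometric system on an interval of length $\pi$ forces $\mathcal F(Jf)=0$. Hence $Jf=0$, and by the norm identity of Theorem \ref{t:Hp_boundary_banach} we get $f\equiv0$ on $\mathbb Z^2_+$.

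No step is a real obstacle. The only point needing care is normalization: the paper uses the unnormalized measure on $[-\pi,\pi]$ together with the $1/\sqrt{2\pi}$ convention for $\mathcal F$, and the constant $\sqrt2/(2\pi)$ must match it.
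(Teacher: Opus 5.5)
Your proposal is correct and is the paper's proof: you identify $\mathcal F(JE_k)=\pi^{-1/2}e^{2ik(\cdot)}\chi_{[0,\pi]}$ and pull back the orthonormal basis $\{\pi^{-1/2}e^{2ik(\cdot)}\chi_{[0,\pi]}\}_{k\in\mathbb Z}$ of $\chi_{[0,\pi]}L^2([-\pi,\pi])$ through the unitary map $f\mapsto\mathcal F(Jf)$ of Theorem \ref{t:H^2_PW}. One small slip, in your parenthetical remark only: with the paper's convention $E_k(m)$ equals $(\mathcal F^{-1}g_k)(m)$ exactly, not $(2\pi)^{-1/2}$ times it, as your displayed computation already shows, so the argument is unaffected.
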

    \begin{proof} The proof follows from Theorem \ref{t:H^2_PW}, the fact that $\mathcal{F}JE_k=\frac{1}{\sqrt\pi}e^{2ki(\cdot)}\chi_{[0,\pi]}$ and that $\{\frac{1}{\sqrt\pi} e^{2ki(\cdot)}\chi_{[0,\pi]}\}_{k\in\mathbb{Z}}$ is an orthonormal basis of $\chi_{[0,\pi]}L^2([-\pi,\pi])$.
    \end{proof}

    \begin{rmk}
        In the previous proposition we could have found a different basis of $H^2$ by starting from the exponentials of odd frequency $\{\frac{1}{\sqrt{\pi}}e^{i(2k+1)(\cdot)}\}_{\mathbb Z}$ since they still constitute an orthonormal basis for $\chi_{[0,\pi]}L^2([-\pi,\pi])$. Thus, any function $f$ in $H^2$ can be recovered either by only considering exponentials of even or odd frequency. This is of course coherent with \eqref{eq_poisson_convoluzione} and Proposition \ref{p:sampling}.
    \end{rmk}

    Associated to the orthonormal basis $\{E_k\}_{k\in\mathbb{Z}}$, there is a natural shift operator. 
    \begin{thm}\label{T:shift}
    For $f\in H^2$, set $S_2f(m+in):=f(m+2+in)$. Then $S_2$ defines an isometry on $H^2$ and satisfies $S_2E_k=E_{k+1}$ for every $k\in\mathbb Z$. Moreover, $S_2$ is unitarily equivalent to the bilateral shift $\Sigma:L^2([-\pi,\pi])\to L^2([-\pi,\pi])$, defined by $\Sigma g(t)=e^{it}g(t)$.
    \end{thm}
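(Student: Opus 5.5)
First I will check that $S_2$ maps $H^2$ into itself and is norm preserving. Horizontal translation by $2$ commutes with the operator $X_i$, so $S_2f$ is discrete holomorphic on $\mathbb Z^2_+$. Moreover,
\[
\|S_2f(\cdot+in)\|_{\ell^2}=\|f(\cdot+in)\|_{\ell^2}
\]
for every $n\geqslant0$, so $S_2f\in H^2$ with $\|S_2f\|_{H^2}=\|f\|_{H^2}$. By Theorem~\ref{t:Hp_boundary_banach} this can equally be read on the boundary. Since $S_2^{-1}f(z)=f(z-2)$ is of the same type, $S_2$ is in fact a surjective isometry, that is, a unitary operator on $H^2$.

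Next I will verify $S_2E_k=E_{k+1}$ directly from the integral formula in Proposition~\ref{T:ONB}. Since $e_t(m+2+in)=e^{2it}e_t(m+in)$ by \eqref{d:discrete-exponential}, we get
\[
S_2E_k(m+in)=\frac{\sqrt2}{2\pi}\int_0^\pi e^{2kit}e^{2it}e_t(m+in)\operatorname{d}\!t=E_{k+1}(m+in).
\]
This shows that $S_2$ acts as a bilateral shift on the orthonormal basis $\{E_k\}_{k\in\mathbb Z}$.

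For the unitary equivalence, I will use the unitary $U\colon H^2\to\chi_{[0,\pi]}L^2([-\pi,\pi])$, $U=\mathcal F\circ J$, given by Theorem~\ref{t:H^2_PW}. On the boundary, translation by $2$ corresponds to multiplication by $e^{2it}$, so $US_2U^{-1}g(t)=e^{2it}g(t)$. This identity is the main obstacle. The target operator $\Sigma$ multiplies by $e^{it}$ on all of $L^2([-\pi,\pi])$, not by $e^{2it}$ on $\chi_{[0,\pi]}L^2$, so a further unitary is needed.

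I plan to build that unitary from the rescaling $V\colon\chi_{[0,\pi]}L^2([-\pi,\pi])\to L^2([-\pi,\pi])$,
\[
(Vg)(s)=\tfrac{1}{\sqrt2}\,g\big((s+\pi)/2\big)\,e^{i\theta(s)},
\]
where the unimodular phase $e^{i\theta}$ is chosen to fix the phase. One checks that $V$ is unitary. Under $V$, multiplication by $e^{2it}$ becomes multiplication by $e^{i(s+\pi)}=-e^{is}$, so $VUS_2=-\Sigma VU$. The extra factor $-1$ can be absorbed by the substitution $s\mapsto s+\pi$ modulo $2\pi$, which is measure preserving on $\mathbb T$.

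A cleaner alternative avoids these phase manipulations altogether. Both $S_2$ and $\Sigma$ are bilateral shifts of multiplicity one: $S_2$ on $\{E_k\}_{k\in\mathbb Z}$, and $\Sigma$ on $\{(2\pi)^{-1/2}e^{ik(\cdot)}\}_{k\in\mathbb Z}$. The unitary $W\colon H^2\to L^2([-\pi,\pi])$ defined by $WE_k=(2\pi)^{-1/2}e^{ik(\cdot)}$ then satisfies $WS_2=\Sigma W$, since both sides send $E_k$ to $(2\pi)^{-1/2}e^{i(k+1)(\cdot)}$. This gives the claimed unitary equivalence, and I will present it this way.
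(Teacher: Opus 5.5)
Your proof is correct. The isometry and the identity $S_2E_k=E_{k+1}$ are handled exactly as in the paper, which simply calls them straightforward. For the unitary equivalence, the paper writes the intertwiner explicitly as $V\mathcal F$, with $V(\chi_{[0,\pi]}g)(\theta)=\frac{1}{\sqrt2}\,g(\theta/2)$ for $\theta\in[0,2\pi]$, so that multiplication by $e^{2it}$ on $[0,\pi]$ becomes multiplication by $e^{i\theta}$.

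Parametrizing $\mathbb T$ by $[0,2\pi]$ instead of $[-\pi,\pi]$ is precisely what removes the factor $-1$ you ran into: your rescaling composed with the translation by $\pi$ is exactly the paper's $V$. The unimodular phase $e^{i\theta(s)}$ in your first sketch is superfluous, since multiplication operators commute.

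Your basis-matching unitary $W$ is in fact the same operator as $V\mathcal F J$, because
\[
V\mathcal F J E_k=V\big(\pi^{-1/2}e^{2ki(\cdot)}\chi_{[0,\pi]}\big)=(2\pi)^{-1/2}e^{ik(\cdot)}.
\]
Your abstract route needs only Proposition~\ref{T:ONB} and the fact that a map between two orthonormal bases extends to a unitary. The paper's route additionally gives a concrete spectral formula for the intertwiner.
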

    \begin{proof}
    The fact that $S_2$ is an isometry on $H^2$ and shifts the elements of the basis $\{E_k\}_{k\in\mathbb Z}$ is straightforward.
    It is also not difficult to check that $S_2=(V\mathcal F)^{-1}\Sigma V\mathcal F$ where 
       \[V\colon \chi_{[0,\pi]}L^2(\mathbb{T})\to L^2(\mathbb{T}), \qquad  V(\chi_{[0,\pi]}f)(e^{i\theta})= \frac{1}{\sqrt{2}}f(e^{i\frac\theta2}), \,\theta\in[0,2\pi].\]
    In particular, $V\mathcal F$ is a unitary operator.
    \end{proof}
    
    \begin{rmk}
    Theorem \ref{T:shift} shows that the two-step horizontal translation $
    S_2$ is a natural bilateral shift on $H^2$. In contrast, the one-step horizontal
    translation
    \(
    S_1f(m+in)=f(m+1+in)
    \)
    is unitary on $H^2$, but it is not a bilateral shift.
    
    Indeed, assume by contradiction that $S_1$ is a bilateral shift. Then, by definition, there
    exists a non-zero wandering subspace \(V\subseteq H^2\) such that 
    \(
    S_1^nV\perp V
    \) for every $n\in\mathbb Z\setminus\{0\}$, and
    \[
    H^2=\bigoplus_{n\in\mathbb Z}S_1^nV.
    \]
    In particular, taking any non-zero \(f\in V\), we would have
    \[
    \langle f,S_1^nf\rangle_{H^2}=0,
    \qquad n\in\mathbb Z\setminus\{0\}.
    \]
    We now show that this is impossible. Namely, if $f\in H^2$ satisfies
    \[
    \langle f,S_1^nf\rangle_{H^2}=0,
    \qquad n\in\mathbb Z\setminus\{0\},
    \]
    then \(f\equiv0\). 
    To see this, notice that, for every $n\in\mathbb Z\backslash\{0\}$,
    \begin{align*}
    0&=\langle f, S^n_1 f\rangle_{H^2}=\int_{-\pi}^{\pi}\chi_{[0,\pi]}(t)(\mathcal F f)(t)\overline{(\mathcal F (S^n_1 f)(t)}\operatorname{d}\!t= \int_{-\pi}^{\pi}\chi_{[0,\pi](t)}|\mathcal Ff(t)|^2 e^{-int}\operatorname{d}\!t.   
    \end{align*}
    Thus, all Fourier coefficients of the function $\chi_{[0,\pi]}|\mathcal F f|^2$
    vanish, except possibly the one corresponding to \(n=0\). Hence
    \(\chi_{[0,\pi]}|\mathcal F f|^2\) is constant almost everywhere on
    $[-\pi,\pi]$. Since it vanishes on \((-\pi,0)\), this constant must be zero.
    Therefore \(\mathcal F f=0\) almost everywhere on \([0,\pi]\), and hence
    \(f\equiv0\), as desired. This contradicts the existence of a non-zero vector \(f\in V\). We conclude that \(S_1\) has no non-trivial wandering subspace and, in particular, it is not a bilateral shift.
    \end{rmk}

    \begin{rmk}
    The decomposition of the boundary lattice into even and odd points also appears in the action of the Szeg\H{o} projection on the canonical basis of $\ell^2$.  Let $\text{e}_n$ be an element of  the canonical orthonormal basis of $\ell^2$, that is, $\text{e}_n(k)=\delta_n(k).$ We get
    \begin{equation}\label{E:Pe_n=K_n}
        P\text{e}_n(m)=\sum_{j=-\infty}^{+\infty}\text{e}_n(m-j)C(j)=C(m-n)=K_{n}(m), \qquad m\in\mathbb{Z}.
    \end{equation}
    In other words, $P\text{e}_n$ coincides with the kernel $K_n$ on $\mathbb{Z}$, hence the equality $P\text{e}_n=K_n$. Notice that
    \begin{equation}\label{Eq:ONBkernels}
        K_{-2k}(m)=\frac{1}{2\pi}\int_0^\pi e^{(m+2k)it}\operatorname{d}\!t=\frac{1}{\sqrt{2}}E_k(m), \qquad m\in\mathbb{Z}.
    \end{equation}
    In particular, $\{\sqrt 2 P e_{2k}\}_{k\in\mathbb{Z}}$ is orthonormal in $H^2$, as it coincides with the orthonormal basis that we have already discussed. The same holds considering the odd frequencies $\{\sqrt2 P e_{2k+1}\}_{k\in\mathbb{Z}}$, and this orthonormal basis would correspond to the set $\{\pi^{-1/2}e^{(2k+1)i(\cdot)}\}_{k\in\mathbb{Z}}$ in $L^2([0,\pi])$. In other words, after normalization, the Szeg\H{o} projection sends both the even
    and the odd parts of the canonical basis of $\ell^2$ onto orthonormal bases of
    the Hardy boundary space $J(H^2)$.
    \end{rmk} 
     
    \section{Discrete approximation of classical $H^2$-functions}\label{s:approximation}
    
    Let us denote by $\mathcal U$ the upper half-plane $\mathcal U=\{x+iy\in\mathbb C: y> 0\}$ and let $H^2(\mathcal U)$ be the classical Hardy space on the upper half-plane. The purpose of this section is to relate the discrete Hardy spaces with the classical space $H^2(\mathcal U)$. 
    By the classical Paley--Wiener characterization of $H^2(\mathcal U)$, \cite{PW}, we know that, given any $f\in H^2(\mathcal U)$, then
    \begin{equation}\label{Eq:PWhalfplane}
    f(x+iy)=\frac{1}{\sqrt{2\pi}} \int_0^\infty g(t) e^{-yt} e^{ixt}\operatorname{d}\! t, \qquad x+iy\in\mathcal{U},
    \end{equation}
    for some $g$ in $L^2(\mathbb R)$ with $\supp g\subseteq [0,+\infty)$. For the rest of this section, we fix such functions~$f,g$.
    
    The idea to approximate $f$ with functions on discrete lattices is the following. For $\varepsilon>0$, we consider the dilated lattice $\varepsilon\mathbb Z^2_+$. Notice that the smaller the parameter $\varepsilon$, the closer the points in $\varepsilon\mathbb Z^2_+$, and the fuller the grid $\varepsilon\mathbb Z^2_+$ becomes. Heuristically, first we approximate the function~$f$ on the points on the lattice $\varepsilon\mathbb Z^2_+$ for fixed $\varepsilon.$ Then, taking a limit as $\varepsilon\to 0^+$, we establish a uniform convergence result on compact sets of $\mathcal{U}$. 
    
    For $m,n\in\mathbb{Z}$, equation \eqref{Eq:PWhalfplane} gives
    \[
    f(\varepsilon m+i\varepsilon n)=\frac{1}{\sqrt{2\pi}} \int_0^\infty g(t) e^{-\varepsilon nt} e^{i\varepsilon mt}\operatorname{d}\! t.
    \]
    This motivates us to introduce the function defined on the lattice $\varepsilon\mathbb{Z}^2_+$
    \begin{equation}\label{eq:hvarepsilon}
    h(\varepsilon m+i\varepsilon n)=\frac{1}{\sqrt{2\pi}}\int_0^{\frac\pi\varepsilon}g(t)e_{\varepsilon t}(m+in)\operatorname{d}\!t= \frac{1}{\sqrt{2\pi}}\int_0^{\frac\pi\varepsilon} g(t) e^{im\varepsilon t}\Big(\frac{\cos(\varepsilon t)}{1+\sin(\varepsilon t)}\Big)^{n} \operatorname{d}\!t.
    \end{equation}
    
    We briefly develop the theory for discrete Hardy spaces on dilated lattices. For $\varepsilon>0$, we define
    \[
    H^2(\varepsilon\mathbb{Z}^2_+)=\{h\colon\varepsilon\mathbb{Z}^2_+\to\mathbb{C}\,\,\text{s.t.}\,\,\|h\|_{H^2(\varepsilon\mathbb{Z}^2_+)}^2=\sup_{n\geqslant 0}\sum_{m=-\infty}^{+\infty}\varepsilon\cdot|h(\varepsilon(m+in))|^2<\infty\}.
    \]
    One can check that
    \[
    H^2(\varepsilon\mathbb{Z}^2_+)=\{h\colon\varepsilon\mathbb{Z}^2_+\to\mathbb{C}\,|\,\exists f\in H^2(\mathbb{Z}^2_+)\,\,\text{s.t.}\,\,h(\varepsilon\cdot)=f\},\qquad \|h\|_{H^2(\varepsilon\mathbb{Z}^2_+)}=\sqrt{\varepsilon}\|f\|_{H^2(\mathbb{Z}^2_+)}.
    \]
    The factor $\varepsilon$ appears in the relation between the norms to take care of the increasing quantity of points in the grids $\varepsilon\mathbb{Z}^2$. In particular, introducing the $H^2(\mathbb{Z}^2_+)$-function
    \[
    \widetilde{h_\varepsilon}(m+in)=\frac{1}{\varepsilon\sqrt{2\pi}}\int_0^\pi g( t/\varepsilon)e^{imt}\Big(\frac{\cos t}{1+\sin t}\Big)^n \operatorname{d}\!t, \qquad m+in\in\mathbb{Z}^2_+,
    \]
    by a simple change of variable in \eqref{eq:hvarepsilon} we have that
    \begin{align*}
    h(\varepsilon m+i\varepsilon n)=
    \frac{1}{\varepsilon\sqrt{2\pi}}\int_0^\pi g( t/\varepsilon)e^{imt}\Big(\frac{\cos t}{1+\sin t}\Big)^n \operatorname{d}\!t=\widetilde{h_\varepsilon}(m+in).  
    \end{align*} 
    Then, by Theorem \ref{t:H^2_PW}, $h\in H^2(\varepsilon\mathbb{Z}^2_+)$ and
    \[
    \|h\|_{H^2(\varepsilon\mathbb{Z}^2_+)}^2=\varepsilon\|\widetilde{h_\varepsilon}\|_{H^2(\mathbb{Z}^2_+)}^2=\frac{1}{\varepsilon}\int_0^\pi |g(t/\varepsilon)|^2 \operatorname{d}\!t=\int_0^{\frac{\pi}{\varepsilon}} |g(s)|^2 \operatorname{d}\!s.
    \]
    In particular, by the dominated convergence theorem, the norm $\|h\|_{H^2(\varepsilon\mathbb{Z}^2_+)}$ tends to $\|f\|_{H^2(\mathcal{U})}=\|g\|_{L^2((0,\infty))}$, as $\varepsilon\to 0.$

    \begin{thm}\label{T:approximationdiscrete}
    Let $f$ be as in \eqref{Eq:PWhalfplane}, $0<\varepsilon\leqslant1$ and $h$ as in \eqref{eq:hvarepsilon}. Then, for every compact set $K\subseteq \mathcal U$ we have that
    \[
    \sup_{K\cap\varepsilon\mathbb{Z}^2_+}|f-h|
    \leqslant C_K \|g\|_{L^2((\frac{\pi}{2\varepsilon},\frac{\pi}{\varepsilon}))}+O(\varepsilon^{\frac{1}{4}}),
    \]
    where $C_K$ is a constant that only depends only on the compact $K$, not on $\varepsilon$ or $f$.
    \end{thm}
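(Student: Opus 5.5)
Fix a compact set $K\subseteq\mathcal U$ and let $y_K=\min_K \operatorname{Im} z>0$, $Y_K=\max_K\operatorname{Im} z$. Take a point $z=\varepsilon m+i\varepsilon n\in K\cap\varepsilon\mathbb Z^2_+$ and write $x=\varepsilon m$, $y=\varepsilon n\geqslant y_K$. Set $\phi(s)=\frac{\cos s}{1+\sin s}$. Comparing \eqref{Eq:PWhalfplane} with \eqref{eq:hvarepsilon}, I would split $\sqrt{2\pi}\,(f-h)(z)$ into three pieces:
\[
I_1=\int_{\pi/\varepsilon}^{\infty} g(t)e^{-yt}e^{ixt}\operatorname{d}\!t,\qquad
I_2=\int_{\pi/(2\varepsilon)}^{\pi/\varepsilon} g(t)e^{ixt}\big(e^{-yt}-\phi(\varepsilon t)^n\big)\operatorname{d}\!t,
\]
\[
I_3=\int_0^{\pi/(2\varepsilon)} g(t)e^{ixt}\big(e^{-yt}-\phi(\varepsilon t)^n\big)\operatorname{d}\!t.
\]
Each piece will be estimated by Cauchy--Schwarz.

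\textbf{The tail $I_1$.} Cauchy--Schwarz gives
\[
|I_1|\leqslant \|g\|_{L^2}\,\frac{e^{-y\pi/\varepsilon}}{\sqrt{2y}}\leqslant \|g\|_{L^2}\,\frac{e^{-y_K\pi/\varepsilon}}{\sqrt{2y_K}},
\]
which is $O(\varepsilon^{N})$ for every $N$. It can therefore be absorbed into the error term.

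\textbf{The high-frequency window $I_2$.} This term produces $C_K\|g\|_{L^2((\frac{\pi}{2\varepsilon},\frac{\pi}{\varepsilon}))}$. I would bound the two parts of the difference separately.
\begin{itemize}
\item The exponential part satisfies $\int_{\pi/(2\varepsilon)}^{\pi/\varepsilon}e^{-2yt}\operatorname{d}\!t\leqslant (2y)^{-1}$.
\item For the discrete part, substitute $s=\varepsilon t$ and use the symmetry \eqref{Eq:symmetryphi}, $|\phi(s)|=|\phi(\pi-s)|$. This gives
\[
\int_{\pi/(2\varepsilon)}^{\pi/\varepsilon}|\phi(\varepsilon t)|^{2n}\operatorname{d}\!t=\frac1\varepsilon\int_0^{\pi/2}\phi(s)^{2n}\operatorname{d}\!s .
\]
Since $\phi(s)=\tan(\frac\pi4-\frac s2)\leqslant e^{-s}$ on $[0,\pi/2]$ (see the next paragraph), this is at most $\frac{1}{2n\varepsilon}=\frac1{2y}$.
\end{itemize}
Hence $|I_2|\leqslant 2(2y_K)^{-1/2}\|g\|_{L^2((\frac{\pi}{2\varepsilon},\frac\pi\varepsilon))}$. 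The bound uses only $y=\varepsilon n$ bounded below on $K$, and is uniform in $\varepsilon$ and $f$.

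\textbf{The main range $I_3$ and the key inequality.} This is the step I expect to be the real obstacle, because it needs a precise comparison between $\phi(s)^n$ and $e^{-ns}$ for $0\leqslant s\leqslant\pi/2$. The key observation is that $-\log\phi(s)=\operatorname{gd}^{-1}(s)=\int_0^s\sec u\,\operatorname{d}\!u$, the inverse Gudermannian. This gives two facts.
\begin{itemize}
\item Because $\sec u\geqslant 1$, we get $-\log\phi(s)\geqslant s$, so $0\leqslant\phi(s)\leqslant e^{-s}$. This is the bound used in $I_2$.
\item Because $\sec u-1=O(u^2)$ on $[0,\pi/4]$, we get $-\log\phi(s)\leqslant s+c s^3$ there.
\end{itemize}
Consequently, for $\varepsilon t\leqslant\pi/4$,
\[
0\leqslant e^{-yt}-\phi(\varepsilon t)^n=e^{-yt}\big(1-e^{-n(\operatorname{gd}^{-1}(\varepsilon t)-\varepsilon t)}\big)\leqslant e^{-yt}\min\{1,\,c\,y\varepsilon^2t^3\}.
\]
For $\varepsilon t\in[\pi/4,\pi/2]$ I would just use the trivial bound $2e^{-yt}$.

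\textbf{Bounding $I_3$.} Split the range at $T=\varepsilon^{-1/2}$ (any choice with $\varepsilon^2T^3\to0$ and $yT\to\infty$ works).
\begin{itemize}
\item On $[0,T]$, Cauchy--Schwarz gives a bound $\|g\|_{L^2}\,c\,Y_K\,\varepsilon^2T^3(2y_K)^{-1/2}=O(\varepsilon^{1/2})$.
\item On $[T,\pi/(2\varepsilon)]$, which includes the region $\varepsilon t\geqslant\pi/4$, the bound is $2\|g\|_{L^2}\,e^{-y_KT}(2y_K)^{-1/2}$. This is superpolynomially small.
\end{itemize}

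Collecting the three estimates yields the statement, with $C_K=2(2y_K)^{-1/2}$. The remainder is $O(\varepsilon^{1/2})\subseteq O(\varepsilon^{1/4})$, with implicit constant depending on $K$ and $\|g\|_{L^2}$. Should the cubic Taylor bound for $\operatorname{gd}^{-1}$ turn out to be delicate, I would fall back on a cruder quadratic comparison $|\log\phi(s)+s|\leqslant cs^2$. With a different choice of $T$, that cruder bound still gives a power rate such as $\varepsilon^{1/4}$.
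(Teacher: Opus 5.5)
Your proof is correct and is essentially the paper's argument. You use the same three-way split of $\sqrt{2\pi}(f-h)$; the only change is that you move the piece $\int_{\pi/(2\varepsilon)}^{\pi/\varepsilon} g(t)e^{-yt}e^{ixt}\,\mathrm{d}t$ into the window term instead of the tail. You also use the same symmetry \eqref{Eq:symmetryphi} together with $0\leqslant\varphi(s)\leqslant e^{-s}$ to get $C_K\asymp y_K^{-1/2}$, and the same cut of the main range at scale $\varepsilon^{-1/2}$ with a cubic comparison of $\varphi(s)$ and $e^{-s}$.

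The one real improvement is writing $\varphi=e^{-\operatorname{gd}^{-1}}$. This keeps the factor $e^{-yt}$ that the paper's telescoping bound $n\,|e^{-t}-\varphi(t)|$ discards, so you get $O(\varepsilon^{1/2})$ rather than $O(\varepsilon^{1/4})$. Using the cubic bound on all of $[0,\pi/(4\varepsilon)]$ and integrating $t^6e^{-2yt}$ over $[0,\infty)$ would even give $O(\varepsilon^{2})$.
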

    Before the proof, we remark that since $g\in L^2$, the quantity
    \[
    \|g\|_{L^2((\frac{\pi}{2\varepsilon},\frac{\pi}{\varepsilon}))}^2=\int_0^\infty |g(t)|^2 \chi_{(\frac{\pi}{2\varepsilon},\frac{\pi}{\varepsilon})}(t)\operatorname{d}\!t
    \]
    tends to $0$ as $\varepsilon\to 0^+$. Hence, we could have written the statement of the theorem saying simply that $\sup_{K\cap\varepsilon\mathbb{Z}^2_+}|f-h|$ is $o(1)$ when $\varepsilon\to 0$. 
    \begin{proof}[Proof of Theorem \ref{T:approximationdiscrete}]
    There exist $0<a<b<\infty$ such that
    \(
    a\leqslant\operatorname{Im}z\leqslant b
    \) for every $z$ in $K$. Hence, whenever $\varepsilon m+i\varepsilon n\in K$, we  have 
    \begin{equation}\label{eq:bounds_n}
    0<a\leqslant\varepsilon n\leqslant b.
    \end{equation}
    We denote
    \[
    \varphi(t)=\frac{\cos(t)}{1+\sin(t)}, \qquad t\in[0,\pi].
    \]
    For $m,n$ such that $\varepsilon m+i\varepsilon n\in K$ we have that
    \begin{align*}
    \sqrt{2\pi}\big(f(\varepsilon m+i\varepsilon n)-h(\varepsilon m+i\varepsilon n)\big)
    &= \int_0^{\frac{\pi}{2\varepsilon}}
    g(t)
    (
    e^{-\varepsilon nt}-\varphi(\varepsilon t)^n
    )e^{i\varepsilon mt}\operatorname{d}\!t\\
    &\quad +  \int_{\frac{\pi}{2\varepsilon}}^{+\infty} g(t) e^{-\varepsilon n t }e^{i\varepsilon m t}\operatorname{d}\!t-
    \int_{\frac{\pi}{2\varepsilon}}^{\frac\pi\varepsilon} g(t)\varphi(\varepsilon t)^ne^{i\varepsilon m t}\operatorname{d}\!t\\
    &=I_{\varepsilon,m,n}
    +
    I\!I_{\varepsilon,m,n} 
    +
    I\!I\!I_{\varepsilon,m,n} 
    .
    \end{align*}
    We begin with $I\!I_{\varepsilon,m,n}$ and $
    I\!I\!I_{\varepsilon,m,n}$. Keeping in mind \eqref{eq:bounds_n}, we have
    \begin{align}\label{eq:II_bounds}
    \nonumber\sup_{\substack{m,n:\\
    \varepsilon m+i\varepsilon n\in K}} |I\!I_{\varepsilon,m,n}|&
    \leqslant \|g\|_{L^2((0,\infty))}\Big(\int_{\frac \pi{\varepsilon}}^{\infty}e^{-2\varepsilon n t}\operatorname{d}\!t\Big)^\frac{1}{2}\\
    &\leqslant \|g\|_{L^2((0,+\infty))}\Big(\int_{\frac \pi{\varepsilon}}^{+\infty}e^{-2a t}\operatorname{d}\!t\Big)^\frac{1}{2}=O(e^{-2a\frac{\pi}{\varepsilon}}),
    \end{align}
    for $0<\varepsilon<1$. About $
    I\!I\!I_{\varepsilon,m,n}$, by Cauchy--Schwarz,
    \begin{align}\label{Eq:IIIpt1}
        |I\!I\!I_{\varepsilon,m,n}|^2&\leqslant
        \|g\|_{L^2((\frac{\pi}{2\varepsilon},\frac{\pi}{\varepsilon}))}^2
        \int_{\frac{\pi}{2\varepsilon}}^{\frac{\pi}{\varepsilon}}
    |\varphi(\varepsilon t)|^{2n}
    \operatorname{d}\!t=
    \|g\|_{L^2((\frac{\pi}{2\varepsilon},\frac{\pi}{\varepsilon}))}^2\left(
      \frac1\varepsilon  \int_{\frac{\pi}{2}}^{\pi}
    |\varphi(t)|^{2n}
    \operatorname{d}\!t\right).
    \end{align}
    With the change of variable $s=\pi-t$, since $\phi$ enjoys the symmetry property in \eqref{Eq:symmetryphi}, we have that
    \begin{align}\label{Eq:IIIpt2}
      \frac1\varepsilon  \int_{\frac{\pi}{2}}^{\pi}
    |\varphi(t)|^{2n}
    \operatorname{d}\!t&= \frac1\varepsilon  \int_0^{\frac{\pi}{2}}
    |\varphi(s)|^{2n}
    \operatorname{d}\!s\leqslant\frac1\varepsilon  \int_0^{\infty}
    e^{-2ns}
    \operatorname{d}\!s = \frac{1}{2n\varepsilon}\leqslant\frac{1}{2a}.
    \end{align}
    We used the inequality $0\leqslant\phi(s)\leqslant e^{-s}$ for $0\leqslant s\leqslant\frac{\pi}{2}$. One can check it by differentiating.
    
    We turn to $I_{\varepsilon,m,n}$.  By Cauchy--Schwarz,
    \[
    \left|\int_0^{\frac{\pi}{2\varepsilon}}
    g(t)
    (
    e^{-\varepsilon nt}-\varphi(\varepsilon t)^n
    )e^{i\varepsilon mt}\operatorname{d}\!t\right|
    \leqslant
    \|g\|_{L^2((0,\infty))}
    \Big(
    \int_0^{\frac{\pi}{2\varepsilon}}
    |e^{-\varepsilon nt}
    -\varphi(\varepsilon t)^n
    |^2
    \operatorname{d}\!t
    \Big)^{1/2}.
    \]
    Since $\varepsilon<1,$ we split the integral in the right-hand side in two parts:
    \begin{align*}
        \int_0^{\frac{\pi}{2\varepsilon}}
    |e^{-\varepsilon nt}
    -\varphi(\varepsilon t)^n
    |^2
    \operatorname{d}\!t=&
    \int_0^{\frac{\pi}{2\sqrt\varepsilon}} |e^{-\varepsilon nt}-\varphi(\varepsilon t)^n|^2\operatorname{d}\!t +\int_{\frac{\pi}{2\sqrt\varepsilon}}^{\frac{\pi}{2\varepsilon}}
    |e^{-\varepsilon nt}
    -\varphi(\varepsilon t)^n
    |^2
    \operatorname{d}\!t.
    \end{align*}
    By a change of variable, we rewrite the first summand as
    \begin{align*}
    \int_0^{\frac{\pi}{2\sqrt\varepsilon}} |e^{-\varepsilon nt}-\varphi(\varepsilon t)^n|^2\operatorname{d}\!t&=\frac1\varepsilon\int_0^{\frac{\sqrt\varepsilon\pi}{2}} |e^{- nt}-\varphi(t)^n|^2\operatorname{d}\!t.
    \end{align*}
    Checking the Taylor expansions, one can show that $e^{-t}-\varphi(t)=t^3+o(t^3)$, as $t\to 0^+$. In particular, 
    \[
    |e^{-t}-\varphi(t)|\leqslant Ct^3, \qquad t\in(0,\sqrt{\varepsilon}\pi),
    \]
    for some constant $C>0$ that does not depend on $t$ nor $\varepsilon$. 
    Hence, from the identity
    \[
    A^n-B^n=(A-B)\sum_{k=0}^{n-1} A^{n-k-1}B^k,
    \]
    and the fact that $|e^{-t}|,|\varphi(t)|\leqslant 1$ for $t\in [0,\pi]$, we conclude that
    \begin{equation}\label{eq:I_main}
    \int_0^{\frac{\pi}{2\sqrt\varepsilon}} |e^{-\varepsilon nt}-\varphi(\varepsilon t)^n|^2\operatorname{d}\!t\leqslant \frac{n^2}{\varepsilon}\int_0^{\frac{\sqrt\varepsilon\pi}{2}}|e^{-t}-\varphi(t)|^2\operatorname{ d}\! t\leqslant C^2\frac{b^2}{\varepsilon^3}\int_0^{\frac{\sqrt\varepsilon\pi}{2}}t^6\operatorname{d}\!t =O(\sqrt{\varepsilon}).
    \end{equation}
    In the last inequality we also used the fact that $\varepsilon n\leqslant b$. We move on to the last summand.
    
    We use again that 
    \(
    e^{-t}\geqslant \varphi(t)\geqslant 0
    \)
    for $t$ in $[0,\frac\pi2]$.  
    In particular, whenever $t\in[0,\frac{\pi}{2\varepsilon}]$, 
    we have the estimate 
    \[
    0\leqslant \varphi(\varepsilon t)^n
    \leqslant e^{-\varepsilon nt}
    \leqslant e^{-at}.
    \]
    Thus,
    \[
    |
    e^{-\varepsilon nt}
    -
    \varphi(\varepsilon t)^n
    |^2
    \leqslant
    (e^{-\varepsilon nt}
    +
    \varphi(\varepsilon t)^n)^2
    \leqslant
    4e^{-2at},\qquad t\in[0,\frac{\pi}{2\varepsilon}],
    \]
    and we conclude that
    \begin{equation}\label{eq:I_tail}
    \sup_{a\leqslant \varepsilon n\leqslant b}\int_{\frac{\pi}{2\sqrt\varepsilon}}^{\frac{\pi}{2\varepsilon}}
    |e^{-\varepsilon nt}
    -\varphi(\varepsilon t)^n
    |^2
    \operatorname{d}\!t
    \leqslant
    4\int_{\frac{\pi}{2\sqrt\varepsilon}}^{+\infty}e^{-2at}\operatorname{d}\!t =O(e^{-a\frac{\pi}{\sqrt\varepsilon}}).
    \end{equation}
    
    From \eqref{eq:II_bounds}, \eqref{Eq:IIIpt1}, \eqref{Eq:IIIpt2}, \eqref{eq:I_main} and \eqref{eq:I_tail} we conclude that
    \[
    \sup_{\substack{m,n:\\
    \varepsilon m+i\varepsilon n\in K}}|f(\varepsilon m+i\varepsilon n)-h(\varepsilon m+i\varepsilon n)|\leqslant \frac{1}{\sqrt{2a}}\|g\|_{L^2((\frac{\pi}{2\varepsilon},\frac{\pi}{\varepsilon}))}+O(\varepsilon^{\frac{1}{4}}),
    \]
    hence the proof.
    \end{proof}
    
    Given a real number $x$ we write $W(x):=\lfloor x+\frac 12\rfloor$, where $\lfloor\cdot\rfloor$ is the usual floor function.
    
    \begin{cor}
    Let $f$ be as in \eqref{Eq:PWhalfplane}, $h$ as in \eqref{eq:hvarepsilon} and $K\subseteq\mathcal U$ compact. For $z=x+iy$ in $K$, set
    \[
    m_\varepsilon(z)=W\!\left(\frac{x}{\varepsilon}\right),
    \qquad
    n_\varepsilon(z)=W\!\left(\frac{y}{\varepsilon}\right),\qquad z_\varepsilon
    =
    \varepsilon m_\varepsilon(z)
    +
    i\varepsilon n_\varepsilon(z).
    \]
    Then
    \[
    \lim_{\varepsilon\to 0^+}\sup_{z\in K}
    \left|
    f(z)-h(z_\varepsilon)
    \right|=0.
    \]
    \end{cor}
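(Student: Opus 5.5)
The plan is to split the error with the triangle inequality,
\[
|f(z)-h(z_\varepsilon)|\leqslant |f(z)-f(z_\varepsilon)|+|f(z_\varepsilon)-h(z_\varepsilon)|,
\]
and to control the two terms separately: the first by uniform continuity of $f$, the second by Theorem \ref{T:approximationdiscrete}.

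First, fix a slightly larger compact set. Since $K\subseteq\mathcal U$ is compact, there are $0<a<b$ and $R>0$ such that $a\leqslant\operatorname{Im}z\leqslant b$ and $|\operatorname{Re}z|\leqslant R$ for every $z\in K$. Set
\[
K'=\{x+iy:\ |x|\leqslant R+1,\ a/2\leqslant y\leqslant b+1\}.
\]
This is a compact subset of $\mathcal U$. By definition of $W$ we have $|x/\varepsilon-m_\varepsilon(z)|\leqslant\frac12$, and similarly for $n_\varepsilon(z)$. Hence $|z-z_\varepsilon|\leqslant\varepsilon/\sqrt2$. For $\varepsilon\leqslant\min\{1,a\}$ this gives $z_\varepsilon\in K'\cap\varepsilon\mathbb Z^2_+$: we have $\operatorname{Im}z_\varepsilon\geqslant a-\varepsilon/2\geqslant a/2$, so in particular $n_\varepsilon(z)\geqslant 0$.

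Second, the discrete error. Applying Theorem \ref{T:approximationdiscrete} on $K'$ gives
\[
\sup_{z\in K}|f(z_\varepsilon)-h(z_\varepsilon)|\leqslant\sup_{K'\cap\varepsilon\mathbb Z^2_+}|f-h|\leqslant C_{K'}\|g\|_{L^2((\frac{\pi}{2\varepsilon},\frac\pi\varepsilon))}+O(\varepsilon^{1/4}).
\]
The right-hand side tends to $0$, because $g\in L^2$ and the remark following the theorem applies.

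Third, the continuity error. The function $f$ is holomorphic on $\mathcal U$, hence continuous, and therefore uniformly continuous on the compact set $K'$. Since $z,z_\varepsilon\in K'$ and $|z-z_\varepsilon|\leqslant\varepsilon/\sqrt2$, it follows that $\sup_{z\in K}|f(z)-f(z_\varepsilon)|\to0$.

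Combining the two bounds proves the claim. The only point requiring care is ensuring that the rounded points $z_\varepsilon$ stay in a fixed compact subset of $\mathcal U$ on which Theorem \ref{T:approximationdiscrete} applies uniformly; the enlargement $K'$ handles this.
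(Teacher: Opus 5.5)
Your proof is correct and follows essentially the same route as the paper: the triangle inequality, uniform continuity of $f$ on an enlarged compact set $K'\subseteq\mathcal U$, and Theorem~\ref{T:approximationdiscrete} applied to $K'$. Your explicit choice of $K'$, together with the bound $|z-z_\varepsilon|\leqslant\varepsilon/\sqrt2$ for $\varepsilon\leqslant\min\{1,a\}$, makes precise the step the paper only asserts, namely that the rounded points $z_\varepsilon$ stay in a fixed compact set.
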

    \begin{proof}
    Since
    $
    \left|x-\varepsilon m_\varepsilon(z)\right|
    \leqslant \varepsilon/2$ and $
    \left|y-\varepsilon n_\varepsilon(z)\right|
    \leqslant \varepsilon/2$, we have $|z-z_\varepsilon|
    \leqslant \varepsilon$. Hence,
    \[
    \lim_{\varepsilon\to 0^+}\sup_{z\in K}|z-z_\varepsilon|=0.
    \]
    Since $K\cap \partial\mathcal U=\emptyset$, there exists a compact set
    $K'\subseteq\mathcal U$, with $K'\cap \partial\mathcal U=\emptyset$, such that
    $z_\varepsilon\in K'$ for every $z\in K$ and every sufficiently small
    $\varepsilon>0$. By the triangle inequality,
    \[
    |f(z)-h(z_\varepsilon)|
    \leqslant
    |f(z)-f(z_\varepsilon)|
    +
    |f(z_\varepsilon)-h(z_\varepsilon)|.
    \]
    Therefore,
    \[
    \sup_{z\in K}|f(z)-h(z_\varepsilon)|
    \leqslant
    \sup_{z\in K}|f(z)-f(z_\varepsilon)|+
    \sup_{\substack{m,n:\\
    \varepsilon m+i\varepsilon n\in K'}}
    |f(\varepsilon m+i\varepsilon n)
    -
    h(\varepsilon m+i\varepsilon n)|.
    \]
    The first term tends to $0$ because $f$ is uniformly continuous on $K'$. The second term tends to $0$ by the previous theorem applied to $K'$. Hence,
    \[
    \lim_{\varepsilon\to 0^+}\sup_{z\in K}|f(z)-h(z_\varepsilon)|=0.
    \]
    \end{proof}
    
    \section{Characterization of $J(H^p)$ and duality}\label{s:Hp_duality}
    We extend the Paley--Wiener characterization of Theorem~\ref{t:H^2_PW}
    to the spaces $H^p$, $1\leqslant p<\infty$. We shall use the basic facts on
    periodic distributions and their Fourier transform recalled in
    Subsection~\ref{ss:distributions}. After that, we briefly discuss the boundedness of the
    Szeg\H{o} projection on $\ell^p$ and use it to identify the dual space of
    $H^p$ for $1<p<\infty$.
    
    \begin{thm}\label{t:H^p_PW}
    For every $1\leqslant p<\infty$,
    \[
    J(H^p)=\{f\in \ell^p(\mathbb Z): \operatorname{supp}\mathcal F f\subseteq [0,\pi]\},
    \]
    where $\mathcal{F}$ is the Fourier transform in the sense of distributions.
    \end{thm}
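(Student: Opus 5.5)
The plan is to treat the two inclusions separately, moving between $\ell^p(\mathbb Z)$ and periodic distributions with the boundary reproducing formula \eqref{E:reproducingformulaconvolution}. Two tools do most of the work. The first is the identity \eqref{E:C=F^{-1}}, which says that on $\mathbb Z$ convolution with $C$ is the Fourier multiplier $\chi_{[0,\pi]}$. The second is the uniqueness Lemma~\ref{l:uniqueness_harmonic}. Throughout, the distributional pairing \eqref{E:distributionalFourier} is normalized so that it agrees with the $L^2$ pairing when $p\leqslant2$. Any reflection $t\mapsto -t$ hidden in that convention only affects bookkeeping and not the argument.

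\emph{Inclusion $J(H^p)\subseteq\{\operatorname{supp}\mathcal F f\subseteq[0,\pi]\}$.} For $p\leqslant 2$ this is immediate. Indeed $\ell^p\subseteq\ell^2$, so $H^p\subseteq H^2$ by Theorem~\ref{t:Hp_boundary_banach}, and Theorem~\ref{t:H^2_PW} applies. For general $p$, I fix $f\in H^p$ and a test function $\psi\in C^\infty(\mathbb T)$ with $\operatorname{supp}\psi\subseteq(-\pi,0)$, and set $\phi=\mathcal F^{-1}\psi\in\mathcal S(\mathbb Z)$. Using $Jf=Jf\ast C$, I write
\[
\langle\mathcal F(Jf),\psi\rangle=\sum_m\sum_j f(j)C(m-j)\phi(m).
\]
Fubini is justified because $|C|\ast|\phi|\in\ell^{p'}$. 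This holds by Young's inequality when $p>1$, since $C\in\ell^{p'}$ by its $1/m$ decay (Lemma~\ref{l:C_expansion}); for $p=1$ it holds trivially since $C$ is bounded. After exchanging the sums, the inner sum $\sum_m C(m-j)\phi(m)$ is, by \eqref{E:C=F^{-1}}, a Fourier coefficient of $\chi_{[0,\pi]}\psi$, which vanishes identically. Hence $\langle\mathcal F(Jf),\psi\rangle=0$.

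\emph{Reverse inclusion.} For $p=1$ the function $\mathcal Fg$ is continuous and vanishes on $[-\pi,0]$, and $g\in\ell^2$. So Theorem~\ref{t:H^2_PW} produces $F\in H^2$ with $JF=g$, and it remains only to control the $H^1$-norm, which I handle at the end. For $1<p<\infty$ I define
\[
F(m+in)=\sum_j g(j)\,C(m+in-j),\qquad m+in\in\mathbb Z^2_+ .
\]
The series converges absolutely by H\"older's inequality and Lemma~\ref{l:kernel_asymptotic}, and $F$ is discrete holomorphic by Lemma~\ref{l:C_expansion}$(i)$.

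\emph{Main obstacle: showing $F|_{\mathbb Z}=g\ast C=g$.} The symbol $\chi_{[0,\pi]}$ jumps exactly at $0$ and $\pi$, where $\mathcal Fg$ may well be concentrated, so one cannot simply multiply distributions. I would proceed in three steps.
\begin{enumerate}[label=(\roman*)]
\item Observe that $g\mapsto g\ast C=\tfrac12(\operatorname{Id}+i\mathcal H_{\operatorname{odd}})g$ is bounded on $\ell^p$ for $1<p<\infty$, since $\mathcal H_{\operatorname{odd}}$ is a discrete Hilbert-type transform.
\item Approximate $g$ in $\ell^p$ by finitely supported sequences. Because the pairing with $\mathcal F^{-1}\psi\in\ell^{p'}$ is continuous, this yields $\langle\mathcal F(g\ast C),\psi\rangle=\langle\mathcal Fg,\chi_{[0,\pi]}\psi\rangle$ for every test function $\psi$ supported away from $\{0,\pi\}$.
\item Conclude that $\mathcal F(g\ast C-g)$ is supported in $\{0,\pi\}$. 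Such a distribution is a finite combination of derivatives of Dirac masses, so $g\ast C-g$ is of the form $p_1(m)+(-1)^m p_2(m)$ with polynomials $p_1,p_2$. Since this sequence lies in $\ell^p$ with $p<\infty$, it must vanish, and hence $F(m)=g(m)$.
\end{enumerate}

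\emph{The $H^p$ bound.} Set $U(m+in)=F(m+in)-\sum_j g(j)P_n(m-j)$. This function is harmonic in $\mathbb Z^2_+$ and vanishes on $\mathbb Z$. It also tends to $0$ at infinity: along vertical directions this follows from the kernel bounds \eqref{eq:K_norm_p}, and along horizontal lines both terms are null sequences, exactly as in the proof of Proposition~\ref{p:reproducing_poisson}. Lemma~\ref{l:uniqueness_harmonic} then gives $U\equiv0$, so $F(\cdot+in)=g\ast P_n$. By Lemmas~\ref{l:Poisson_non_negative} and~\ref{l:P_n_norm_1} and Young's inequality, $\|F(\cdot+in)\|_{\ell^p}\leqslant\|g\|_{\ell^p}$ for every $n$, so $F\in H^p$ with $JF=g$. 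The same Poisson argument gives the $H^1$ bound in the case $p=1$.
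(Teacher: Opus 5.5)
Your proof is correct, and your forward inclusion is the paper's argument (Fubini plus the multiplier identity \eqref{E:C=F^{-1}}). Your caution about the pairing is also apt: as literally written, $\sum_n u(n)\mathcal F^{-1}\psi(n)=\int_{-\pi}^{\pi}\psi(t)\,\mathcal Fu(-t)\operatorname{d}\!t$ for $u\in\ell^1$, so a consistent convention has to be fixed.

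The two routes genuinely differ in the reverse inclusion.

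\emph{The paper's route.} It extends the boundary sequence by the Poisson integral, $(m+in)\mapsto(g\ast P_n)(m)$. The boundary values and the uniform bound $\sup_n\|g\ast P_n\|_{\ell^p}\leqslant\|g\|_{\ell^p}$ then come for free from $P_0=\delta_0$ and $\|P_n\|_{\ell^1}=1$. The only work is discrete holomorphicity. The paper checks that the Fourier transform of $S_1P_{n+1}-P_n+iP_{n+1}-iS_1P_n$ vanishes on $[0,\pi]$. Hence $\mathcal F$ of its convolution with $g$ is supported in $\{0,\pi\}$, and the polynomial-growth argument forces it to vanish.

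\emph{Your route.} You extend by the Cauchy integral, so holomorphicity is free, but you pay twice. First, the same finite-support argument is needed to show $g\ast C=g$ on $\mathbb Z$. Second, the uniform $\ell^p$ bound requires rerunning the harmonic-uniqueness argument of Proposition~\ref{p:reproducing_poisson}, via Lemma~\ref{l:uniqueness_harmonic}, to identify the Cauchy extension with the Poisson one.

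\emph{Trade-offs.} The paper's route is shorter and handles all $1\leqslant p<\infty$ at once. Yours exhibits the extension directly as the Cauchy integral of its boundary values and shows that the Cauchy and Poisson representations coincide.

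\emph{A simplification.} Your appeal to the $\ell^p$-boundedness of $\mathcal H_{\operatorname{odd}}$ in step (i) can be dropped.
\begin{enumerate}
\item The Fubini computation from your forward inclusion already gives $\langle\mathcal F(g\ast C),\psi\rangle=\langle\mathcal Fg,\chi_{[0,\pi]}\psi\rangle$ for every $\psi$ vanishing near $0$ and $\pi$.
\item The fact that $g\ast C-g\in c_0$, since $C|_{\mathbb Z}$ is bounded and decays like $1/m$, suffices to rule out the terms $p_1(m)+(-1)^mp_2(m)$.
\end{enumerate}
With that change your Cauchy-integral argument covers $p=1$ as well: the decay at infinity of $U$ follows from \eqref{eq:K_norm_p} with $p=\infty$. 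The separate detour through $H^2$ then becomes unnecessary.
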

    
    \begin{proof}
    The inclusion
    \[
    J(H^p)\subseteq \{f\in \ell^p(\mathbb Z): \operatorname{supp}\mathcal F f\subseteq [0,\pi]\}
    \]
    is in essence analogous to that in Theorem \ref{t:H^2_PW}, with the only foresight that the Fourier transform is intended in the distributional sense and thus one has to proceed with more caution. 
    
    By \eqref{eq:cauchy_half_plane} we have the identity $f=f\ast C$, for every $f\in J(H^p)$. 
    To prove that $\operatorname{supp}\mathcal F f\subseteq [0,\pi]$ we take $\psi\in C_c^\infty(-\pi,0)$, and we show that $\mathcal{F}f(\psi)=\mathcal{F}(f\ast C)(\psi)=0$. By \eqref{E:distributionalFourier},
    \begin{align}\label{E:distribuzionenulla0}
      \nonumber  \langle \mathcal F(f\ast  C),\psi\rangle &=\sum_{n=-\infty}^{+\infty} \left(\sum_{m=-\infty}^{+\infty} f(m)C(n-m)\right) \mathcal{F}^{-1}\psi(n)\\
        &=\sum_{m=-\infty}^{+\infty} f(m)\left(\sum_{n=-\infty}^{+\infty} C(n-m) \mathcal{F}^{-1}\psi(n)\right),
    \end{align}
    where in the last equality we can use Fubini's Theorem to exchange the order of the sums, since $f\in\ell^p,$ $C\in\bigcap_{q>1}\ell^q$ (see Lemma \ref{l:C_expansion}) and $\mathcal{F}^{-1}\psi$ is rapidly decaying. Since $\mathcal{F}C=(2\pi)^{-\frac{1}{2}}\chi_{[0,\pi]}$ (see \eqref{E:C=F^{-1}}) and $\mathcal{F}$ is unitary, we have that for every $m\in\mathbb{Z}$
    \begin{align*}
        \sum_{n=-\infty}^{+\infty} C(n-m) \mathcal{F}^{-1}\psi(n) &= \int_{-\pi}^{\pi}\mathcal{F}\big(C(\cdot-m)\big)(t)\psi(t)\operatorname{d}\!t\\ 
        &=\frac{1}{\sqrt{2\pi}}\int_{0}^{\pi}e^{imt}\psi(t)\operatorname{d}\!t=0,
    \end{align*}
    thus proving that $\operatorname{supp}\mathcal F f\subseteq [0,\pi]$.

    To prove the converse we need a little extra work. Let
    \[
    f\in \ell^p(\mathbb Z),
    \qquad
    \operatorname{supp}\mathcal F f\subseteq[0,\pi],
    \]
    and set
    \[
    \widetilde f(m+in)=(f*P_n)(m)=\sum_{j=-\infty}^{+\infty}f(j)P_n(m-j),
    \qquad n\geqslant0,
    \]
    where $P_n$ is the Poisson kernel.
    Since \(\|P_n\|_{\ell^1}=1\) for every $n$, Young's
    inequality gives
    \[
    \sup_{n\geqslant 0}\|\widetilde f(\cdot+in)\|_{\ell^p}
    =\sup_{n\geqslant 0}
    \|f*P_n\|_{\ell^p}
    \leqslant
    \|f\|_{\ell^p}.
    \]
    Moreover, since \(P_0=\delta_0\), we have \(J\widetilde f=f\). Hence, to prove that $\widetilde f\in H^p$ it only remains to prove that $\widetilde f$ is discrete holomorphic. Namely, we have to  show that, for every $m+in\in\mathbb Z^2_+$,
    \[
    \widetilde f(m+1+i(n+1))-\widetilde f(m+in)+i\big(\widetilde f(m+i(n+1))-\widetilde f(m+1+in)\big)=0.
    \]
    Equivalently,
    \[
    f*
    \big(S_1 P_{n+1}-P_n+iP_{n+1}-iS_1 P_n\big)(m)=0,\qquad m+in\in\mathbb Z^2_+,
    \]
    where $S_1 P_n(m)=P_n(m+1)$. 
    Thus, setting
    \[
    g_n
    =
    S_1 P_{n+1}-P_n+iP_{n+1}-iS_1 P_n,\qquad n\geqslant 0,
    \]
    it is enough to prove 
    $\mathcal F(f\ast g_n)=0$ for every $n\geqslant 0$, where the Fourier transform is again to be taken in the sense of distributions. First, we prove that the support $\supp(\mathcal F(f\ast  g_n))$ is finite. 
    
    Since for every $n\geqslant 0$ the Poisson kernel $P_n$ is a well-defined $\ell^1$-function, its distributional Fourier transform coincides with its proper Fourier transform. Considering $\varphi_n(t)=\Big(\frac{\cos|t|}{1+\sin|t|}\Big)^n$ as in \eqref{eq:Poisson_coefficient}, we have that 
    \begin{align*}
     P_n(m)=\frac{1}{2\pi}\int_{-\pi}^{\pi}\varphi_n(t) e^{imt}\operatorname{d}\!t=\frac{1}{\sqrt{2\pi}}\mathcal{F}^{-1}\varphi_n(m).
    \end{align*}
    Thus,
    \begin{align*}
     \sqrt{2\pi} \mathcal F g_n(t)&= e^{it} \Big(\frac{\cos|t| }{1+\sin|t|}\Big)^{n+1}- \Big(\frac{\cos|t| }{1+\sin|t|}\Big)^{n}+i \Big(\frac{\cos|t| }{1+\sin|t|}\Big)^{n+1}-ie^{it} \Big(\frac{\cos|t| }{1+\sin|t|}\Big)^{n}\\
    &=\Big(\Big(\frac{\cos|t| }{1+\sin|t|}\Big)(e^{it}+i)-1-ie^{it}\Big)\Big(\frac{\cos|t| }{1+\sin|t|}\Big)^n.
    \end{align*}
    This identity and the identity
    \[
    \frac{\cos t}{1+\sin t}=\frac{1+ie^{it}}{i+e^{it}},\quad t\in [0,\pi],
    \]
    guarantee that $\mathcal F g_n(t)=0$ for $t\in [0,\pi]$. Now take $\eta$ in $C_c^\infty((0,\pi)).$ As it was done in \eqref{E:distribuzionenulla0}, since $g_n\in\ell^1$ for every $n$ we can write
    \begin{equation}\label{E:distribuzionenulla1}
         \langle \mathcal F(f\ast  g_n),\eta\rangle =\sum_{m=-\infty}^{+\infty} \left(f(m)\sum_{k=-\infty}^{+\infty} g_n(k-m) \mathcal{F}^{-1}\eta(k)\right).
    \end{equation}
    
    We rewrite the right-hand side in \eqref{E:distribuzionenulla1}. For $n\geqslant 0$,
    \begin{align*}
        \sum_{k=-\infty}^{+\infty} g_n(k-m) \mathcal{F}^{-1}\eta(k)&=\frac{1}{\sqrt{2\pi}}\sum_{k=-\infty}^{+\infty} g_n(k) \int_{-\pi}^{\pi}\eta(x)e^{ i(k+m)x}\operatorname{d}\!x\\
        &=\frac{1}{\sqrt{2\pi}}\int_{-\pi}^{\pi}\eta(x)\left(\sum_{k=-\infty}^{+\infty} g_n(k)e^{ikx}\right)e^{imx}\operatorname{d}\!x \\
        &=\int_{-\pi}^{\pi}\eta(x)\mathcal{F}g_n(x)e^{imx}\operatorname{d}\!x=0,
    \end{align*}
    since $\mathcal F g_n(t)=0$ for $t\in [0,\pi]$ and $\eta\in C_c^\infty((0,\pi)).$ We remark that in the second equality we can exchange the series and the integral because $g_n\in\ell^1$, and we obtain the third equality since $\mathcal{F}g_n$ is continuous and piecewise $C^1$. We proved that $\supp(\mathcal F(f\ast  g_n))\subseteq [-\pi,0]\cup\{\pi\}$.

    Now, take $\psi$ in $C_c^\infty((-\pi,0)).$ Since $g_n\in\ell^1$ for every $n$ we can write
    \[
     \langle \mathcal F(f\ast  g_n),\psi\rangle =\sum_{m=-\infty}^{+\infty} g_n(m)\left(\sum_{k=-\infty}^{+\infty} f(k-m) \mathcal{F}^{-1}\psi(k)\right).
    \]
    We focus on the right-hand side. For every $m$,
    \begin{align*}
        \sum_{k=-\infty}^{+\infty} f(k-m) \mathcal{F}^{-1}\psi(k)&=\sum_{k=-\infty}^{+\infty} f(k) \mathcal{F}^{-1}\psi(k+m)\\
        &=\sum_{k=-\infty}^{+\infty} f(k) \mathcal{F}^{-1}\big(e^{im(\cdot)}\psi\big)(k)=\langle \mathcal{F}f,e^{im(\cdot)}\psi\rangle=0,
    \end{align*}
    since by assumption $\supp(\mathcal Ff)\subset [0,\pi]$ and $\psi\in C_c^\infty((-\pi,0))$.

     We conclude that $\supp(\mathcal F(f\ast g_n))$ is finite, and it is contained in $\{-\pi,0,\pi\}.$ Since every distribution on the torus has finite order (see \cite[Theorem 6.2]{beals}), and every distribution of finite support and finite order is a finite sum of derivatives of Dirac deltas (see \cite[Theorem 2.3.4]{hormander1983analysis}), we conclude that
    \[
    \mathcal F(f\ast g_n) = \sum_{k=0}^{N_0} \alpha_k \delta_0^{(k)} + \sum_{k=0}^{N_1} \beta_k \delta_{\pi}^{(k)},
    \]
    where 
    \[
    \delta_y^{(k)}(\psi):=(-1)^k \psi^{(k)}(y), \qquad \psi\in C^\infty(\mathbb{T}).
    \]
    We can disregard the point $-\pi$ in the support because of the periodicity. Computing the (distributional) Fourier coefficients of a Dirac delta yields
    \[
    \mathcal{F}^{-1}(\delta_y^{(k)})(h)=\frac{1}{\sqrt{2\pi}}(\delta_y^{(k)})(e^{-ih(\cdot)})=\frac{1}{\sqrt{2\pi}} (-1)^k(-ih)^k e^{-ihy}, \qquad h\in\mathbb Z.
    \]
    In particular, the growth in $h$ of the Fourier coefficients of $\mathcal F(f\ast g_n)$ is that of a polynomial. On the other hand, the $h$-th Fourier coefficient of $\mathcal F(f\ast g_n)$ is exactly $f\ast g_n(h)$, and this is an $\ell^p$-sequence. Then, necessarily, $\mathcal{F}(f\ast g_n)=0$.
    \end{proof}
    
    We discuss the Szeg\H o projection $P\colon\ell^p\to J(H^p)$, introduced in the case $p=2$ in Theorem~\ref{t:H^2_PW}. 
    
    \begin{prop}\label{P:szegoproj}
        Let $1<p<\infty$. The Szeg\H o projection 
        \begin{equation*} 
        Pf=f\ast C=\mathcal{F}^{-1}\big(\chi_{[0,\pi]}\mathcal{F}f\big), \qquad f\in\ell^p\cap\ell^2,
    \end{equation*}
    extends to a bounded operator $P\colon\ell^p\to\ell^p$. Moreover, the range $P(\ell^p)$ coincides with $J(H^p)$ and $Pf=f$ for every $f$ in $J(H^p)$.  
    \end{prop}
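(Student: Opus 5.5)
The plan is to obtain $\ell^p$-boundedness from the representation \eqref{Eq:P_Hilbert}, that is $P=\frac12(\operatorname{Id}+i\mathcal H_{\operatorname{odd}})$, which holds on $\ell^2$ and hence on $\ell^p\cap\ell^2$. So it suffices to show that $\mathcal H_{\operatorname{odd}}f=f\ast Q_0$, with $Q_0(j)=\frac{2}{\pi j}$ for odd $j$ and $0$ otherwise, is bounded on $\ell^p(\mathbb Z)$ for $1<p<\infty$.

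To get this, I split $f$ into its even and odd parts, $f_e=f\chi_{2\mathbb Z}$ and $f_o=f\chi_{2\mathbb Z+1}$. Since $Q_0$ is supported on odd integers, $f_e\ast Q_0$ is supported on odd integers and $f_o\ast Q_0$ on even integers. Writing $f_e(2k)=a_k$, we get
\[
(f_e\ast Q_0)(2l+1)=\frac{2}{\pi}\sum_{k}\frac{a_k}{2(l-k)+1}=\frac1\pi\sum_k \frac{a_k}{l-k+\frac12},
\]
and the odd part gives an analogous expression. These are shifted discrete Hilbert transforms with kernel $1/(j+\frac12)$, $j\in\mathbb Z$. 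Their kernel differs from the classical discrete Hilbert kernel $\frac{1}{j}\chi_{j\neq0}$ by $O(j^{-2})$, which is in $\ell^1$. The classical discrete Hilbert transform is bounded on $\ell^p$ for $1<p<\infty$ (M.~Riesz, or via Calderón--Zygmund theory on $\mathbb Z$, or by transference from the Hilbert transform on $\mathbb R$). Young's inequality then handles the $\ell^1$ remainder, so $\|\mathcal H_{\operatorname{odd}}f\|_{\ell^p}\lesssim\|f\|_{\ell^p}$. By density of $\ell^p\cap\ell^2$ in $\ell^p$, $P$ extends boundedly. Moreover, for every $f\in\ell^p$ the extension is still given by the absolutely convergent convolution $f\ast C$, since $C\in\ell^{p'}$ for $p'>1$ by Lemma \ref{l:kernel_asymptotic}.

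Next, $Pf=f$ for $f\in J(H^p)$ is exactly the boundary reproducing formula \eqref{E:reproducingformulaconvolution}, which comes from Proposition \ref{p:cauchy_half_plane}. This gives $J(H^p)\subseteq P(\ell^p)$. For the reverse inclusion I use Theorem \ref{t:H^p_PW}: it suffices to show $\operatorname{supp}\mathcal F(f\ast C)\subseteq[0,\pi]$ for $f\in\ell^p$. This is the Fubini computation \eqref{E:distribuzionenulla0} from the proof of Theorem \ref{t:H^p_PW}. That computation only used $f\in\ell^p$, $C\in\bigcap_{q>1}\ell^q$ and $\mathcal F C=(2\pi)^{-1/2}\chi_{[0,\pi]}$, so it applies verbatim and yields $\langle\mathcal F(f\ast C),\psi\rangle=0$ for every $\psi\in C_c^\infty(-\pi,0)$. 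Hence $P(\ell^p)\subseteq J(H^p)$.

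The main obstacle is the $\ell^p$-boundedness of the discrete Hilbert-type operator. I would cite the classical Riesz theorem for the discrete Hilbert transform and reduce $\mathcal H_{\operatorname{odd}}$ to it through the parity splitting and the $\ell^1$ perturbation described above. The remaining step is routine: checking that the density extension agrees with the pointwise convolution, which follows from Hölder's inequality.
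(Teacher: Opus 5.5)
Your proof is correct. For the boundedness it follows the paper's route through $P=\frac12(\operatorname{Id}+i\mathcal H_{\operatorname{odd}})$. The only difference there is one of detail. The paper simply cites the $\ell^p$-boundedness of a discrete Hilbert-type transform. You justify it: splitting by parity turns $\mathcal H_{\operatorname{odd}}$ into two half-integer-shifted discrete Hilbert transforms with kernels $\frac{1}{\pi(j\pm\frac12)}$, which are the classical kernel plus an $\ell^1$ error. You also check that the extension is still the absolutely convergent convolution $f\ast C$, which the paper uses tacitly.

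The genuine divergence is in the inclusion $P(\ell^p)\subseteq J(H^p)$.
\begin{itemize}
\item The paper observes that $P\mathrm{e}_n=K_n$, with $K_n\in J(H^p)$ by Remark~\ref{Rmk:kernelsHp}. Boundedness of $P$ and density of finitely supported sequences then give $P(\ell^p)\subseteq\overline{\operatorname{span}}\{K_n\}\subseteq J(H^p)$. The last step holds because $J(H^p)$ is closed, being an isometric image of a Banach space.
\item You instead feed $f\ast C$ into the distributional Paley--Wiener theorem, Theorem~\ref{t:H^p_PW}, after verifying its spectral support by the computation in \eqref{E:distribuzionenulla0}.
\end{itemize}
Neither route is circular, since Theorem~\ref{t:H^p_PW} is proved without the Szeg\H{o} projection on $\ell^p$.

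The paper's argument is softer. It needs only that the kernels lie in $H^p$, and it is independent of the Poisson-extension and point-supported-distribution machinery behind Theorem~\ref{t:H^p_PW}. Yours rests on that heavier theorem. In exchange, it identifies the range directly through the spectral description $P=\mathcal F^{-1}\chi_{[0,\pi]}\mathcal F$ on all of $\ell^p$.
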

    \begin{proof}
    By \eqref{Eq:P_Hilbert}, it is clear that the boundedness of $P\colon\ell^p\to\ell^p$ is equivalent to that of $\mathcal{H}_{\operatorname{odd}}$. Since $\mathcal H_{\operatorname{odd}}$ is a discrete Hilbert--type transform, its $\ell^p$-boundedness is well-known and classical; see for instance \cite{HMW} or \cite{BK}. Consequently, the Szeg\H o projection $P$ extends to a bounded operator on $\ell^p$ for $1<p<\infty$. That $Pf=f$ for every $f\in J(H^p)$ follows from \eqref{E:reproducingformulaconvolution}. In particular, $P(\ell^p)\supseteq J(H^p)$. Notice also that the formula 
    \begin{equation}\label{eq:hilbert_lp}
    f(m)=Pf(m)=\frac{2i}{\pi}\sum_{j\in2\mathbb Z+1}\frac{f(m-j)}{j}, \qquad m\in\mathbb{Z},
    \end{equation}
    holds for $\ell^p$ functions as well. 
    
    To show the reverse inclusion, recall that $P\text{e}_n=K_n$, see \eqref{E:Pe_n=K_n}. Then, since $\{\text{e}_n\}_{n\in\mathbb{Z}}$ is a Schauder basis for $\ell^p$, we have that
    \[
    P(\ell^p)\subseteq \overline{\operatorname{span}\{P\text{e}_n\}_{n\in\mathbb{Z}}}=\overline{\operatorname{span}\{K_n\}_{n\in\mathbb{Z}}}\subseteq \overline{J(H^p)}=J(H^p),
    \]
    since $K_n\in J(H^p)$ for every $n\in\mathbb{Z}$, see Remark \ref{Rmk:kernelsHp}.
    \end{proof}
    
    After settling these standard properties for the Szeg\H o projection on $\ell^p$, we are in a position to characterize the duality of $H^p$ spaces. The proof is the adaptation of a classical argument, we include for the sake of completeness.
    
    \begin{thm}
        Let $1<p<\infty$. The dual space $(H^p)^*$ can be identified with $H^{p'},$ where $p'$ is the conjugate exponent of $p$. More precisely, there exists an isomorphism of Banach spaces $\Phi\colon H^{p'}\to(H^p)^*$.
    \end{thm}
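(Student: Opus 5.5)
Since $J$ is an isometry of $H^p$ onto $J(H^p)$, a closed subspace of $\ell^p(\mathbb Z)$, the problem reduces to identifying the dual of $J(H^p)\subseteq\ell^p$. The natural pairing is
\[
\Phi(g)(f)=\langle f,g\rangle:=\sum_{m=-\infty}^{+\infty}f(m)\overline{g(m)},\qquad f\in H^p,\ g\in H^{p'},
\]
which is conjugate-linear in $g$. We will therefore read ``isomorphism'' in the usual sense for Hardy space duality, namely as a bounded bijection with bounded inverse; alternatively one can drop the conjugate to obtain a linear map. By H\"older's inequality and Theorem~\ref{t:Hp_boundary_banach}, $|\Phi(g)(f)|\leqslant\|f\|_{H^p}\|g\|_{H^{p'}}$, so $\Phi$ is bounded with $\|\Phi\|\leqslant1$.

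\textbf{Surjectivity.} Let $\Lambda\in(H^p)^*$. Via $J$ we view $\Lambda$ as a functional on $J(H^p)$ and extend it by Hahn--Banach to $\ell^p$ without increasing its norm. By Riesz representation for $\ell^p$ we obtain $h\in\ell^{p'}$ with $\|h\|_{\ell^{p'}}=\|\Lambda\|$ and $\Lambda(f)=\sum f(m)\overline{h(m)}$ for all $f\in J(H^p)$. We then set $g=Ph$, where $P$ is the Szeg\H{o} projection on $\ell^{p'}$ given by Proposition~\ref{P:szegoproj} (note $1<p'<\infty$). That proposition gives $g\in J(H^{p'})$ and $\|g\|\leqslant\|P\|_{p'}\|\Lambda\|$.

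The key identity to establish is the self-adjointness relation
\[
\langle Pf,h\rangle=\langle f,Ph\rangle,\qquad f\in\ell^p,\ h\in\ell^{p'}.
\]
We prove it first for finitely supported $f,h$. There it follows from Theorem~\ref{t:H^2_PW}, where $P$ is an orthogonal projection, or directly from $\overline{C(-m)}=C(m)$, which is visible in the integral formula $C(m)=\frac1{2\pi}\int_0^\pi e^{imt}\,dt$. The identity then extends to all $f\in\ell^p$, $h\in\ell^{p'}$ by density of finitely supported sequences together with the boundedness of $P$ on $\ell^p$ and on $\ell^{p'}$. Since $Pf=f$ for $f\in J(H^p)$, this gives $\Lambda(f)=\langle f,h\rangle=\langle Pf,h\rangle=\langle f,Ph\rangle=\Phi(g)(f)$, which proves surjectivity.

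\textbf{Injectivity and lower bound.} Let $g\in J(H^{p'})$ and choose $u\in\ell^p$ with $\|u\|_{\ell^p}=1$ and $\langle u,g\rangle=\|g\|_{\ell^{p'}}$; this is the usual extremal sequence. Applying the same duality identity with the roles of $p$ and $p'$ swapped,
\[
\|g\|_{\ell^{p'}}=\langle u,g\rangle=\langle u,Pg\rangle=\langle Pu,g\rangle=\Phi(g)(Pu),
\]
and $Pu\in J(H^p)$ with $\|Pu\|\leqslant\|P\|_p$. Hence $\|\Phi(g)\|\geqslant\|P\|_p^{-1}\|g\|_{H^{p'}}$. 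This gives injectivity and a bounded inverse, completing the proof that $\Phi$ is an isomorphism.

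The only step requiring real care is the extension of $\langle Pf,h\rangle=\langle f,Ph\rangle$ from finitely supported sequences to all of $\ell^p\times\ell^{p'}$. It relies on $P$ being the bounded extension from Proposition~\ref{P:szegoproj} and on the convolution formula $f\ast C$ agreeing with this extension. For the latter we note that $f\ast C$ converges pointwise for $f\in\ell^p$, because $C\in\ell^{p'}$ by Lemma~\ref{l:kernel_asymptotic}.
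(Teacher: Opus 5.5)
Your proof is correct. The surjectivity half is exactly the paper's argument: the H\"older bound, Hahn--Banach extension to $\ell^p$, Riesz representation, applying the Szeg\H{o} projection to the representing sequence, and extending $\langle Pf,h\rangle=\langle f,Ph\rangle$ from finitely supported sequences by density and boundedness of $P$.

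Where you differ is the lower bound, and there your version is the more complete one. The paper reads the lower bound off its construction, via $\|g\|_{H^{p'}}\leqslant\|P\|_{\ell^{p'}\to\ell^{p'}}\|\phi\|_{\ell^{p'}}=\|P\|_{\ell^{p'}\to\ell^{p'}}\|\Lambda\|$. That inequality only controls the particular preimage $g=J^{-1}P\phi$ it built. Concluding that $\Phi$ is an isomorphism therefore tacitly uses injectivity of $\Phi$, which the paper does not state. Injectivity is true, for example by pairing with $K_n=P\mathrm{e}_n\in J(H^p)$ and using the reproducing formula.

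Your extremal-sequence argument, $\|g\|_{\ell^{p'}}=\Phi(g)(Pu)$ with $\|Pu\|_{\ell^p}\leqslant\|P\|_{\ell^p\to\ell^p}$, gives $\|g\|_{H^{p'}}\leqslant\|P\|_{\ell^p\to\ell^p}\|\Phi(g)\|$ for every $g\in H^{p'}$ directly. Injectivity and boundedness of the inverse then follow at once. It costs nothing extra, since it reuses the same self-adjointness identity.

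Your remark that $\Phi$ is conjugate-linear is also a point the paper passes over.
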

    
    \begin{proof}
    For simplicity, set $q=p'$. Given $f\in H^p$ and $g\in H^q,$ consider the duality given by 
    \[
    \langle f,g\rangle = \sum_{m=-\infty}^{+\infty}f(m)\overline{g(m)}.
    \]
    For $g\in H^q$, the operator
    \[
    \Phi_g \colon H^p\to\mathbb{C}, \qquad f\mapsto \langle f,g\rangle,
    \]
    is well-defined and bounded, with $\|\Phi_g\|_{(H^p)^*}\leqslant \|g\|_{H^q}$, as a consequence of the H\"older inequality. In particular, it is well-defined and bounded the mapping
    \[
    \Phi \colon H^q \to (H^p)^*, \qquad g\mapsto \Phi_g.
    \]
    To show that it is surjective we use a standard argument using the Szeg\H o projection $P$ (see Proposition \ref{P:szegoproj} for its properties) and the well-known results on the duality of $\ell^p$ spaces. To distinguish between the duality of $\ell^p$ and $H^p$ spaces, we introduce the two different notations $\langle\cdot,\cdot\rangle_{\ell^2}$ and $\langle\cdot,\cdot\rangle_{H^2}$, even though they represent essentially the same quantity. 
    
    Consider $\Lambda$ in $(H^p)^*$. Since $H^p$ is isometrically isomorphic to $J(H^p)\subseteq \ell^p$, we can transfer the operator $\Lambda$ to $J(H^p)$, that is a closed subspace of $\ell^p$. Using Hahn--Banach, we can extend $\Lambda$ to an operator $\widetilde{\Lambda}$ on $\ell^p,$ preserving its norm. Now, using the duality of $\ell^p$, we know that there exists $\phi\in\ell^q$ such that
    \[
    \widetilde{\Lambda} f=\langle f,\phi\rangle_{\ell^2}=\sum_{m=-\infty}^{+\infty}f(m)\overline{\phi(m)}, \qquad f\in\ell^p,
    \]
    and $\|\phi\|_{q}=\|\widetilde{\Lambda}\|_{(\ell^p)^*}=\|\Lambda\|_{(H^p)^*}$. Since $P\colon \ell^2\to J(H^2)$ is an orthogonal projection, we have that
    \[
    \langle Pf,h\rangle_{\ell^2} = \langle f,Ph\rangle_{\ell^2} ,\qquad f,h\in\ell^2.
    \]
    In particular, approximating $f\in\ell^p$ and $h\in\ell^q$ with sequences of compact support $(f_n)_n$ and $(h_n)_n$, respectively, by the boundedness of $P$ on $\ell^p$ and $\ell^q$ we have that
    \[
    \langle Pf,h\rangle_{\ell^2} = \lim_n \langle Pf_n,h_n\rangle_{\ell^2} = \lim_n \langle f_n,Ph_n\rangle_{\ell^2} = \langle f,Ph\rangle_{\ell^2}. 
    \]
    Consider now $g:=J^{-1}P\phi\in H^q$. This is well-defined, for $P\phi\in J(H^q)$. Then, for $f\in H^p,$ since $PJf=Jf$ we have that
    \begin{align*}
        \Lambda(f)=\langle PJf, \phi\rangle_{\ell^2}=\langle Jf, P\phi\rangle_{\ell^2}=\langle f, J^{-1}P\phi\rangle_{H^2}=\langle f, g\rangle_{H^2}=\Phi_g(f).
    \end{align*}
    We proved that $\Lambda = \Phi_g$. We also have that
    \[
    \|\Phi_g\|_{(H^p)^*}\leqslant \|g\|_{H^q}\leqslant \|P\|_{\ell^q\to \ell^q}\|\phi\|_{\ell^q}=\|P\|_{\ell^q\to \ell^q}\|\Lambda\|_{(H^p)^*}=\|P\|_{\ell^q\to \ell^q}\|\Phi_g\|_{(H^p)^*},
    \]
    proving that $\Phi$ is an isomorphism of Banach spaces.
    \end{proof}
    
    \section{Uniqueness and sampling on horizontal lines}\label{s:sampling}
    
    Questions related to zero sets, uniqueness sets and sampling are natural in this discrete setting, and may reveal phenomena with no direct
    counterpart in the classical continuous theory. A systematic study of these
    questions is beyond the scope of the present paper. Here we record a few first
    consequences of the boundary structure developed above. In what follows, we
    often identify a function $f$ in $H^p$ with its boundary values and omit the
    restriction operator~$J$ from the notation.
    
    We first record a uniqueness consequence of the boundary relation obtained from
    the Szeg\H{o} projection. In Proposition~\ref{p:uniqueness} this was proved for
    $H^2$. Since formula~\eqref{eq:f_even_odd} remains valid in $H^p$, $1<p<\infty$,
    by Proposition~\ref{P:szegoproj}, the same argument gives the following
    extension.
    
    \begin{prop}
    For every fixed $n\geqslant 0$, the sets $\Lambda_{n}^{\operatorname{even}}$ and $\Lambda_{n}^{\operatorname{odd}}$ are uniqueness sets for $H^p,1<p<\infty$. Namely, given $f$ in $H^p$, if either  
    $
    f|_{\Lambda_n^{\operatorname{even}}}\equiv 0$ or $
    f|_{\Lambda_n^{\operatorname{odd}}}\equiv 0,
    $
    then $f\equiv 0$.
    \end{prop}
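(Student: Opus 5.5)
Fix $f\in H^p$, $1<p<\infty$, and $n\geqslant 0$, and assume (say) $f|_{\Lambda_n^{\operatorname{even}}}\equiv 0$; the odd case is identical. The argument follows the one given for $H^2$ before Proposition~\ref{p:uniqueness}, with the Hilbert space tools replaced by their $\ell^p$ counterparts.

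\emph{Step 1: reduce to the line of height $n$.} Consider the vertical translate $f_n(z)=f(z+in)$. It is discrete holomorphic on $\mathbb Z^2_+$. By Proposition~\ref{p:H^p_norm_2}, $\sup_{k\geqslant 0}\|f_n(\cdot+ik)\|_{\ell^p}\leqslant\|f\|_{H^p}$, so $f_n\in H^p$, and its boundary values are $Jf_n=f(\cdot+in)$.

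\emph{Step 2: the odd values on the line vanish.} Proposition~\ref{P:szegoproj} gives $PJf_n=Jf_n$, and identity \eqref{eq:hilbert_lp} holds for $\ell^p$ sequences. This yields \eqref{eq:f_even_odd} in $H^p$:
\[
f(m+in)=\frac{2i}{\pi}\sum_{j\in2\mathbb Z+1}\frac{f(m-j+in)}{j}.
\]
When $m$ is odd, every $m-j$ with $j$ odd is even. All terms on the right then vanish by hypothesis, so $f(m+in)=0$ for odd $m$. The series converges absolutely, since $f(\cdot+in)\in\ell^p$ and $(1/j)_{j\in 2\mathbb Z+1}\in\ell^{p'}$ for $p>1$; this is where $p>1$ enters. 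Together with the hypothesis, $f(\cdot+in)\equiv 0$.

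\emph{Step 3: propagate upward and downward.} Upward, Theorem~\ref{t:Hp_boundary_banach} applied to $f_n$ gives $\|f_n\|_{H^p}=\|Jf_n\|_{\ell^p}=0$, so $f(m+ik)=0$ for all $k\geqslant n$. Downward, assume $n\geqslant 1$ and $f$ vanishes on the line of height $n$. Holomorphicity on the square with south-west vertex $m+i(n-1)$ gives
\[
f(m+i(n-1))=-if(m+1+i(n-1)),
\]
so $m\mapsto|f(m+i(n-1))|$ is constant. Since this sequence lies in $\ell^p$ with $p<\infty$, the constant is $0$. Iterating down to $k=0$ gives $f\equiv 0$ on $\mathbb Z^2_+$.

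The only step that is not a verbatim copy of the $H^2$ argument is Step 2. It rests on the $\ell^p$-boundedness of $P$ and on the reproducing identity $PJf=Jf$ for $f\in J(H^p)$, both supplied by Proposition~\ref{P:szegoproj}. I therefore expect no real obstacle beyond invoking that proposition correctly.
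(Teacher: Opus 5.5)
Your proof is correct and follows the paper's argument. The paper observes that \eqref{eq:f_even_odd} remains valid in $H^p$ by Proposition~\ref{P:szegoproj} and then repeats the $H^2$ proof: the whole line of height $n$ vanishes, vanishing propagates upward via the boundary norm identity for the vertical translate, and downward via holomorphicity and $\ell^p$-summability, exactly as in your Steps 1--3.

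One small correction: the absolute convergence of the series in Step 2 only needs $(1/j)_{j\in2\mathbb Z+1}\in\ell^{p'}$, which holds for every $p<\infty$ including $p=1$, so that is not where $p>1$ enters. In fact the identity already holds for $1\leqslant p<\infty$ directly from \eqref{E:reproducingformulaconvolution} and the explicit values of $C$ on $\mathbb Z$; the restriction $p>1$ appears only through your citation of Proposition~\ref{P:szegoproj}.
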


In the Hilbert space setting, a curious phenomenon occurs. Given $f$ in $H^2$, at every height $n\geqslant 0$, the series 
\[
\sum_{m=-\infty}^{+\infty}|f(m+in)|^2
\]
  splits exactly on the even and odd numbers. When $n=0$, this yields a sampling result.    
    \begin{prop}\label{p:sampling} Given $f$ in $H^2$ and $n\geqslant 0$, the following identities hold:
     \[
    \sum_{m=-\infty}^{+\infty} |f(2m+in)|^2=\sum_{m=-\infty}^{+\infty}|f(2m+1+in)|^2=\frac12\sum_{m=-\infty}^{+\infty}|f(m+in)|^2.
     \]
     In particular, at the boundary $n=0$,
     \[
     \sum_{m=-\infty}^{+\infty} |f(2m)|^2=\sum_{m=-\infty}^{+\infty}|f(2m+1)|^2=\frac12\|f\|_{H^2}^2,
     \]
     and the sets $\Lambda^{\operatorname{even}}_{0}$ and $\Lambda^{\operatorname{odd}}_{0}$ are sampling sets for $H^2$.  
    \end{prop}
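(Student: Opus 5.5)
The plan is to work on the Fourier side, using the Paley--Wiener characterization of Theorem~\ref{t:H^2_PW}. First I reduce to the boundary case $n=0$. For $f\in H^2$ and $n\geqslant0$, the vertical translate $f_n(z)=f(z+in)$ again belongs to $H^2$, and its boundary values are $f_n(m)=f(m+in)$. So it suffices to prove
\[
\sum_{m}|f(2m)|^2=\sum_m|f(2m+1)|^2=\tfrac12\sum_m|f(m)|^2
\]
for every $f\in J(H^2)$. The identity at height $n$ then follows by applying this to $f_n$.

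For the boundary identity, set $g=\mathcal F(Jf)\in L^2([-\pi,\pi])$. By Theorem~\ref{t:H^2_PW}, $g$ vanishes a.e.\ on $(-\pi,0)$. Let $\tau g(t)=g(t+\pi)$, understood $2\pi$-periodically. I then split $Jf$ into its even and odd parts, $f_e=f\chi_{2\mathbb Z}$ and $f_o=f\chi_{2\mathbb Z+1}$. Since $(-1)^m=e^{im\pi}$, we have $f_e=\tfrac12(f+(-1)^{(\cdot)}f)$, and $\mathcal F\big((-1)^{(\cdot)}f\big)=\tau g$. This gives
\[
\mathcal F f_e=\tfrac12(g+\tau g),\qquad \mathcal F f_o=\tfrac12(g-\tau g).
\]

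The key observation is that $g$ and $\tau g$ have essentially disjoint supports. Indeed, $g$ lives on $[0,\pi]$, while $\tau g$ lives on $[-\pi,0]$, since $t+\pi\in[0,\pi]$ exactly when $t\in[-\pi,0]$. Their overlap is a null set, so by Plancherel
\[
\|f_e\|_{\ell^2}^2=\tfrac14\big(\|g\|_{L^2}^2+\|\tau g\|_{L^2}^2\big)=\tfrac12\|g\|_{L^2}^2=\tfrac12\|f\|_{H^2}^2,
\]
using translation invariance of the $L^2$ norm on the torus and \eqref{eq:F_isometry}. The same computation gives the result for $f_o$. Alternatively, once the even part is known, the odd part follows from $\|f_e\|^2+\|f_o\|^2=\|f\|^2$.

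Sampling then follows immediately. At $n=0$ the identities give $\|f\|_{H^2}^2=2\sum_m|f(2m)|^2=2\sum_m|f(2m+1)|^2$, which is a two-sided (indeed exact, up to the factor $2$) sampling inequality for $\Lambda_0^{\mathrm{even}}$ and $\Lambda_0^{\mathrm{odd}}$. There is no real obstacle here. The only point requiring care is the periodic identification of the shift $\tau$ together with the measure-zero overlap of the supports at $t=0,\pm\pi$. That overlap is harmless for $L^2$ functions.
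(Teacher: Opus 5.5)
Your argument is correct, but it takes a different route from the paper. The paper proves the case $n=0$ with the orthonormal basis $\{E_k\}_{k\in\mathbb Z}$ of Proposition~\ref{T:ONB} and the boundary identity \eqref{Eq:ONBkernels}, namely $E_k=\sqrt2\,K_{-2k}$ on $\mathbb Z$. Parseval together with the reproducing property then gives $\|f\|_{H^2}^2=\sum_k|\langle f,E_k\rangle|^2=2\sum_k|f(-2k)|^2$. The odd case follows from this, and the case $n>0$ is handled by vertical translation exactly as you do.

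You avoid the basis and the kernels altogether. Modulation by $(-1)^m$ becomes the half-period translation $\tau$ on the Fourier side. The support condition of Theorem~\ref{t:H^2_PW} says precisely that $g$ and $\tau g$ do not overlap, so restricting to a parity class produces no aliasing.

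The paper's version is shorter given the machinery already in place. It also reads the identity as saying that the normalized boundary kernels $\{\sqrt2\,K_{2k}\}_{k\in\mathbb Z}$ form an orthonormal basis of $H^2$. Your version is self-contained modulo Theorem~\ref{t:H^2_PW}, treats the two parity classes symmetrically, and makes the mechanism transparent. Your formulas for $\mathcal F f_e$ and $\mathcal F f_o$ also give at once $\mathcal F f_o=(\chi_{(0,\pi)}-\chi_{(-\pi,0)})\,\mathcal F f_e$. This is the Fourier-side form of the Hilbert-type relation \eqref{eq_poisson_convoluzione} between odd and even boundary values, which the paper derives separately and uses for the $H^p$ sampling result in Proposition~\ref{p:sampling_Hp}.
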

    \begin{proof}
   We begin with $n=0$. We prove the identity for $\Lambda^{\operatorname{even}}_0$. The result for $\Lambda^{\operatorname{odd}}_0$ will then follow. The proof is essentially a consequence of the identity \eqref{Eq:ONBkernels}, that connects the orthonormal basis $\{E_k\}_{k\in\mathbb{Z}}$ with the reproducing kernels. We have that
    \[
    \|f\|_{H^2}^2=\sum_{m=-\infty}^{+\infty}|\langle f,E_m\rangle|^2=2\sum_{m=-\infty}^{+\infty}|\langle f,K_{-2m}\rangle|^2=2\sum_{m=-\infty}^{+\infty}|f(-2m)|^2.
    \]
    Now, if $n> 0$, set $f_n:=f(\cdot+in)$. Clearly, $f_n\in H^2$ and we can apply the previous result obtaining
    \[
    \sum_{m=-\infty}^{+\infty} |f(2m+in)|^2=\sum_{m=-\infty}^{+\infty} |f_n(2m)|^2=\frac12\|f_n\|_{H^2}^2=\frac12\sum_{m=-\infty}^{+\infty}|f(m+in)|^2.
    \]
    The identity for odd numbers at height $n$ follows.

    \end{proof}
    This exact splitting of the boundary norm between the two parity classes has 
    no direct analogue in the classical Hardy space on the upper half-plane. We actually have also a $H^p$-version, $1<p<\infty$, of the sampling result above.
    \begin{prop}\label{p:sampling_Hp}
    The sets $\Lambda^{\operatorname{even}}_0$ and
    $\Lambda^{\operatorname{odd}}_0$ are sampling sets for $H^p$,
    $1<p<\infty$. More precisely, there exists a constant $C_p>0$ such that,
    for every $f\in H^p$,
    \[
    \|f|_{\Lambda^{\operatorname{even}}_0}\|_{\ell^p}
    \leqslant
    \|f\|_{H^p}
    \leqslant
    C_p\|f|_{\Lambda^{\operatorname{even}}_0}\|_{\ell^p}.
    \]
    The same conclusion holds with $\Lambda^{\operatorname{even}}_0$ replaced by
    $\Lambda^{\operatorname{odd}}_0$.
    \end{prop}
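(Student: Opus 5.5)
The left inequality is immediate: $\Lambda^{\operatorname{even}}_0\subseteq\mathbb Z$, and by Theorem~\ref{t:Hp_boundary_banach} we have $\|f\|_{H^p}=\|f|_{\mathbb Z}\|_{\ell^p}$. The plan for the right inequality is to use the boundary identity \eqref{eq:hilbert_lp},
\[
f(m)=\frac{2i}{\pi}\sum_{j\in2\mathbb Z+1}\frac{f(m-j)}{j},\qquad m\in\mathbb Z,
\]
which holds for every $f\in J(H^p)$, $1<p<\infty$, by Proposition~\ref{P:szegoproj}.

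For odd $m$ and odd $j$, the index $m-j$ is even. So the identity expresses the values of $f$ on the odd integers purely in terms of its values on the even integers. Writing $m=2k+1$ and $j=2l+1$, we get
\[
f(2k+1)=\frac{2i}{\pi}\sum_{l\in\mathbb Z}\frac{f(2(k-l))}{2l+1}.
\]
Set $a(k):=f(2k)$. Then the sequence $b(k):=f(2k+1)$ is the convolution on $\mathbb Z$ of $a$ with the kernel $\kappa(l)=\frac{2i}{\pi}\frac{1}{2l+1}$. Up to a constant, this kernel is a shifted discrete Hilbert kernel: $\frac{1}{2l+1}=\frac12\cdot\frac{1}{l+\frac12}$.

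The key step, and the only real obstacle, is showing that convolution with $\kappa$ is bounded on $\ell^p(\mathbb Z)$ for $1<p<\infty$. I would argue in one of two ways. The first is to note that $l\mapsto 1/(l+\tfrac12)$ is a standard discrete Hilbert-type kernel (Riesz's theorem / the discrete Hilbert transform results cited as \cite{HMW}, \cite{BK}). The second is to compute its Fourier multiplier: $\sum_l \frac{e^{-ilt}}{l+1/2}$ is, up to constants, $i\pi e^{it/2}\operatorname{sgn}$-type on $(-\pi,\pi)$, a bounded function with a single jump. The $\ell^p$-boundedness then follows from transference from the continuous Hilbert transform, or from Marcinkiewicz's multiplier theorem. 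Either way we get
\[
\|b\|_{\ell^p}\leqslant A_p\|a\|_{\ell^p}.
\]
This constant $A_p$ is exactly the same kernel operator underlying the $\ell^p$-boundedness of $\mathcal H_{\operatorname{odd}}$ invoked in Proposition~\ref{P:szegoproj}. Indeed, $\mathcal H_{\operatorname{odd}}$ restricted to even-supported inputs and read off on odd outputs is this convolution. So it suffices to apply that boundedness to the sequence $f\chi_{2\mathbb Z}\in\ell^p$ and restrict the output to the odd integers, which avoids any new computation.

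With this in hand, we conclude:
\[
\|f\|_{H^p}^p=\|a\|_{\ell^p}^p+\|b\|_{\ell^p}^p\leqslant(1+A_p^p)\|f|_{\Lambda_0^{\operatorname{even}}}\|_{\ell^p}^p.
\]
This gives the claim with $C_p=(1+A_p^p)^{1/p}$. The odd case is symmetric: for even $m$ and odd $j$, the index $m-j$ is odd, so the same identity expresses the even values as the same convolution applied to the odd values, and the same bound follows.
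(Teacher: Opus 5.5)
Your proof is correct and follows the paper's argument. Both use \eqref{eq:hilbert_lp} to write the odd boundary values as a Hilbert-type convolution of the even ones (and vice versa), invoke the $\ell^p$-boundedness of that operator, and conclude from $\|f\|_{H^p}^p=\|f_e\|_{\ell^p}^p+\|f_o\|_{\ell^p}^p$. Your observation that the required bound follows by applying $\mathcal H_{\operatorname{odd}}$ to $f\chi_{2\mathbb Z}$ and restricting the output to the odd integers is a clean way to make precise the boundedness step, which the paper only asserts.
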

    
    \begin{proof}
     Given $f$ in $H^p$, we set $f_e(m)=f(2m)$ and $f_o(m)=f(2m+1)$. Then, writing 
    \eqref{eq:hilbert_lp} more explicitly,\begin{align*}
    f_o(m)&=  f(2m+1)=\frac{2i}{\pi}\sum_{j=-\infty}^{+\infty}\frac{f(2m+1-(2j+1))}{2j+1}  \\
     &=\frac{2i}{\pi}\sum_{j=-\infty}^{+\infty} \frac{f(2m-2j)}{2j+1}=\frac{2i}{\pi}\sum_{j=-\infty}^{+\infty}\frac{f_e(m-j)}{2j+1}.
     \end{align*}
     Thus, $f_o$ is obtained by an operator applied to $f_e$. This operator is a Hilbert transform-type operator, thus, in particular, it is $\ell^p$-bounded for $1<p<\infty$. Hence, $\|f_o\|_{\ell^p}\leqslant C\| f_e\|_{\ell^p}$. Therefore, 
     \begin{align*}
    \|f_e\|_{\ell^p}^p&\leqslant \|f\|^p_{\ell^p}=\|f_e\|_{\ell^p}^p+\|f_o\|_{\ell^p}^p\leqslant (1+C^p)\|f_e\|_{\ell^p}^p.
    \end{align*}
     Similarly, we also obtain
    \[
    \|f_o\|_{\ell^p}^p\leqslant \|f\|^p_{\ell^p}\leqslant (1+C^p)\|f_o\|_{\ell^p}^p.
    \]
    \end{proof}

    We conclude the discussion on sampling in $H^p$ by showing that, at least for $H^2$, the sets  $\Lambda^{\operatorname{even}}_n$ and $\Lambda^{\operatorname{odd}}_n$ are not sampling sets for $n\geqslant1$, that is, the sampling result in Proposition \ref{p:sampling} is a genuine boundary phenomenon. Although the splitting of the $\ell^2$-norms occurs at every height $n\geqslant 0$, only the boundary sets control the $H^2$-norm.
    \begin{prop}\label{p:not_sampling_positive_height}
    The sets $\Lambda^{\operatorname{even}}_n$ and $\Lambda^{\operatorname{odd}}_n$ are not sampling sets for $H^2$ if $n\geqslant 1$. 
    \end{prop}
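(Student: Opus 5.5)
We argue by contradiction with the Paley--Wiener model of Theorem~\ref{t:H^2_PW}. Fix $n\geqslant1$ and suppose $\Lambda_n^{\operatorname{even}}$ were a sampling set, i.e.\ $\|f\|_{H^2}^2\leqslant A\sum_m|f(2m+in)|^2$ for all $f\in H^2$. By Proposition~\ref{p:sampling}, $\sum_m|f(2m+in)|^2=\frac12\sum_m|f(m+in)|^2$, and the same identity holds for $\Lambda_n^{\operatorname{odd}}$. So both statements reduce to showing that there is no constant $A$ with
\[
\|f\|_{H^2}^2\leqslant A\,\|f(\cdot+in)\|_{\ell^2}^2,\qquad f\in H^2 .
\]
This already explains why the parity structure plays no role at positive height: the obstruction is purely the decay of the horizontal norms.

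Next we rewrite both sides on the Fourier side. For $f\in H^2$ put $g=\mathcal F(Jf)$, which is supported in $[0,\pi]$. By the representation \eqref{eq:f_representation_spectral} and Plancherel,
\[
\|f\|_{H^2}^2=\int_0^\pi|g(t)|^2\,\mathrm dt,\qquad
\|f(\cdot+in)\|_{\ell^2}^2=\int_0^\pi|g(t)|^2\,\varphi(t)^{2n}\,\mathrm dt,
\]
where $\varphi(t)=\frac{\cos t}{1+\sin t}$. Here $|\varphi(t)|\leqslant1$ on $[0,\pi]$, and $\varphi$ vanishes at $t=\pi/2$.

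To violate the inequality, we take $g_\delta=\chi_{[\pi/2-\delta,\pi/2+\delta]}$ and let $f_\delta\in H^2$ be the corresponding function, which exists by the converse part of Theorem~\ref{t:H^2_PW}. Then $\|f_\delta\|_{H^2}^2=2\delta$. On the other hand, since $\varphi$ is smooth with $\varphi(\pi/2)=0$, we have $|\varphi(t)|\leqslant c|t-\pi/2|$ near $\pi/2$. Hence
\[
\|f_\delta(\cdot+in)\|_{\ell^2}^2\leqslant c^{2n}\int_{-\delta}^{\delta}|s|^{2n}\,\mathrm ds=O(\delta^{2n+1}).
\]
The ratio of the two quantities is therefore $O(\delta^{2n})\to0$ as $\delta\to0$, which contradicts the assumed sampling inequality. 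The same family handles both the even and the odd sets at once, because of the exact splitting in Proposition~\ref{p:sampling}.

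The only point needing care is checking that $\varphi$ has an honest zero at $\pi/2$, so that the ratio really tends to zero rather than to a positive constant. This is immediate from $\cos(\pi/2)=0$ and $1+\sin(\pi/2)=2\neq0$. Everything else is a direct application of Theorem~\ref{t:H^2_PW} and Proposition~\ref{p:sampling}, so we expect no real obstacle.
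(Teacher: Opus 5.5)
Your proposal is correct and follows essentially the same route as the paper: you concentrate the spectral function near $t=\pi/2$, where $\varphi$ vanishes, use Proposition~\ref{p:sampling} to pass from $\Lambda_n^{\operatorname{even}}$ or $\Lambda_n^{\operatorname{odd}}$ to the full horizontal norm, and compute both sides by Plancherel. The only cosmetic difference is that you extract an explicit rate $O(\delta^{2n})$ from the linear vanishing of $\varphi$ at $\pi/2$, whereas the paper normalizes $g_\varepsilon$ and simply passes to the limit using the continuity of $\varphi$.
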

    \begin{proof}
     Again, we only focus on the sets $\Lambda^{\operatorname{even}}_n$. We prove that there does not exist a constant $c>0$ such that 
     \begin{equation}\label{eq:lower_sampling}
     c\|f\|^2_{H^2}\leqslant \sum_{m=-\infty}^{+\infty}|f(2m+in)|^2
     \end{equation}
     for every $f$ in $H^2$. Indeed, for every $0<\varepsilon<\pi/2$, let us consider the function $g_{\varepsilon}(t)=(\sqrt{ 2\varepsilon})^{-1} \chi_{\varepsilon}(t)$ where $\chi_\varepsilon$ denotes the characteristic function of the interval $[\pi/2-\varepsilon,\pi/2+\varepsilon]$. Then $\|g_\varepsilon\|_{L^2}=1$ and the associated $H^2$-function
     \[
     h_\varepsilon(m+in)=\frac{1}{\sqrt{2\pi}} \int_0^\pi g_\varepsilon(t) e^{imt}\Big(\frac{\cos t}{1+\sin t}\Big)^n\operatorname{d}\!t, \qquad m+in\in\mathbb{Z}^2_+,
     \]
     satisfies $\|h_\varepsilon\|_{H^2}=1$ as well. However, for $n\geqslant1$, by Proposition \ref{p:sampling} and the unitarity of $\mathcal{F}$,
     \begin{align*}
         \sum_{m=-\infty}^{+\infty}|h_\varepsilon(2m+in)|^2&=\frac12\sum_{m=-\infty}^{+\infty}|h_\varepsilon(m+in)|^2\\
         &=
         \frac{1}{4\varepsilon}\int_{\frac\pi2-\varepsilon}^{\frac\pi2+\varepsilon}\Big|\frac{\cos t}{1+\sin t}\Big|^{2n}\operatorname{d}\! t\to \frac{1}{2}\Big|\frac{\cos \frac\pi2}{1+\sin \frac\pi2}\Big|^{2n}=0,
     \end{align*}
     as $\varepsilon\to 0$.
     Thus, there cannot exist a constant $c>0$ such that \eqref{eq:lower_sampling} holds for every $h$ in $H^2$.
     \end{proof}

    \section{Weighted Bergman spaces}\label{s:Bergman}
    The Hardy spaces studied in the previous sections naturally lead to other Hilbert spaces of discrete holomorphic functions on the upper half-lattice. In this final section we briefly discuss Bergman-type spaces. Our goal is not to develop a complete theory, but rather to record some basic facts and to indicate two natural ways of introducing weights. The first one follows the classical idea of using powers of the diagonal Bergman kernel as weights. The second one is adapted to the Paley--Wiener representation and uses powers of the spectral weight appearing in the Bergman norm. We show that, for a suitable range of parameters, these two constructions give the same spaces, up to equivalence of norms.
    
    The unweighted Bergman space is naturally defined by
    \[
    A^2(\mathbb{Z}^2_+)
    =
    \Big\{
    f\in \operatorname{Hol}(\mathbb Z^2_+):
    \|f\|_{A^2}^2
    :=
    \sum_{n=0}^{+\infty}
    \sum_{m=-\infty}^{+\infty}
    |f(m+in)|^2
    <\infty
    \Big\}.
    \]
    Since
    \[
    \sup_{n\geqslant 0}
    \sum_{m=-\infty}^{+\infty}|f(m+in)|^2
    \leqslant
    \sum_{n=0}^{+\infty}
    \sum_{m=-\infty}^{+\infty}
    |f(m+in)|^2,
    \]
    we have the bounded inclusion
    $A^2\subseteq H^2$.
    Therefore, the Paley--Wiener theorem for $H^2$ can be directly used to obtain a
    spectral characterization of $A^2$. 
    Using the same notation of Theorem~\ref{t:H^2_PW}, we have the following.
    \begin{thm}\label{t:B^2_PW}
    The Fourier transform $\mathcal F$ maps $J(A^2)$ onto
    \[
    X=\left\{g\in L^2(\mathbb T): \supp g\subseteq [0,\pi], \int_0^\pi|g(t)|^2\frac{1+\sin t}{2\sin t}\operatorname{d}\! t<\infty\right\}.
    \]
    Moreover, for every $f\in A^2$,
    \[
    \|f\|_{A^2}^2=\int_{0}^{\pi}|\mathcal F(Jf)(t)|^2\frac{1+\sin t}{2\sin t}\operatorname{d}\!t.
    \]
    \end{thm}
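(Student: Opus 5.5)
Since $A^2\subseteq H^2$ boundedly, every $f\in A^2$ is in $H^2$. By Theorem~\ref{t:H^2_PW} and its proof, $f$ is then given by \eqref{eq:f_representation_spectral} with $g=\mathcal F(Jf)$ supported in $[0,\pi]$. Indeed, the function defined by \eqref{eq:f_representation_spectral} from $g=\mathcal F(Jf)$ is in $H^2$ and has the same boundary values as $f$. By Theorem~\ref{t:Hp_boundary_banach} (injectivity of $J$), it coincides with $f$ on all of $\mathbb Z^2_+$.

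Hence, for each fixed $n\geqslant0$, the slice $f(\cdot+in)$ has Fourier transform $g(t)\varphi(t)^n$, with $\varphi(t)=\frac{\cos t}{1+\sin t}$. Plancherel, exactly as in the proof of Theorem~\ref{t:H^2_PW}, gives
\[
\sum_{m=-\infty}^{+\infty}|f(m+in)|^2=\int_0^\pi|g(t)|^2\varphi(t)^{2n}\operatorname{d}\!t .
\]
Summing over $n\geqslant0$ and using Tonelli (all terms are nonnegative) gives
\[
\sum_{n=0}^{+\infty}\sum_{m=-\infty}^{+\infty}|f(m+in)|^2=\int_0^\pi|g(t)|^2\sum_{n=0}^{+\infty}\varphi(t)^{2n}\operatorname{d}\!t=\int_0^\pi\frac{|g(t)|^2}{1-\varphi(t)^2}\operatorname{d}\!t ,
\]
since $\varphi(t)^2<1$ for $t\in(0,\pi)$. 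At the endpoints $t=0,\pi$ we have $\varphi^2=1$, but these points have measure zero, and the series diverges there consistently with the weight blowing up.

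The key algebraic step is the identity
\[
1-\varphi(t)^2=\frac{(1+\sin t)^2-\cos^2 t}{(1+\sin t)^2}=\frac{2\sin t(1+\sin t)}{(1+\sin t)^2}=\frac{2\sin t}{1+\sin t},
\]
so that $(1-\varphi^2)^{-1}=\frac{1+\sin t}{2\sin t}$. This proves the norm formula, and in particular $\mathcal F(Jf)\in X$ whenever $f\in A^2$.

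For surjectivity, take $g\in X$. The weight satisfies $\frac{1+\sin t}{2\sin t}\geqslant 1$ on $(0,\pi)$, so $g\in L^2$ with support in $[0,\pi]$. Theorem~\ref{t:H^2_PW} then produces $f\in H^2$, given by \eqref{eq:f_representation_spectral}, with $\mathcal F(Jf)=g$. The same Tonelli computation shows $\|f\|_{A^2}^2$ equals the finite weighted integral, so $f\in A^2$.

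I expect no real obstacle. The only points needing care are justifying the representation \eqref{eq:f_representation_spectral} at interior heights for an arbitrary $f\in H^2$, which is handled by the uniqueness argument above, and the interchange of sum and integral, which is justified by positivity.
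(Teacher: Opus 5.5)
Your proof is correct and follows essentially the same route as the paper. You use $A^2\subseteq H^2$ to get the spectral representation \eqref{eq:f_representation_spectral}, apply Plancherel on each horizontal line, sum $\sum_{n\geqslant0}\varphi(t)^{2n}=\frac{1+\sin t}{2\sin t}$ by Tonelli, and reverse the construction for surjectivity. Your one addition is the explicit argument, via injectivity of $J$ from the boundary norm identity, that an arbitrary $f\in A^2$ agrees with \eqref{eq:f_representation_spectral} at every height; the paper uses this step without comment.
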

    
    \begin{proof}
        Since $A^2\subseteq H^2$ we immediately deduce from Theorem \ref{t:H^2_PW} that for any $f$ in $A^2$ we have $\supp\mathcal F(Jf)\subseteq [0,\pi]$. Also, by \eqref{eq:f_representation_spectral}, we get
        \begin{align}\label{eq:Bergman_Plancherel}
        \begin{split}
              \|f\|^2_{A^2}&=\sum_{n=0}^{+\infty}\sum_{m=-\infty}^{+\infty}|f(m+in)|^2=\sum_{n=0}^{+\infty}\int_0^\pi |g(t)|^2 \Big|\frac{\cos t}{1+\sin t }\Big|^{2n}\operatorname{d}\!t\\
            &=\int_0^\pi |g(t)|^2 \sum_{n=0}^{+\infty} \Big|\frac{\cos t}{1+\sin t }\Big|^{2n}\operatorname{d}\!t=\int_0^\pi |g(t)|^2 \frac{1+\sin t}{2\sin t}\operatorname{d}\!t,
        \end{split}
        \end{align}
        where for the last identity we used the fact that $|\frac{\cos t}{1+\sin t}|<1$ for $0<t<\pi$. Thus, the inclusion $\mathcal F(J(A^2))\subseteq X$ follows. Conversely, given $g$ in $X$ let us define
        \[
        f(m+in)=
    \frac{1}{\sqrt{2\pi}}\int_0^\pi
    g(t)e^{imt}
    \left(\frac{\cos t}{1+\sin t}\right)^n
    \operatorname{d}\!t.
        \]
    Then, $f$ is well-defined,  $f\in \operatorname{H^2(\mathbb Z^2_+)}$  and $\mathcal F(Jf)=g$. Moreover, \eqref{eq:Bergman_Plancherel} guarantees also that $f$ belongs to $A^2$. Hence, $X\subseteq \mathcal F(J(A^2))$.
    \end{proof}
    
    \begin{cor}\label{c:Bergman_kernel}
    The Bergman space $A^2$ is a reproducing kernel Hilbert space. For $z=m+in$ and $w=u+iv$, its reproducing kernel
    is given by
    \[
    B_w(z)=
    C(z-\overline w)-C(z-\overline w+2i)
    =
    \frac{1}{2\pi}
    \int_0^\pi
    e^{i(m-u)t}
    \left(\frac{\cos t}{1+\sin t}\right)^{n+v}
    \frac{2\sin t}{1+\sin t}
    \operatorname{d}\!t,
    \]
    where $C$ is the Cauchy kernel.
    \end{cor}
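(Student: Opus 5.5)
The plan is to transport everything to the spectral side via Theorem~\ref{t:B^2_PW}. That theorem gives a linear bijection $f\mapsto g=\mathcal F(Jf)$ from $A^2$ onto $X$, with
\[
\|f\|_{A^2}^2=\int_0^\pi|g(t)|^2\,\omega(t)\operatorname{d}\!t,\qquad \omega(t)=\frac{1+\sin t}{2\sin t}.
\]
So $A^2$ is unitarily equivalent to the weighted space $L^2([0,\pi],\omega\operatorname{d}\!t)$, which is complete. Every $g$ in this weighted space lies in $L^2$, since $\omega\geqslant 1/2$. Hence $A^2$ is a Hilbert space, with inner product
\[
\langle f,h\rangle_{A^2}=\sum_{n\geqslant0}\sum_m f(m+in)\overline{h(m+in)}=\int_0^\pi \mathcal F(Jf)\,\overline{\mathcal F(Jh)}\,\omega\operatorname{d}\!t,
\]
which follows by polarization. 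Point evaluations are bounded because $|f(z)|\leqslant\|f\|_{H^2}\leqslant\|f\|_{A^2}$, by \eqref{eq:bounded_functional} and the inclusion $A^2\subseteq H^2$. This already gives the RKHS property.

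To identify $B_w$, I use the representation \eqref{eq:f_representation_spectral}:
\[
f(w)=\frac{1}{\sqrt{2\pi}}\int_0^\pi g(t)e^{iut}\varphi(t)^v\operatorname{d}\!t,\qquad \varphi(t)=\frac{\cos t}{1+\sin t}.
\]
Rewrite this as $\int_0^\pi g\,\overline{G_w}\,\omega\operatorname{d}\!t$ with
\[
G_w(t)=\frac{1}{\sqrt{2\pi}}\,e^{-iut}\varphi(t)^v\,\omega(t)^{-1},
\]
using that $\varphi$ is real. Then $B_w$ is the function in $A^2$ whose spectral datum is $G_w$, namely
\[
B_w(m+in)=\frac{1}{\sqrt{2\pi}}\int_0^\pi G_w(t)e^{imt}\varphi(t)^n\operatorname{d}\!t,
\]
and this is exactly the displayed integral, because $\omega^{-1}=\frac{2\sin t}{1+\sin t}$. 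I must check that $G_w\in X$. Since $|\varphi|\leqslant1$ and $\omega^{-1}$ is bounded, $\int|G_w|^2\omega=\frac{1}{2\pi}\int\varphi^{2v}\omega^{-1}<\infty$. Reproduction then follows from the inner product formula, with $\langle f,B_w\rangle_{A^2}=f(w)$.

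Finally, I identify this with $C(z-\overline w)-C(z-\overline w+2i)$. Note that $z-\overline w=(m-u)+i(n+v)$ has nonnegative imaginary part, so Lemma~\ref{l:C_expansion}$(ii)$ applies to both terms. It gives
\[
C(z-\overline w)-C(z-\overline w+2i)=\frac{1}{2\pi}\int_0^\pi e^{i(m-u)t}\varphi^{n+v}\bigl(1-\varphi^2\bigr)\operatorname{d}\!t.
\]
The only real computation is the elementary identity
\[
1-\varphi(t)^2=\frac{(1+\sin t)^2-\cos^2 t}{(1+\sin t)^2}=\frac{2\sin t(1+\sin t)}{(1+\sin t)^2}=\frac{2\sin t}{1+\sin t}.
\]
The main point needing care is the justification of the inner product formula and of the interchange of sum and integral, in particular at the endpoints where $\omega$ blows up. This is already handled by the monotone or dominated convergence argument in \eqref{eq:Bergman_Plancherel}, applied via polarization.
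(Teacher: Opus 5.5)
Your proposal is correct and follows essentially the same route as the paper. Both pass to the spectral side via Theorem~\ref{t:B^2_PW}, take the spectral datum $\frac{1}{\sqrt{2\pi}}e^{-iut}\varphi^v\frac{2\sin t}{1+\sin t}\chi_{[0,\pi]}$ (the paper's $\beta_w$), check reproduction against the weight $\frac{1+\sin t}{2\sin t}$, and identify the kernel with $C(z-\overline w)-C(z-\overline w+2i)$ through $1-\varphi^2=\frac{2\sin t}{1+\sin t}$; the only cosmetic difference is that you bound point evaluations via $\|f\|_{H^2}\leqslant\|f\|_{A^2}$, whereas the paper bounds $|f(z)|$ directly by the full double sum.
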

    
    \begin{proof}
    It is clear by the previous theorem that $A^2$ is a Hilbert space. Moreover, 
    \[
    |f(z)|\leqslant \Big(\sum_{n=0}^{+\infty}\sum_{m=-\infty}^{+\infty}|f(m+in)|^2\Big)^{\frac 12}=\|f\|_{A^2},\quad  f\in A^2,\, z\in\mathbb Z^2_+.
    \]
    Thus, $A^2$ is a reproducing kernel Hilbert space. Let
    \begin{equation*}
         \beta_w(t)=\frac{1}{\sqrt{2\pi}} e^{-iut}\Big(\frac{\cos t}{1+\sin t}\Big)^v\Big(\frac{2\sin t}{1+\sin t}\Big)\chi_{[0,\pi]}(t).
    \end{equation*}
    Then, $\beta_w\in X,$ where $X$ is the space introduced in Theorem \ref{t:B^2_PW}. In particular, $\mathcal{F}^{-1}\beta_w\in A^2$, and for every $f\in A^2$ it satisfies
    \begin{align*}
        \langle f,\mathcal{F}^{-1}\beta_w\rangle_{A^2}=\int_0^\pi\mathcal{F}f(t)\overline{\beta_w(t)}\frac{1+\sin t}{2\sin t}\operatorname{d}\!t=\frac{1}{\sqrt{2\pi}}\int_0^\pi\mathcal{F}f(t)e^{iut}\Big(\frac{\cos t}{1+\sin t}\Big)^v\operatorname{d}\!t=f(w),
        \end{align*}
        where in the last equality we used the reproducing kernel Hilbert space structure of $H^2$, see Theorem \ref{t:H^2_PW}. Then, $B_w:=\mathcal{F}^{-1}\beta_w$ is the kernel of $A^2$ centered in~$w$. We explicitly compute it. 

   For $m+in\in\mathbb{Z}^2_+,$
    \[
    B_w(m+in)=\frac{1}{2\pi}\int_0^{\pi}  e^{i(m-u)t}\Big(\frac{\cos t}{1+\sin t}\Big)^{n+v}\Big(\frac{2\sin t}{1+\sin t}\Big) \operatorname{d}\!t.
    \]
    Since $\frac{2\sin t}{1+\sin t}=1-\big(\frac{\cos t}{1+\sin t}\big)^2$, we obtain 
    \begin{align*}
    B_w(m+in)&=\frac{1}{2\pi}\int_0^{\pi}  e^{i(m-u)t}\Big(\frac{\cos t}{1+\sin t}\Big)^{n+v}\operatorname{d}\!t-\frac{1}{2\pi}\int_0^{\pi}  e^{i(m-u)t}\Big(\frac{\cos t}{1+\sin t}\Big)^{n+v+2} \operatorname{d}\!t\\
    &=C(m-u+i(n+v))-C(m-u+i(n+v+2))\\
    &=C(z-\overline w)-C(z-\overline{w}+2i).
    \end{align*}
    \end{proof}
    We now briefly discuss weighted Bergman spaces. There are at least two natural
    ways to introduce weights in the present setting. The first one uses powers of the diagonal Bergman
    kernel as weights for the counting measure on \(\mathbb Z^2_+\). Thus, for a
    real parameter \(\nu\), we define \(A^2_\nu\) as the space of all discrete
    holomorphic functions on \(\mathbb Z^2_+\) such that
    \[
    \|f\|^2_{A^2_\nu}=\sum_{n=0}^{+\infty}\sum_{m=-\infty}^{+\infty}|f(m+in)|^2 B^\nu_{m+in}(m+in).
    \]
    A second construction is suggested by Theorem \ref{t:B^2_PW}. Given a real parameter $\mu$, we consider the weighted space 
    \[
    X_\mu=\left\{g: \supp g\subseteq [0,\pi], \int_0^\pi|g(t)|^2\left(\frac{1+\sin t}{2\sin t} \right)^\mu\operatorname{d}\! t<\infty\right\}.
    \]
    and then define the weighted Bergman space $\widetilde A^2_\mu=\mathcal F^{-1}(X_\mu)$. Similarly to the classical case, the two constructions are closely related.  More precisely, for $\mu>0$, the
    spectral scale $\widetilde A^2_\mu$ agrees, up to equivalence of norms, with
    the diagonal-kernel scale $A^2_{\frac{1-\mu}{2}}$.
    
    \begin{prop}
    The norms of \(\widetilde A^2_\mu\) and \(A^2_{\frac{1-\mu}{2}}\) are equivalent for $\mu>0$.
    \end{prop}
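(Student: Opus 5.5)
The plan is to reduce both norms to weighted $L^2$ integrals of $g=\mathcal F(Jf)$ over $[0,\pi]$, using the Plancherel computation \eqref{eq:Bergman_Plancherel} from the proof of Theorem~\ref{t:B^2_PW}. Throughout, write $\varphi(t)=\frac{\cos t}{1+\sin t}$ and note the identity $1-\varphi(t)^2=\frac{2\sin t}{1+\sin t}$, already used in Corollary~\ref{c:Bergman_kernel}.

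\textbf{Step 1: the diagonal Bergman kernel.} By Corollary~\ref{c:Bergman_kernel}, with $z=w=m+in$,
\[
B_{m+in}(m+in)=\frac{1}{2\pi}\int_0^\pi \varphi(t)^{2n}\big(1-\varphi(t)^2\big)\operatorname{d}\!t .
\]
This does not depend on $m$. I will show the two-sided estimate $B_{m+in}(m+in)\asymp (n+1)^{-2}$ for $n\geqslant 0$. For $n=0$ the integral is a fixed positive number. For $n\geqslant1$, use the symmetry \eqref{Eq:symmetryphi} to reduce to $[0,\pi/2]$, where $\varphi(t)\geqslant 0$.
\begin{itemize}
\item For the upper bound, use $\varphi(t)\leqslant e^{-t}$ (established in the proof of Theorem~\ref{T:approximationdiscrete}) and $1-\varphi^2\leqslant 2\sin t\leqslant 2t$. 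This gives at most $\int_0^\infty 2t\,e^{-2nt}\operatorname{d}\!t\lesssim n^{-2}$.
\item For the lower bound, note that $\varphi(t)\geqslant e^{-ct}$ on $[0,\pi/4]$ for some constant $c$, since $\log\varphi(t)=-t+O(t^3)$ and $\varphi$ is bounded below on that interval. Also $1-\varphi^2\gtrsim t$ there. Restricting the integral to $[0,1/n]$ then gives a lower bound $\gtrsim n^{-2}$.
\end{itemize}

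\textbf{Step 2: rewriting the $A^2_\nu$ norm.} Set $\nu=\frac{1-\mu}{2}$, so that $B^\nu\asymp (n+1)^{\mu-1}$. Exactly as in \eqref{eq:Bergman_Plancherel}, Tonelli's theorem and Plancherel at each height give
\[
\|f\|^2_{A^2_\nu}\asymp\sum_{n\geqslant0}(n+1)^{\mu-1}\|f(\cdot+in)\|^2_{\ell^2}=\int_0^\pi|g(t)|^2\,S_\mu\big(\varphi(t)^2\big)\operatorname{d}\!t,
\]
where
\[
S_\mu(r)=\sum_{n\geqslant0}(n+1)^{\mu-1}r^n .
\]

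\textbf{Step 3: comparing the weights.} The key elementary estimate is $S_\mu(r)\asymp(1-r)^{-\mu}$ uniformly for $r\in[0,1)$, valid when $\mu>0$.
\begin{itemize}
\item For $r\leqslant 1/2$, both sides are comparable to $1$.
\item For $r$ close to $1$, compare the sum with the integral $\int_0^\infty (x+1)^{\mu-1}e^{-x\log(1/r)}\operatorname{d}\!x\asymp \log(1/r)^{-\mu}\asymp(1-r)^{-\mu}$. The condition $\mu>0$ is exactly what makes this integral grow like a negative power rather than logarithmically or boundedly.
\end{itemize}
Applying this with $r=\varphi(t)^2$ and using $1-\varphi^2=\frac{2\sin t}{1+\sin t}$, the weight in Step 2 becomes comparable to $\big(\frac{1+\sin t}{2\sin t}\big)^\mu$. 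This is precisely the weight defining $X_\mu$.

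\textbf{Step 4: identifying the spaces.} It remains to match the function spaces, not only the norms. An $f\in A^2_\nu$ has each horizontal slice in $\ell^2$ and is holomorphic. One must check that $f$ is of the form $\mathcal F^{-1}g$ with $g\in X_\mu$. When $\mu\leqslant1$ the weight $\big(\frac{1+\sin t}{2\sin t}\big)^\mu$ is bounded below, so $X_\mu\subseteq L^2$ and $f\in H^2$ by Theorem~\ref{t:H^2_PW}. When $\mu>1$, I would instead apply the Paley--Wiener representation to the vertical translate $f(\cdot+i)$: since $\|f(\cdot+in)\|_{\ell^2}$ is nonincreasing by Proposition~\ref{p:H^p_norm_2}, finiteness of $\sum_n (n+1)^{\mu-1}\|f(\cdot+in)\|^2_{\ell^2}$ forces the translate to be in $H^2$. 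One then recovers $g$ on the boundary through the spectral formula.

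I expect the main obstacle to be the uniform two-sided bounds: the kernel estimate in Step 1, particularly the lower bound near $t=0$ and $t=\pi$, and the estimate $S_\mu(r)\asymp(1-r)^{-\mu}$ in Step 3.
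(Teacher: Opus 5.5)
\textbf{Steps 1--3.} These are correct and are, in substance, the paper's argument run in the opposite order. The paper expands $(1-\varphi^2)^{-\mu}$ in its binomial series. Its coefficients $\frac{\Gamma(n+\mu)}{\Gamma(\mu)n!}\asymp(n+1)^{\mu-1}$ are positive, so your estimate $S_\mu(r)\asymp(1-r)^{-\mu}$ follows by termwise comparison, with no sum--integral argument needed. Your direct bound $B_{m+in}(m+in)\asymp(n+1)^{-2}$ replaces the paper's use of Lemma~\ref{l:kernel_asymptotic}. These two-sided bounds, which you flag as the main obstacle, are routine.

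\textbf{The gap is Step 4, which is circular in both cases.} For $\mu\leqslant1$ you deduce $f\in H^2$ from $X_\mu\subseteq L^2$. But that inclusion only says $\widetilde A^2_\mu\subseteq H^2$. An arbitrary $f\in A^2_\nu$ is not yet known to be of the form $\mathcal F^{-1}g$, and that is exactly what is to be shown. Moreover $\frac{1+\sin t}{2\sin t}\geqslant1$, so the lower bound holds for every $\mu>0$, and the case split is not where the difficulty lies.

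For $\mu>1$ you invoke Proposition~\ref{p:H^p_norm_2}, but it is proved only for functions already in $H^p$. That case is trivial anyway: for $\mu\geqslant1$ one has $\nu\leqslant0$, hence $B^\nu_{m+in}(m+in)\gtrsim1$ and $\sup_n\|f(\cdot+in)\|^2_{\ell^2}\lesssim\|f\|^2_{A^2_\nu}$ directly. So your cases are reversed. The only nontrivial case is $0<\mu<1$, where the weights $(n+1)^{\mu-1}$ decay and finiteness of the weighted sum gives no a priori bound on the slices. The paper's own proof starts from $f\in\widetilde A^2_\mu$ and leaves this inclusion implicit.

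\textbf{The fix: use the discrete Cauchy--Riemann equation row by row on the whole circle.} Each slice $f_n=f(\cdot+in)$ lies in $\ell^2$, since its weight is positive and independent of $m$. Taking the Fourier transform in $m$ of $X_if(m+in)=0$ gives $(e^{it}+i)\mathcal Ff_{n+1}=(1+ie^{it})\mathcal Ff_n$. Hence $\mathcal Ff_n=\varphi^n\mathcal Ff_0$ a.e.\ on $[-\pi,\pi]$, with $\varphi(t)=\frac{\cos t}{1+\sin t}$ now taken on the full circle. On $(-\pi,0)\setminus\{-\pi/2\}$ one has $|\varphi(t)|>1$. Plancherel and Tonelli give
\[
\|f\|^2_{A^2_\nu}\asymp\sum_{n\geqslant0}(n+1)^{\mu-1}\|f_n\|^2_{\ell^2}=\int_{-\pi}^{\pi}|\mathcal Ff_0(t)|^2\,S_\mu\big(|\varphi(t)|^2\big)\operatorname{d}\!t ,
\]
and $S_\mu(r)=+\infty$ for $r\geqslant1$ because $\mu>0$. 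Hence $\mathcal Ff_0=0$ a.e.\ on $(-\pi,0)$. Then $f$ has the same rows as the function built from $g=\mathcal Ff_0$ via \eqref{eq:f_representation_spectral}, so $f\in H^2$, and your Steps 2--3 give $g\in X_\mu$. With this replacing Step 4, the argument gives both the equality of the spaces and the equivalence of norms.
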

    \begin{proof}
    By the identities
    \[
    \frac{1+\sin t}{2\sin t}
    =
    \Big(
    1-\Big(\frac{\cos t}{1+\sin t}\Big)^2
    \Big)^{-1},
    \]
    and 
     \begin{equation}\label{eq:binomial}
    (1-s)^{-\mu}
    =
    \sum_{k=0}^{+\infty}
    \frac{\Gamma(k+\mu)}{\Gamma(\mu)k!}s^k,
    \qquad |s|<1,
    \end{equation}
    for a.e. $t\in[0,\pi]$, we obtain
    \begin{equation*}
    \Big(\frac{1+\sin t}{2\sin t}\Big)^\mu
    =
    \sum_{k=0}^{+\infty}
    \frac{\Gamma(k+\mu)}{\Gamma(\mu)k!}
    \Big(\frac{\cos t}{1+\sin t}\Big)^{2k}.
    \end{equation*}
    Let $f\in \widetilde A^2_\mu$
    . Then, $f$ is of the form
    \[
    f(m+in)
    =
    \frac{1}{\sqrt{2\pi}}
    \int_0^\pi
    g(t)e^{imt}
    \Big(\frac{\cos t}{1+\sin t}\Big)^n
    \operatorname{d}\!t,
    \qquad n\geqslant0.
    \]
    for some $g$ in $X_\mu$. Notice also that $X_\mu\subseteq X_0$, so that, by Plancherel, for every fixed $n\geqslant 0$, 
    \[
    \sum_{m=-\infty}^{+\infty}|f(m+in)|^2
    =
    \int_0^\pi
    |g(t)|^2
    \Big|\frac{\cos t}{1+\sin t}\Big|^{2n}
    \operatorname{d}\!t.
    \]
    Therefore,
    \begin{align*}
    \int_0^\pi
    |g(t)|^2
    \Big(\frac{1+\sin t}{2\sin t}\Big)^\mu
    \operatorname{d}\!t &=
    \sum_{n=0}^{+\infty}
    \frac{\Gamma(n+\mu)}{\Gamma(\mu)n!}
    \int_0^\pi
    |g(t)|^2
    \Big|\frac{\cos t}{1+\sin t}\Big|^{2n}
    \operatorname{d}\!t \\
    &=
    \sum_{n=0}^{+\infty}\sum_{m=-\infty}^{+\infty}
    \frac{\Gamma(n+\mu)}{\Gamma(\mu)n!}
    |f(m+in)|^2.
    \end{align*}
    Observe that 
    \[
    \frac{\Gamma(n+\mu)}{\Gamma(\mu)n!}
    =
    \frac{1}{\Gamma(\mu)}\,n^{\mu-1}
    \Big(1+O\Big(\frac{1}{n}\Big)\Big)
    \]
    as $n\to+\infty$.
    On the other hand, by Lemma~\ref{l:kernel_asymptotic} and
    Corollary~\ref{c:Bergman_kernel},
    \[
    B_{m+in}(m+in)
    =
    \frac{1}{2\pi n(n+1)}+O(n^{-3})
    \]
    as $n\to+\infty$.
    Thus, it follows that the norms of $\widetilde A^2_\mu$ and $A^2_{\frac{1-\mu}{2}}$ are comparable.
    \end{proof}
    Notice that for $\mu=0$ one recovers the Hardy space, namely $\widetilde A^2_0=H^2$. For $\mu<0$, $\mu\notin \mathbb Z$, the binomial expansion \eqref{eq:binomial} still holds true, but its coefficients are no longer all positive. Therefore the preceding argument does not identify $\widetilde A^2_\mu$ with a weighted Bergman space of the form $A^2_{\frac{1-\mu}{2}}$.
    This is analogous to the classical picture, where the Hardy space appears as a
    threshold between weighted Bergman spaces and Hardy--Sobolev, or
    Dirichlet-type, spaces. A systematic study of this range of parameters and of
    the structural properties of this scale is beyond the scope of the present
    paper and will be pursued elsewhere.
    \section{Final remarks}\label{s:final}
    The results of this paper suggest that a Hardy--Bergman theory for discrete holomorphic functions cannot be a verbatim translation of the classical one.
    Several classical objects, such as inner-outer factorization and Blaschke
    products, do not seem to have immediate analogues in the discrete setting. One
    basic obstruction is that the pointwise product of two discrete holomorphic
    functions is not, in general, discrete holomorphic. Thus, some of the central
    tools of the continuous theory cannot simply be copied to the lattice.

    This should not necessarily be regarded as a defect of the discrete theory. On
    the contrary, it suggests that the most natural questions may be those which are
    intrinsic to the discrete setting. Rather than trying to reproduce every
    classical construction, one may look for phenomena which are genuinely caused by
    the lattice geometry. The parity effects appearing in the present paper are a
    first example of this. The boundary sampling identity in
    Section~\ref{s:sampling}, for instance, shows that the $H^2$-norm splits
    exactly between the even and the odd boundary sublattices, a feature with no
    direct analogue in the continuous upper half-plane.
    
    At the same time, the approximation result of Section~\ref{s:approximation}
    shows that the discrete Hardy spaces considered here are compatible with the
    classical Hardy space in a suitable scaling limit. Thus the discrete theory is
    not merely an isolated lattice model: it retains a meaningful connection with
    the continuous one, while also exhibiting new features which disappear in the
    limit.
    
    Zero sets, uniqueness, sampling, and interpolation appear to be particularly
    natural problems in this direction. They are sensitive to the geometry of the
    lattice and may reveal phenomena which have no continuous counterpart. Another
    natural direction is to study growth properties and quantitative uniqueness
    principles for discrete holomorphic functions. Since their real and imaginary
    parts are discrete harmonic on the two diagonal sublattices, these questions are
    closely related to the theory of harmonic functions on graphs and lattices. In
    this direction, a striking
    strengthening of the Liouville theorem for discrete harmonic functions on
    $\mathbb Z^2$ was proved in \cite{Malinnikova} and later generalized to periodic planar graphs in \cite{Bou}. It would be
    interesting to understand whether the additional discrete holomorphic structure,
    beyond discrete harmonicity, leads to stronger rigidity, new quantitative
    uniqueness principles, or phenomena depending on the parity structure of the
    square lattice.
    
    It would also be interesting to understand the endpoint duality of these spaces.
    In the classical theory, the endpoint duality of $H^1$ is described in terms of
    $BMOA$, or equivalently in terms of boundary values in $BMO$. It would be interesting to further investigate this duality in the discrete setting. A possible natural approach is to use the boundary identification of $H^1$, together
    with the discrete Szeg\H{o} projection and the Cauchy kernel, to look for a
    lattice-adapted replacement of $BMO$.

    \appendix
    \section{A proof of Lemma \ref{l:kernel_asymptotic}}
    In this appendix we prove the asymptotic expansion \eqref{eq:asymptotic} of the Cauchy kernel, from which the estimates in \eqref{eq:K_norm_p} easily follow. In particular, we prove that
    \begin{equation*}
    C(m+in)=
    \frac{i}{2\pi}\left(
    \frac{1}{m+in}
    -
    (-1)^{m+n}\frac{1}{m-in}
    \right)
    +O\!\left((m^2+n^2)^{-\frac32}\right),
    \end{equation*} for $m+in$ in $\mathbb{Z}^2\setminus\{0\}.$  We deal with the different cases separately.
    \subsection{The boundary case $n=0$}
    In this situation the kernel $C$ is exactly computed, indeed
    \[ C(m)=\frac 1{2\pi}\int_0^\pi e^{imt}\operatorname{d}\!t=\begin{dcases*}    \frac 12,&\text{if }$m=0$,\\
    0,&\text{if }$m\in 2\mathbb Z\setminus\{0\}$,\\
    \frac i{\pi m},&\text{if }$m\in 2\mathbb Z+1$,
    \end{dcases*}
    \]
    and the conclusion follows.
    \subsection{The case $m\neq0,n\neq 0$}
    Let us assume that $m\neq0,n\neq 0$. It is sufficient to prove the statement for $n\geqslant  1$, indeed if $n\leqslant -1$, then, by~\eqref{eq:kernel_negative}, we have
    \begin{align*}
        C(m+in)&=(-1)^{m+n+1}C(m-in)\\
        &=(-1)^{m+n+1}\frac i{2\pi}\Big( \frac 1{m-in}-(-1)^{m+n}\frac 1{m+in}\Big)+O((m^2+n^2)^{-\frac{3}{2}})\\
        &=\frac i{2\pi}\Big(\frac 1{m+in}-(-1)^{m+n}\frac 1{m-in}\Big)+O((m^2+n^2)^{-\frac{3}{2}}).
    \end{align*}
    
    Let $\delta>0$ be fixed. Let $\chi$ in $ C^\infty((0,\pi))$ be a cutoff function such that $\chi(u)=\chi(\pi-u)$, $\chi|_{[0,\delta]}\equiv 1$ and $\chi|_{[2\delta,\pi-2\delta]}\equiv 0$. We set the following notation:
    \[ 
        I_\delta(m,n)=\int_{0}^{2\delta}\chi(t) e^{imt} \Big(\frac{\cos t} {1+\sin t} \Big)^n\operatorname{d}\!t,\qquad
        T_\delta(m,n)=\int_{\delta}^{\pi-\delta} (1-\chi(t))e^{imt} \Big(\frac{\cos t} {1+\sin t} \Big)^n\operatorname{d}\!t.
    \]
    By applying a simple change of variable, we can split the integral of $C$ as follows
        \begin{align*}
            2\pi C(m+in)&=\int_{0}^{2\delta} \chi(t)e^{imt} \Big(\frac{\cos t} {1+\sin t} \Big)^n\operatorname{d}\!t+\int_{\pi-2\delta}^{\pi} \chi(t) e^{imt} \Big(\frac{\cos t} {1+\sin t} \Big)^n\operatorname{d}\!t+T_\delta(m,n)\\
        &=I_\delta(m,n)
        +(-1)^{m+n} \overline{I_\delta(m,n)} +T_\delta(m,n)
        \end{align*}
        
        We first estimate $T_\delta$, which will be a negligible term. 
        \begin{prop}
          For every $K\geqslant1$  We have $T_\delta(m,n)= O ((m^2+n^2)^{-K})$.
        \end{prop}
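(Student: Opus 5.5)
We must show $T_\delta(m,n)=O((m^2+n^2)^{-K})$, where
\[
T_\delta(m,n)=\int_\delta^{\pi-\delta}(1-\chi(t))\,e^{imt}\varphi(t)^n\,\mathrm dt,\qquad \varphi(t)=\frac{\cos t}{1+\sin t},
\]
with $n\geqslant 1$ and $m\neq 0$. The idea is that on $[\delta,\pi-\delta]$ the factor $\varphi^n$ decays geometrically in $n$. Repeated integration by parts in $t$ then gives decay in $m$.

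\textbf{Two regimes.} Since $|\varphi|$ is decreasing on $[0,\pi/2]$ and symmetric under $t\mapsto\pi-t$ by \eqref{Eq:symmetryphi}, we have
\[
\sup_{[\delta,\pi-\delta]}|\varphi|=\varphi(\delta)=:r<1 .
\]
\emph{Case $|m|\leqslant n$.} The trivial bound gives $|T_\delta(m,n)|\leqslant \pi r^n$. Since $m^2+n^2\leqslant 2n^2$, and $r^n(2n^2)^K$ is bounded in $n$, we get $r^n\lesssim_K (m^2+n^2)^{-K}$.

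\emph{Case $|m|>n$.} Integrate by parts $L$ times using $e^{imt}=(im)^{-1}\partial_t e^{imt}$. The weight $w(t)=(1-\chi(t))\varphi(t)^n$ is smooth on $[\delta,\pi-\delta]$. Moreover $1-\chi$ vanishes identically near both endpoints $\delta$ and $\pi-\delta$: indeed $\chi\equiv1$ on $[0,\delta]$, and by the symmetry $\chi(u)=\chi(\pi-u)$ also on $[\pi-\delta,\pi]$. Hence all boundary terms vanish. (To have vanishing near the endpoints rather than only at them, one can shrink the region $\chi\equiv 1$ slightly; alternatively, $T_\delta$ is really an integral over $[0,\pi]$ of a function supported away from $0$ and $\pi$.) This gives
\[
|T_\delta(m,n)|\leqslant |m|^{-L}\int_\delta^{\pi-\delta}\big|\partial_t^L w\big|\,\mathrm dt .
\]

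\textbf{Bounding the derivatives.} By Leibniz and Faà di Bruno, $\partial_t^L(\varphi^n)$ is a sum of terms
\[
n(n-1)\cdots(n-j+1)\,\varphi^{n-j}\prod_i \varphi^{(k_i)},\qquad j\leqslant L .
\]
On $[\delta,\pi-\delta]$ all derivatives of $\varphi$ are bounded, since $1+\sin t\geqslant 1$. This yields
\[
\big|\partial_t^L w\big|\leqslant C_{L,\delta}\, n^L r^{n-L}\leqslant C'_{L,\delta},
\]
uniformly in $n$, because $n^Lr^n$ is bounded. Therefore $|T_\delta(m,n)|\leqslant C|m|^{-L}$. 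With $|m|>n$ we have $m^2+n^2\leqslant 2m^2$, so choosing $L=2K$ gives $O((m^2+n^2)^{-K})$.

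The only delicate point is controlling the derivatives of $\varphi^n$ uniformly in $n$. This works because the polynomial factors $n^j$ produced by differentiation are absorbed by the geometric decay $r^{n-j}$.
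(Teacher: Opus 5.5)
Your proof is correct, and its overall structure is the same as the paper's. Both split into two regimes. Where $n$ dominates $|m|$ (the paper uses $n\geqslant c|m|$, you use $|m|\leqslant n$), the trivial bound $|T_\delta|\lesssim r^n$ with $r=\varphi(\delta)<1$ already beats every power. Where $|m|$ dominates, both integrate by parts $L$ times in $t$, with vanishing boundary terms, and need a bound on $\partial_t^L\big((1-\chi)\varphi^n\big)$ that is uniform in $n$.

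The genuine difference is how that uniform derivative bound is obtained.

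\emph{The paper's route.} It applies Cauchy's integral formula to $\varphi_n=\varphi^n$ on a circle in the complex plane enclosing $[\delta,\pi/2]$, on which $|\varphi|\leqslant 1$. Then $|\varphi_n|\leqslant1$ on the contour, and all derivatives are bounded independently of $n$ without using the strict inequality $r<1$. The cost is that one needs $|\varphi|\leqslant1$ off the real axis on a circle of radius roughly $\pi/4$. The paper's justification by continuity does not really cover this. The claim is nevertheless true, since $|\varphi(z)|=|e^{iz}-i|/|e^{iz}+i|\leqslant 1$ whenever $0\leqslant\operatorname{Re}z\leqslant\pi$.

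\emph{Your route.} Your Fa\`a di Bruno argument stays on the real line. It pays for the factors $n(n-1)\cdots(n-j+1)\leqslant n^L$ with the geometric decay $r^{n-j}$. The falling factorial vanishes for $j>n$, so the potentially singular negative powers of $\varphi$ (which vanishes at $\pi/2$) never appear. This is more elementary and fully self-contained.

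Two cosmetic points. First, your parenthetical about shrinking the region where $\chi\equiv1$ is unnecessary. Smoothness of $\chi$ together with $\chi\equiv1$ on $[0,\delta]$ (and on $[\pi-\delta,\pi]$ by symmetry) already forces $(1-\chi)^{(j)}=0$ at both endpoints for every $j$, which is all the integration by parts needs. Second, if $K$ is not an integer, take $L=\lceil 2K\rceil$.
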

        \begin{proof}
            We divide the proof in two cases. Assume first $n\geqslant c|m|$, for some $c>0$. In this region, we have that $n\approx (m^2+n^2)^{\frac 12}$, indeed $n\leqslant (m^2+n^2)^{\frac 12}\leqslant n\Big(\frac 1{c^2}+1\Big)^{\frac 12}$. 
     Then, we clearly have that
    \begin{align}\label{eq:tail}
      |T_\delta(m+in)|&\leqslant\int_{\delta}^{\pi-\delta} e^{-n\log\big|\frac{1+\sin t}{\cos t}\big|}\operatorname{d}\!t\leqslant e^{-nC_\delta}\leqslant e^{-\alpha(m^2+n^2)^{\frac12}C_\delta}.  
    \end{align}
    Thus, $|T_\delta(m+in)|= O (e^{-\alpha\sqrt{m^2+n^2}})$. In particular, we also have that  $T_\delta(m,n)= O ((m^2+n^2)^{-\frac K2})$ for every $K\geqslant 1$.
    
    Consider now the case $n\leqslant  c|m|$, for some $c>0$. Clearly, here we have that $m\approx (m^2+n^2)^{\frac 12}$. We can integrate by parts $K$ times the integral in $T_\delta$ and the boundary contribution vanishes every time because of $\eta=    1-\chi$, obtaining
    \[T_\delta(m,n)=\frac{(-1)^K}{(im)^K}\int_0^{\pi}e^{imt}(\eta\varphi_n)^{(K)}(t)\operatorname{d}\!t, 
    \]
    where
    \[
    \varphi_n(t)=\Big(\frac{\cos t} {1+\sin t} \Big)^n.
    \]
    The function
    $\varphi_n(z)$
    is holomorphic in the strip $\{z=x+iy\in\mathbb C: -\pi/2<x<3\pi/2 \}$. Let $\varepsilon>0$ small enough such that  $C_\varepsilon(t)$ is a circumference that strictly contains the interval $[\delta-\varepsilon,\pi/2+\varepsilon]$ and it is strictly contained in the strip $\{z=x+iy\in\mathbb C: -\pi/2<x<3\pi/2 \}$ and, moreover, that $\sup_{\zeta\in C_\varepsilon}|\varphi(\zeta)|\leqslant 1$
    (possible since $\sup_{[\delta,\pi/2]}\varphi=\varphi(\delta)<1$ and $\varphi$ is continuous),
    so that $\sup_{\zeta\in C_\varepsilon}|\varphi_n(\zeta)|=\big(\sup_{\zeta\in C_\varepsilon}|\varphi(\zeta)|\big)^n\leqslant 1$ uniformly in $n$. Here, $\varphi=\varphi_1$.
    By Cauchy's formula,
    \[
    \frac{d^j}{dt^j}(\varphi_n(t))=\frac{j!}{2\pi}\int_{C_\varepsilon}\frac{\varphi_n(\zeta)}{(\zeta-t)^{j+1}}\operatorname{d}\!\zeta.
    \]
    Hence,
    \[
    \Big|\frac{d^j}{dt^j}(\varphi_n(t))\Big|\leqslant j! \tilde c_{\varepsilon,j}\sup_{\zeta\in  C_\varepsilon}|\varphi_n(\zeta)|\leqslant j! c_{\varepsilon, j},\quad t\in[\delta,\pi/2].
    \]
    Using Leibniz' formula,
    \[(\eta \varphi_n)^{(K)}=\sum_{j=0}^{K}\binom{K}{j}\eta^{(j)}(\varphi_n)^{(K-j)},\]
    and therefore
    $|(\eta \varphi_n)^{(K)}(t)|\leqslant \tilde C_{\delta,\varepsilon,K}$.
    Consequently,
    \[|T_\delta(m,n)|\leqslant C_{\delta,\varepsilon,K}\frac 1{|m|^K}= O ((m^2+n^2)^{-\frac K2}),\]
    by the fact that $|m|\approx (m^2+n^2)^{\frac 12}$.
        \end{proof}
    In order to estimate the contribution of $I_\delta$, we need to recall a technical result.
    \begin{lem}\label{lem: Gamma} Let $\delta>0$. The following hold true.
    \begin{enumerate}
        \item Let \(F\in C_c^\infty([0,\infty))\). Assume that $F(u)=O(u^M)$ for $u\to 0$ and some $M>0$. Then 
        \[
    \int_0^{+\infty} e^{-zu}F(u)\operatorname{d}\!u
    =
    O(|z|^{-M-1}),
    \]
    uniformly for $\operatorname{Re} z\geqslant 
    1$.
    \item 
        Let $\theta\in C^\infty$ such that $\theta|_{[0,\delta]}\equiv 1$ and $\theta|_{[c\delta,+\infty)}\equiv 0$, for some $c>1$. For every $k\in\mathbb N$ and $K\geqslant1$, we have
        \[\int_0^{\infty} e^{-zu}u^k\theta(u)\operatorname{d}\!u=\frac {k!}{z^{k+1}}+ O (|z|^{-K}),\]
        uniformly for $\operatorname{Re} z\geqslant1$.
    \end{enumerate}
    \end{lem}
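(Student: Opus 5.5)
For part (1), I would bound the integrand pointwise and integrate against the Gamma function. The hypotheses give $|F(u)|\leqslant A u^M$ near $0$. Since $F$ is bounded with compact support in $[0,R]$, this upgrades to a global bound $|F(u)|\leqslant A' u^M$ for all $u\geqslant0$, with $A'=\max(A,\sup|F|/\eta^M)$ for a small $\eta$. Writing $x=\operatorname{Re}z\geqslant1$, this gives
\[
\Big|\int_0^\infty e^{-zu}F(u)\operatorname{d}\!u\Big|\leqslant A'\int_0^\infty e^{-xu}u^M\operatorname{d}\!u=A'\Gamma(M+1)x^{-M-1}.
\]
This is the right order only when $x\approx|z|$, so the real issue is the regime where $|\operatorname{Im}z|$ is much larger than $\operatorname{Re}z$, e.g. $\operatorname{Re}z=1$ and $|z|\to\infty$.

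To handle that regime, I would integrate by parts in $u$ using $e^{-zu}=-z^{-1}\tfrac{d}{du}e^{-zu}$. Each integration by parts gains a factor $|z|^{-1}$. The boundary terms at $u=0$ involve $F^{(j)}(0)$, and these vanish for $j<M$ if the vanishing of order $M$ is understood in the $C^\infty$ sense, i.e. $F^{(j)}(u)=O(u^{M-j})$, which holds for $M$ integer by Taylor's theorem. After $\lceil M\rceil$ steps I would be left with an integral of $e^{-zu}F^{(\lceil M\rceil)}$, which is bounded by $\int_0^R e^{-xu}\,du\leqslant R$. Pushing one more step, the $u=0$ boundary term is $F^{(M)}(0)/z^{M+1}$ and the remaining integral is $O(|z|^{-1})$ times the previous bounds.

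The cleanest route, and the one I will commit to, is to handle $M$ integer this way. Taylor's formula gives $F^{(j)}(0)=0$ for $j<M$. Integrating by parts $M+1$ times then yields
\[
z^{-M-1}\Big(F^{(M)}(0)+\int_0^\infty e^{-zu}F^{(M+1)}(u)\,du\Big),
\]
and the bracket is uniformly bounded for $\operatorname{Re}z\geqslant1$. This is exactly the case needed later, where $F$ collects Taylor remainders. For non-integer $M$, I would interpolate between the two bounds, or reduce to $\lfloor M\rfloor$ after noting that $F^{(\lfloor M\rfloor)}(u)=O(u^{M-\lfloor M\rfloor})$. I expect this reconciliation of the pointwise bound with $|\operatorname{Im}z|\gg\operatorname{Re}z$ to be the main obstacle, and I would state the lemma for integer $M$ if needed.

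For part (2), I would write
\[
\int_0^\infty e^{-zu}u^k\theta(u)\,du=\int_0^\infty e^{-zu}u^k\,du-\int_0^\infty e^{-zu}u^k(1-\theta(u))\,du.
\]
The first integral equals $k!/z^{k+1}$ for $\operatorname{Re}z>0$, by analytic continuation of the Gamma integral from real $z$. In the second integral, $G(u)=u^k(1-\theta(u))$ vanishes on $[0,\delta]$ and all its derivatives have polynomial growth. Integrating by parts $K$ times gives no boundary terms at $u=0$, since $G$ vanishes near $0$, and none at infinity because of exponential decay. The result is
\[
z^{-K}\int_0^\infty e^{-zu}G^{(K)}(u)\,du.
\]
For $u\geqslant\delta$ one has $|e^{-zu}|\leqslant e^{-u}$, and $|G^{(K)}(u)|\leqslant C(1+u)^k$, so this integral is bounded uniformly for $\operatorname{Re}z\geqslant1$. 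Hence the second term is $O(|z|^{-K})$.
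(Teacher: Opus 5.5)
Your proposal is correct and follows the paper's proof essentially verbatim: in part (1) you integrate by parts $M+1$ times to reach $z^{-M-1}\big(F^{(M)}(0)+\int_0^\infty e^{-zu}F^{(M+1)}(u)\,\mathrm{d}u\big)$, and in part (2) you split off the Gamma integral $k!/z^{k+1}$ and integrate the piece with $u^k(1-\theta(u))$ by parts $K$ times (your minus sign in that splitting is the right one). Your worry about non-integer $M$ is unnecessary: smoothness of $F$ together with $F(u)=O(u^M)$ forces $F^{(j)}(0)=0$ for every integer $j<M$, so $F$ vanishes to order $\lceil M\rceil$, and the integer case then gives an even better bound because $|z|\geqslant1$.
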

    \begin{proof}
        We start with the first item. 
        Integrating by parts \(M\) times,
    all boundary terms up to order \(M-1\) vanish at \(u=0\), while the
    contribution at \(+\infty\) is zero because \(F\) is compactly
    supported. Integrating by parts one time more, the resulting integral is bounded by
    \[\int_0^\infty e^{-zu}F(u)\operatorname{d}\!u=\frac{F^{(M)}(0)}{z^{M+1}}+\frac1{z^{M+1}}\int_0^\infty e^{-zu}F^{(M+1)}(u)\operatorname{d}\!u=O(|z|^{-M-1}),\]
    which proves the claim.
        
        We prove now the second part of the Lemma. We split the integration as
        \[\int_0^{+\infty} e^{-zu}u^k\theta(u)\operatorname{d}\!u= \int_0^{+\infty} e^{-zu}u^k\operatorname{d}\!u+\int_0^{+\infty} e^{-zu}u^k(1-\theta(u))\operatorname{d}\!u.\]
        The first integral can be reduced to a Gamma function by using a change of variable. Indeed,
        \[ 
        \int_0^{+\infty} e^{-zu}u^k\operatorname{d}\!u=\frac 1{z^{k+1}}\int_0^{+\infty} e^{-t}t^k\operatorname{d}\!t=\frac {k!}{z^{k+1}}.
        \]
        For the second term, we have that the function $g(u)=u^k(1-\theta(u))$ and all its derivatives vanish at $0$. We can then integrate by parts $K$ times getting
        \[
        \int_0^{+\infty} e^{-zu}u^k(1-\theta(u))\operatorname{d}\!u=\frac 1{z^K}\int_0^{+\infty} e^{-zu}g^{(K)}(u)\operatorname{d}\!u=O(|z|^{-K}).\]
    \end{proof}
    We are ready to estimate the integral $I_\delta$.
    \begin{prop}
        For $\delta>0$, we have that
    \[ I_\delta(m,n)=\frac 1{n-im}+ O \left(\frac 1{|m^2+n^2|^{\frac 32}}\right).\]
    \end{prop}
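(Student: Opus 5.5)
The plan is to reduce $I_\delta$ to a Laplace-type integral through the substitution suggested by the comparison $\varphi(t)\le e^{-t}$. Set $\varphi(t)=\frac{\cos t}{1+\sin t}$ and define $u=-\log\varphi(t)$ on $[0,2\delta]$, where $\delta$ is small.

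\emph{Change of variables.} Since $\varphi(t)=1-t+\tfrac{t^2}{2}-\dots$ and $e^{-t}-\varphi(t)=t^3+o(t^3)$, we get $u(t)=t+O(t^3)$. The map $t\mapsto u$ is a smooth diffeomorphism near $0$, and its inverse satisfies $t(u)=u+O(u^3)$. In the new variable $\varphi(t)^n=e^{-nu}$ and $e^{imt}=e^{imu}e^{im(t(u)-u)}$. This gives
\[
I_\delta(m,n)=\int_0^\infty e^{-(n-im)u}\,\Theta(u)\,e^{im r(u)}\operatorname{d}\!u,
\]
where $r(u)=t(u)-u=O(u^3)$ and $\Theta(u)=\chi(t(u))\,t'(u)$. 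Here $\Theta$ is smooth and compactly supported, $\Theta=1+O(u^2)$, and $\Theta\equiv t'(u)$ near $0$. We set $z=n-im$, so that $\operatorname{Re}z=n\ge1$ and $|z|=(m^2+n^2)^{1/2}$.

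\emph{Main term.} We decompose $\Theta e^{imr}=\theta(u)+\bigl(\Theta(u)-\theta(u)\bigr)+\Theta(u)\bigl(e^{imr(u)}-1\bigr)$, with $\theta$ a cutoff as in Lemma~\ref{lem: Gamma}(2). The first piece contributes $1/z+O(|z|^{-K})$ by Lemma~\ref{lem: Gamma}(2) with $k=0$. The second piece is $O(u^2)$ at $0$, so Lemma~\ref{lem: Gamma}(1) with $M=2$ gives $O(|z|^{-3})$. What remains is the third piece, which involves the oscillatory correction $e^{imr(u)}-1$. We cannot simply apply Lemma~\ref{lem: Gamma}(1) to it, because its size depends on $m$.

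\emph{Main obstacle: the $m$-dependent phase.} I would first Taylor expand the phase, writing $e^{imr}-1=imr(u)+O(m^2u^6)$, with $r(u)=\rho u^3+O(u^5)$. The linear term, by Lemma~\ref{lem: Gamma}(2) with $k=3$ and Lemma~\ref{lem: Gamma}(1), gives $im\bigl(6\rho z^{-4}+O(|z|^{-6})\bigr)=O(|z|^{-3})$, using $|m|\le|z|$. For the higher orders the expansion only pays off where $|m|u^3$ is small, and the tail is the hard region. There I plan to avoid the expansion altogether and instead integrate by parts using the full phase. Write $e^{-zu+imr(u)}=e^{-\zeta(u)}$ with $\zeta'(u)=z-imr'(u)=n-im(1+r'(u))$. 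Because $|r'|=O(u^2)\le\tfrac12$ on the support for small $\delta$, we have $|\zeta'(u)|\gtrsim|z|$ uniformly there. Integrating by parts three times, each time dividing by $\zeta'$, each step gains a factor $|z|^{-1}$. The boundary terms at $u=0$ are the exact Laplace asymptotics $1/z+c_3\,m/z^4+\dots$. Derivatives of $1/\zeta'$ produce factors $m r''/\zeta'^2=O(|m|u/|z|^2)$, which remain bounded by powers of $|z|^{-1}$ times $u$-powers that are harmless.

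Altogether this yields $I_\delta=1/z+O(|z|^{-3})$, which is the claim.
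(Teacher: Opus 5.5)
Your proof is correct, and at the decisive step it takes a different route from the paper.

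Both arguments use the same substitution $u=\log\frac{1+\sin t}{\cos t}$, so that $\Theta(u)=\chi(t(u))/\cosh u$ and $r(u)=t(u)-u=-\tfrac16u^3+O(u^5)$. The paper then Taylor expands the amplitude, $\frac{e^{im(t(u)-u)}}{\cosh u}=1-\tfrac12u^2-\tfrac16 imu^3+O(u^4)+O(|m|u^5)$. It applies Lemma~\ref{lem: Gamma}(1) to the remainders and reads off $\frac1{n-im}-\frac1{(n-im)^3}-\frac{im}{(n-im)^4}$.

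As you point out, that step is only formal. The expansion of $e^{im(t(u)-u)}$ also hides an $O(m^2u^6)$ term, which is not small where $|m|u^3\gtrsim1$. Moreover, Lemma~\ref{lem: Gamma}(1) is proved for a fixed $F$, whereas here the remainder and its derivatives grow with $m$. Absolute bounds give only $|m|/n^6$ or $m^2/n^7$, which are useless for small $n$.

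Your integration by parts against the full phase $\zeta(u)=nu-imt(u)$ is what makes the bound uniform in $(m,n)$. It works because $\zeta'(u)=n-im/\cosh u$, so $|\zeta'|\gtrsim|z|$, and $\zeta^{(k)}=O(|m|)=O(|z|)$ for $k\geqslant2$. The paper's computation gives the explicit lower-order terms quickly; yours gives a genuinely uniform remainder.

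Three remarks on your write-up:

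\begin{itemize}
\item The three integrations by parts, applied to the whole integral $\int_0^\infty\Theta e^{-\zeta}\,\mathrm{d}u$, already prove the proposition on their own. Your preliminary decomposition and Taylor step can therefore be dropped.
\item You should state explicitly that the second boundary term $\frac1z\bigl(\Theta/\zeta'\bigr)'(0)$ vanishes, because $\Theta'(0)=0$ and $\zeta''(0)=-imr''(0)=0$. Otherwise that step only yields $O(|z|^{-2})$.
\item Smallness of $\delta$ is not needed, since $1/\cosh u=\cos t(u)\geqslant\cos2\delta>0$ on the support.
\end{itemize}
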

    \begin{proof}By applying the change of variable \[u=\log\left(\frac{1+\sin t}{\cos t}\right),\]
    inverted by $t(u)$, we get, for a suitable $\delta'=t(2\delta)>0$ and $\theta(u)=\chi(t(u))$,
    \begin{align*}
    I_\delta(m,n)&=\int_{0}^{2\delta}\chi(t) e^{imt} e^{-n\log\big(\frac{1+\sin t}{\cos t} \big)}\operatorname{d}\!t\\
        &=\int_{0}^{\delta'} \theta(u)e^{imt(u)} e^{-nu}\frac{\operatorname{d}\!u}{\cosh u}=\int_{0}^{\delta'} \theta(u)\frac{e^{im(t(u)-u)}}{\cosh u} e^{(im-n)u}\operatorname{d}\!u.
    \end{align*}
    
    By direct computation, one can observe that 
    \begin{equation}
        \label{eq: Taylor phi} \psi(t)=\log\left(\frac {1+\sin t}{\cos t}\right)=t+\frac 16 t^3+ O (t^5).
    \end{equation}
    Furthermore, since $\psi(t)$ is odd, its inverse $t(u)$ as to be odd. Hence, the Taylor polynomial of $t(u)$ as to be of the form $t(u)=c_1 u+c_3 u^3 + O (u^5)$.
    Substituting this expansion  in~\eqref{eq: Taylor phi}, we get
    \begin{align*}
        u=\psi(t(u))&=c_1 u+\left( \frac 16 c_1^3+c_3\right) u^3+ O (u^5)\qquad\Longrightarrow\qquad t(u)-u=-\frac 16 u^3 + O (u^5).
    \end{align*}
    Now, by using the Taylor expansions of the exponential and of $1/\cosh$,
    we get
    \begin{align*}
        \frac{e^{im(t(u)-u)}}{\cosh u}
        &=\left(1-\frac 16 im u^3 + O (|m|u^5)\right)\left(1-\frac 12 u^2+ O (u^4)\right)\\
        &=1-\frac 12 u^2-\frac 16 im u^3+ O (u^4)+ O (|m|u^5).
    \end{align*}  
    By observing that $\theta$ is compactly supported and using Lemma~\ref{lem: Gamma}, we conclude that
        \begin{align*}
            I_\delta(m,n)=&\int_{0}^{\infty} \left(1-\frac 12 u^2-\frac 16 im u^3+ O (u^4)+ O (|m|u^5)\right) \theta(u)e^{(im-n)u}\operatorname{d}\!u\\
            =&\frac 1{n-im}-\frac 1{(n-im)^3}-im \frac 1{(n-im)^4}+ O \left(\frac 1{|m^2+n^2|^{\frac 52}}\right)+ O \left(\frac {|m|}{|m^2+n^2|^3}\right)\\
            =&\frac 1{n-im}-\frac {n}{(n-im)^4}+ O \left(\frac 1{|m^2+n^2|^{\frac 52}}\right)=\frac 1{n-im}+ O \left(\frac 1{|m^2+n^2|^{\frac 32}}\right).
        \end{align*}
        \end{proof}
    \subsection{The case $m=0$}
    We start by recalling that $\varphi(\pi-t)=-\varphi(t)$, so that, when $m=0$, we have
    \[ C(in)=\frac 1{2\pi}\int_0^\pi \varphi(t)^n\operatorname{d}\!t=\begin{dcases*}
    0,&\text{if }$n\in 2\mathbb Z+1$,\\
    \frac 1\pi\int_0^{\frac \pi 2}\Big(\frac{\cos t} {1+\sin t} \Big)^n\operatorname{d}\!t,&\text{if }$n\in 2\mathbb Z$.
    \end{dcases*}\]
    The integral that we have to estimate can be done exactly as in the previous case, using that $(\cosh u)^{-1}=1+O(u^2)$
    and getting
    \begin{align*}
        \frac 1\pi\int_0^{\frac \pi 2}\Big(\frac{\cos t} {1+\sin t} \Big)^n\operatorname{d}\!t=\frac 1\pi\int_0^{+\infty}\frac {e^{-nu}}{\cosh u}\operatorname{d}\!u=\frac 1\pi\int_0^{+\infty}e^{-nu}(1+O(u^2))\operatorname{d}\!u=\frac 1{\pi n}+O\Big(\frac 1{n^3}\Big),
    \end{align*}
    as we wanted to prove.
    \bibliographystyle{amsalpha}
    \bibliography{DHFbib}

\providecommand{\bysame}{\leavevmode\hbox to3em{\hrulefill}\thinspace}
\providecommand{\MR}{\relax\ifhmode\unskip\space\fi MR }
\providecommand{\MRhref}[2]{%
  \href{http://www.ams.org/mathscinet-getitem?mr=#1}{#2}
}
\providecommand{\href}[2]{#2}
\begin{thebibliography}{BRCG25}

\bibitem[AV22]{AV}
D.~Alpay and D.~Volok, \emph{Discrete analytic {S}chur functions}, Proc. Amer.
  Math. Soc. \textbf{150} (2022), no.~5, 2145--2152.

\bibitem[Bea73]{beals}
R.~Beals, \emph{Advanced mathematical analysis. {P}eriodic functions and
  distributions, complex analysis, {L}aplace transform and applications},
  Graduate Texts in Mathematics, vol. No. 12, Springer-Verlag, New
  York-Heidelberg, 1973.

\bibitem[BG16]{BG16}
A.~I. Bobenko and F.~G\"unther, \emph{Discrete complex analysis on planar
  quad-graphs}, Advances in discrete differential geometry, Springer, [Berlin],
  2016, pp.~57--132.

\bibitem[BLMS22]{Malinnikova}
L.~Buhovsky, A.~Logunov, E.~Malinnikova, and M.~Sodin, \emph{A discrete
  harmonic function bounded on a large portion of {$\Bbb Z^2$} is constant},
  Duke Math. J. \textbf{171} (2022), no.~6, 1349--1378.

\bibitem[BnK19]{BK}
R.~Ba\~nuelos and M.~Kwa\'snicki, \emph{On the {$\ell^p$}-norm of the discrete
  {H}ilbert transform}, Duke Math. J. \textbf{168} (2019), no.~3, 471--504.

\bibitem[Bol68]{Bol68}
J.~C. Bolen, \emph{A reproducing kernel function and convergence properties for
  discrete analytic functions}, Ph.D. thesis, Texas Christian University, 1968.

\bibitem[BRCG25]{Bou}
A.~Bou-Rabee, W.~Cooperman, and S.~Ganguly, \emph{Unique continuation on planar
  graphs}, Discrete Anal. \textbf{2025} (2025), 12.

\bibitem[Che16]{Che16}
D.~Chelkak, \emph{Robust discrete complex analysis: a toolbox}, Ann. Probab.
  \textbf{44} (2016), no.~1, 628--683.

\bibitem[CLR23]{CLR23}
D.~Chelkak, B.~Laslier, and M.~Russkikh, \emph{Dimer model and holomorphic
  functions on t-embeddings of planar graphs}, Proc. Lond. Math. Soc. (3)
  \textbf{126} (2023), no.~5, 1656--1739.

\bibitem[CS11]{CS11}
D.~Chelkak and S.~Smirnov, \emph{Discrete complex analysis on isoradial
  graphs}, Adv. Math. \textbf{228} (2011), no.~3, 1590--1630.

\bibitem[CS12]{CS12}
\bysame, \emph{Universality in the 2{D} {I}sing model and conformal invariance
  of fermionic observables}, Invent. Math. \textbf{189} (2012), no.~3,
  515--580.

\bibitem[DM71]{DM71}
C.~R. Deeter and C.~W. Mastin, \emph{The discrete analog of a minimum problem
  in conformal mapping}, Indiana Univ. Math. J. \textbf{20} (1970/71),
  355--367.

\bibitem[DP68]{DP68}
R.~J. Duffin and Elmor~L. Peterson, \emph{The discrete analogue of a class of
  entire functions}, J. Math. Anal. Appl. \textbf{21} (1968), 619--642.

\bibitem[Duf56]{D}
R.~J. Duffin, \emph{Basic properties of discrete analytic functions}, Duke
  Math. J. \textbf{23} (1956), 335--363.

\bibitem[Duf68]{Duf68}
\bysame, \emph{Potential theory on a rhombic lattice}, J. Combinatorial Theory
  \textbf{5} (1968), 258--272.

\bibitem[Fer44]{F}
J.~Ferrand, \emph{Fonctions pr\'eharmoniques et fonctions pr\'eholomorphes},
  Bull. Sci. Math. (2) \textbf{68} (1944), 152--180.

\bibitem[H\"03]{hormander1983analysis}
Lars H\"ormander, \emph{The analysis of linear partial differential operators.
  {I}}, Classics in Mathematics, Springer-Verlag, Berlin, 2003, Distribution
  theory and Fourier analysis, Reprint of the second (1990) edition.

\bibitem[HMW73]{HMW}
R.~Hunt, B.~Muckenhoupt, and R.~Wheeden, \emph{Weighted norm inequalities for
  the conjugate function and {H}ilbert transform}, Trans. Amer. Math. Soc.
  \textbf{176} (1973), 227--251.

\bibitem[Isa41]{Isa41}
R.~P. Isaacs, \emph{A finite difference function theory}, Univ. Nac.
  Tucum{\'a}n. Revista A \textbf{2} (1941), 177--201.

\bibitem[Isa52]{Isa52}
\bysame, \emph{Monodiffric functions}, Construction and applications of
  conformal maps. {P}roceedings of a symposium, National Bureau of Standards
  Applied Mathematics Series, vol. No. 18, U.S. Govt. Printing Office,
  Washington, DC, 1952, pp.~257--266.

\bibitem[Ken02]{Ken02}
R.~Kenyon, \emph{The {L}aplacian and {D}irac operators on critical planar
  graphs}, Invent. Math. \textbf{150} (2002), no.~2, 409--439.

\bibitem[Lov04]{Lov04}
L.~Lov{\'a}sz, \emph{Discrete analytic functions: an exposition}, Surveys in
  Differential Geometry \textbf{9} (2004), no.~1, 241--273.

\bibitem[Mer01]{Mer01}
C.~Mercat, \emph{Discrete {R}iemann surfaces and the {I}sing model}, Comm.
  Math. Phys. \textbf{218} (2001), no.~1, 177--216.

\bibitem[Mer08]{Mer08}
\bysame, \emph{Discrete complex structure on surfel surfaces}, Discrete
  geometry for computer imagery, Lecture Notes in Comput. Sci., vol. 4992,
  Springer, Berlin, 2008, pp.~153--164.

\bibitem[MM26]{MM}
A.~Monguzzi and M.~Monti, \emph{On discrete holomorphic {P}aley--{W}iener
  spaces and sampling on the square lattice}, Trans. Amer. Math. Soc.
  \textbf{379} (2026), no.~7, 5237--5259.

\bibitem[Mug80]{Mug80}
D.~H. Mugler, \emph{The discrete {P}aley-{W}iener theorem}, J. Math. Anal.
  Appl. \textbf{75} (1980), no.~1, 172--179.

\bibitem[PW87]{PW}
R.~E. A.~C. Paley and N.~Wiener, \emph{Fourier transforms in the complex
  domain}, American Mathematical Society Colloquium Publications, vol.~19,
  American Mathematical Society, Providence, RI, 1987, Reprint of the 1934
  original.

\bibitem[RT10]{distributionsRT}
M.~Ruzhansky and V.~Turunen, \emph{Pseudo-differential operators and
  symmetries, background analysis and advanced topics}, Pseudo-Differential
  Operators, Birkhäuser Basel, 2010.

\bibitem[Sko13]{Sko13}
M.~Skopenkov, \emph{The boundary value problem for discrete analytic
  functions}, Adv. Math. \textbf{240} (2013), 61--87.

\bibitem[Smi10]{Smi10}
S.~Smirnov, \emph{Discrete complex analysis and probability}, Proceedings of
  the International Congress of Mathematicians. Volume I (New Delhi), Hindustan
  Book Agency, 2010, pp.~595--621.

\bibitem[ZD77]{ZD}
D.~Zeilberger and H.~Dym, \emph{Further properties of discrete analytic
  functions}, J. Math. Anal. Appl. \textbf{58} (1977), no.~2, 405--418.

\bibitem[Zei77a]{Zei77a}
D.~Zeilberger, \emph{A discrete analogue of the {P}aley-{W}iener theorem for
  bounded analytic functions in a half plane}, J. Austral. Math. Soc. Ser. A
  \textbf{23} (1977), no.~3, 376--378.

\bibitem[Zei77b]{Zei77b}
\bysame, \emph{Discrete analytic functions of exponential growth}, Trans. Amer.
  Math. Soc. \textbf{226} (1977), 181--189.

\bibitem[Zei77c]{Zei77c}
\bysame, \emph{A new approach to the theory of discrete analytic functions}, J.
  Math. Anal. Appl. \textbf{57} (1977), no.~2, 350--367.

\end{thebibliography}
    \end{document}